\documentclass[11pt]{article}
\usepackage[top=4cm,left=2.8cm,right=2.8cm,bottom=3.5cm]{geometry}
\usepackage{amsmath,amsthm}
\usepackage{amsfonts}
\usepackage{amssymb}
\usepackage{graphicx}
\usepackage{subcaption}
\usepackage{float}
\usepackage{relsize,exscale}
\usepackage{rotating}
\usepackage{psfrag}
\usepackage{color}
\usepackage{soul}
\usepackage{cite}
\usepackage{tikz}
\usetikzlibrary{er,positioning}
\usepackage{graphics}
\usepackage{graphicx}
\usepackage{algpseudocode,algorithm,algorithmicx}
\usepackage{xspace}
\usepackage{subcaption}
\usepackage{setspace}
\usepackage{verbatim}
\usepackage{fullwidth}
\usepackage{float}
\usepackage{listings}
\usepackage{lipsum}
\usepackage{framed}
\usepackage{caption}
\usepackage{booktabs,multirow,array}
\usepackage{multirow}
\usepackage{multicol}
\usepackage{mathtools}
\mathtoolsset{showonlyrefs,showmanualtags}
\usepackage{hyperref}
 \usepackage{array}
 \usepackage{hvfloat}
\usepackage{colortbl}
\usepackage[title]{appendix}
\usepackage{etoolbox}
\usepackage{bm}

\begin{document}
\newtheorem{thm}{Theorem}[section]
\newtheorem{lemma}{Lemma}[section]
\newtheorem{proposition}{Proposition}[section]
\newtheorem{rem}{Remark}[section]
\newtheorem{definition}{Definition}[section]
\title{\Large Parametric elliptic reconstructions and \emph{a posteriori} error estimates for parabolic partial differential equations with small randomness in a Robin boundary condition}

\author{Nakidi Shravani$^{1}$, Gujji Murali Mohan Reddy$^{1}$,
Amiya Kumar Pani$^{2}$, Stig Larsson$^{3}$\footnote{Corresponding author: stig@chalmers.se}
\\{\small $^{1}$Department of Mathematics,}\\{\small BITS-Pilani Hyderabad Campus,}\\{\small Telangana - 500078, India}
\\{\small $^{2}$Department of Mathematics,}\\{\small BITS-Pilani, KK Birla Goa Campus, NH 17 B, Zuarinagar,}\\{\small Goa - 403726, India}
\\{\small $^{3}$Department of Mathematical Sciences,}
\\{\small Chalmers University of Technology 
and University of Gothenburg,}\\{\small SE--412 96 Gothenburg, Sweden}
}
\date{}
\maketitle
\begin{abstract}
We obtain reliable \emph{a posteriori} residual-based error estimates for parabo\-lic partial differential equations with small randomness in a Robin-type boundary condition. The uncertainty is addressed via a perturbation approach, transforming the problem with small random input data into a sequence of deterministic problems. Finite element approximations, combined with the backward Euler time discretization, are employed for the resulting problems. To ensure optimal spatial accuracy, the elliptic reconstruction framework is suitably adapted to this setting. This is achieved by introducing a parametric elliptic reconstruction operator that unifies the \emph{a posteriori} analysis of deterministic parabolic problems with that of parabolic problems with small uncertainties.  The obtained \emph{a posteriori} error estimator is robust with respect to the parameter that describes the amount of uncertainty, in the sense that the constants appearing in the bounds are independent of the parameter, as well as of the mesh size and the time-step. In addition, numerical experiments are presented to validate the theoretical findings and illustrate the robustness of the proposed estimators.
\end{abstract}
{\bf Keywords.}
Parabolic PDEs, Small random input data, Finite element method, Backward Euler scheme, Perturbation technique, \emph{A posteriori} error estimate. \\

{\bf Mathematics Subject Classification (2010)} MSC 65M15, 65M60
\section{Introduction}\label{sec:1}
Mathematical models of diverse physical phenomena require adequate input data, including source terms, initial and boundary conditions, and coefficients, to provide a comprehensive study. In practice, the input data are prone to uncertainty due to challenges in characterizing the physical system, obtaining precise measurements, and handling complex geometries. Incorporating these uncertainties into the mathematical model yields partial differential equations (PDEs) with random input data. Specifically, in this article, we focus on parabolic PDEs with small random input data in a Robin-type boundary condition. 

Parabolic PDEs containing terms with small uncertainty, or subject to initial or boundary conditions with small uncertainty, can arise, for instance, in Newton's law of cooling  \cite{W99,W01,B11,GA98,D12} or glaciology \cite{J10, PRR04, PRR08}. While numerical methods offer a powerful tool for approximating solutions to these equations, the presence of small random perturbations in the input data adds an additional layer of complexity. Analyzing and quantifying errors arising from these uncertainties, along with discretization errors, requires rigorous techniques such as \emph{a posteriori} error estimation to ensure computational reliability.

The main aim of this study is to develop a rigorous framework for \emph{a posteriori} error analysis for parabolic PDEs with small randomness in a Robin-type boundary condition. For \emph{a posteriori} error estimation of elliptic and parabolic PDEs with random input data, we refer to \cite{BPRR19, CPB19, EGSZ14, EGSZ15, GNP16, G19, NV21, KPSZ23, MP22} and the references therein. In particular, very few articles have dealt with \emph{a posteriori} error estimation of a parabolic PDE with small random input data \cite{G19, SRM25}. 
Guignard \cite{G19} obtained residual-based \emph{a posteriori} error estimates for the backward Euler (BE) time discretization scheme in assessing the accuracy in the $H^1$-norm for physical space, the $L^2$-norm for time, and the $L^2$-norm for stochastic space for a linear parabolic PDE subject to a Robin boundary condition that contains a small uncertainty. Subsequently, the analysis was extended to the second-order Crank-Nicolson scheme in \cite{SRM25}. 

In this article, we adopt a probabilistic framework to model the uncertainty in the input data through random variables or random fields. In addition, the analysis involves a truncated Karhunen--Lo\`eve (KL) expansion \cite{L77,L78,W08,SGR89}, where we approximate a random field $\alpha$, representing the thermal coefficient in the Robin boundary of the problem, using a finite number of random variables. 
To deal with small uncertainty, we exploit the perturbation technique \cite{ZL04} to approximate the stochastic space. Perturbation techniques are easy to implement for small uncertainties, avoiding the curse of dimensionality and the high computational costs associated with other methods. This approach transforms the problem with small random input data into equivalent deterministic problems. Each problem is solved independently using the standard finite element method with piecewise linear elements for spatial discretization and advancing in time with the standard BE scheme.  

\textbf{Main contributions$\colon$} For the parabolic partial differential equations with small randomness in a Robin-type boundary condition, obtaining robust and sharp estimates in the weaker norms like $L^2$-norm in space remains largely an open problem. Addressing this gap is crucial for applications in computational physics, uncertainty quantification, and the development of robust adaptive discretization strategies. In particular, estimates are obtained by using ideas from \cite{MN03, LM06} in deterministic settings along with techniques from \cite{G19, SRM25} in stochastic frameworks. 

The main advancements in the present article are the following:
\begin{itemize}
    \item Novel residual-based and reliable \emph{a posteriori} error estimates are derived for the parabolic PDE with small randomness in a Robin boundary condition. These estimators allow spatial meshes to evolve over time and provide estimates in the $L^2$-norm for physical space, the $L^\infty$-norm for time, and the $L^2$-norm for stochastic space. In addition, the estimators naturally split into spatial, time, and stochastic discretization components. The analysis includes the derivation of first- and second-order estimates with respect to the parameter $\epsilon$, which describes the amount of uncertainty. 
     
    \item In order to obtain optimal order bounds in the spatial direction, parametric elliptic reconstruction operators are introduced and analyzed. We note that the elliptic reconstruction operator \cite{MN03, LM06} in conjunction with the energy technique has been used extensively for several deterministic time-dependent problems \cite{BKM12, KL13, GSR18, GSRJ16}. 
    
    \item The robustness of the resulting \emph{a posteriori} error estimators with respect to $\epsilon$, as well as their effectiveness, is demonstrated through numerical experiments. 
\end{itemize}

The remainder of the article is laid out as follows. In Section \ref{sec:2}, the model problem is introduced along with notation and preliminaries. Section \ref{sec:3} addresses the first order estimates in $\epsilon$ for the 
BE scheme, and $O(\epsilon^2)$ estimates are presented in Section \ref{sec:4}. In addition, the efficiency of the estimator is examined empirically in Section \ref{sec:5}. We conclude the article in Section \ref{sec:6}.
\section{Notation and preliminaries}\label{sec:2}
Let $D$ denote a bounded polygonal or polyhedral domain in $\mathbb{R}^d$, where $d=1, 2, 3$, with a Lipschitz boundary $\partial D$. For a Lebesgue measurable set $\mathcal{X} \subseteq D\subset \mathbb{R}^d$, we write $L^p(\mathcal{X})$, $1 \leq p < \infty$, to denote the space of equivalence classes of measurable functions $\psi$ on $\mathcal{X}$ such that $\int_\mathcal{X} |\psi(\mathbf{x})|^p \ d\mathbf{x} < \infty$. The norm $||\psi||_{L^p(\mathcal{X})}$ in this space is defined as follows:
 \begin{equation*}
 ||\psi||_{L^p(\mathcal{X})} = \left( \int_{\mathcal{X}}|\psi(\mathbf{x})| ^p \ d\mathbf{x} \right)^\frac{1}{p}, ~1 \leq p < \infty. 
 \end{equation*} 
 When $p=\infty$, the space $L^\infty(\mathcal{X})$ is the set of equivalence classes of essentially bounded measurable functions on $\mathcal{X}$, and the associated norm is given by
 \begin{equation*}
 ||\psi||_{L^\infty(\mathcal{X})}= \text{ess~sup}_{\mathbf{x}\in \mathcal{X}} |\psi(\mathbf{x})|.
 \end{equation*}
 For $m \in \mathbb{N}$ and $1\leq p < \infty$, the Sobolev space $W^{m,p}(\mathcal{X})$ of order $m$ 
 on $\mathcal{X}$ is defined as 
 \begin{equation*}
 W^{m,p}(\mathcal{X})=\{\psi \in L^p(\mathcal{X}) : \mathcal{D}^{\Upsilon} \psi \in L^p(\mathcal{X}), ~0 \leq |\Upsilon| \leq m \},
  \end{equation*} 
 and is endowed with the norm
 \begin{eqnarray}
  ||\psi||_{ W^{m,p}(\mathcal{X})} 
 =\left(\underset{0 \leq |\Upsilon| \leq m}{\sum} ||\mathcal{D}^{\Upsilon} \psi||_{L^p(\mathcal{X})}^p  \right)^{\frac{1}{p}} \nonumber, ~ 1 \leq p < \infty.
  \end{eqnarray}
   For $p=2$, $W^{m,p}(\mathcal{X})=H^m(\mathcal{X})$ is a Hilbert space with the inner-product
  \begin{equation*}
   \langle \phi, \psi \rangle_{H^m(\mathcal{X})}= \underset{0 \leq |\Upsilon| \leq m}{\sum} \int_{\mathcal{X}} \mathcal{D}^{\Upsilon}\phi \mathcal{D}^{\Upsilon} \psi \ d\mathbf{x},~~ \phi, \psi \in H^m(\mathcal{X}),
 \end{equation*} 
  and the induced norm \(||\psi||_{H^m(\mathcal{X})} \) is given by
  \begin{eqnarray}
  ||\psi||_{H^m(\mathcal{X})} 
 =\left(\underset{0 \leq |\Upsilon| \leq m}{\sum} ||\mathcal{D}^{\Upsilon} \psi||_{L^2(\mathcal{X})}^2  \right)^{\frac{1}{2}} \nonumber.
  \end{eqnarray}
In particular, when $m = 1, 2$, and \(\mathcal{X} = D\), we denote the 
norm on \(H^m(\mathcal{X})\) by \(||\psi||_{m}\). The corresponding 
semi-norm is denoted by \(|\psi|_{m}\). Also, when $m=0$, and $\mathcal{X}=D$ we omit the subscript on the inner product and its corresponding norm. 
We now define the Bochner space \( L^\infty(0,T; L^2(D)) \), which consists of functions $v(t, \mathbf{x})$ that vary in both space and time and is given by
\[
L^\infty(0,T; L^2(D)) = \left\{ v \colon v(t, \cdot) \in L^2(D) \text{ a.e. } t \in [0,T], \text{ess\,sup}_{t \in [0,T]} \| v(t, \cdot) \| < \infty \right\}.\]
Moreover, the norm on this space is defined by
\begin{align*}
\| v \|_{L^\infty(0,T; L^2(D))} 
 &:= \text{ess\,sup}_{t \in [0,T]} \| v(t, \cdot) \|. 
\end{align*}
Similarly, for a Lebesgue measurable set $\mathcal{X} \subseteq D$, the spaces of square-integrable functions over \( [0, T] \), with values in \( L^2(\mathcal{X}) \) and \( L^2(\gamma) \), respectively, are represented by \( L^2(0,T; L^2(\mathcal{X})) \) and $L^{2}(0,T;L^{2}(\gamma))$, where $\gamma$ denotes the boundary of  $\mathcal{X}$. 
The space $C^{0}([0,T];L^{2}(D))$ consists of all $L^{2}(D)$-valued continuous functions $u$ on $[0,T]$.

\par Now, we introduce probabilistic space-time function spaces. Let $(\Omega,F,P)$ be a complete probability space, where $\Omega$ is the sample
space, $F \subset2^{\Omega}$ is the $\sigma$-algebra of events, and $P$ is a
probability measure. For $1 \leq p < \infty$, $L_P^{p}(\Omega)$ represents the space of $p$-integrable random variables $Y$ on $(\Omega,F,P)$ and is defined as follows:
 \begin{equation*}
 L_P^{p}(\Omega)=\left\{Y(\omega): \int_{\Omega}|Y(\omega)|^p \, dP(\omega)\ < \infty\right\}.
 \end{equation*}
 If $Y \in L^{1}_P(\Omega)$, we denote its expected value by $E[Y]=\int_{\Omega} Y(\omega)\,dP(\omega)$. The space\\
 $L_P^{2}(\Omega;L^{\infty}(0,T;L^2(D)))$ is defined as
\begin{align*}
L_P^{2}(\Omega;L^{\infty}(0,T;L^2(D))) := \left \{ v \colon E\Big[  \text{ess~sup} \| v(t,\cdot,\cdot) \|^2 \Big] < \infty \right \}.
\end{align*} 
Henceforth, we will omit the notation $d\mathbf{x}$ and $ds$ whenever a function is integrated over a physical domain or its boundary, respectively.
\subsection{Problem settings}\label{subsect:2:1}
Consider the following parabolic PDE with random boundary conditions, where the input data is characterized by small uncertainty. Seek a random function $u \colon (0,T) \times D \times \Omega \rightarrow \mathbb{R}$ such that almost surely (a.s.) in $\Omega$ 
\begin{equation}
 \dfrac{\partial u(t,\textbf{x},\omega)}{\partial t}-\nabla \cdot ( k(\textbf{x})\nabla u(t,\textbf{x},\omega)) = f(t,\textbf{x}), \;\; \textbf{x} \in D,~ t \in (0,T), \label{spde}
\end{equation}
subject to Dirichlet, random Robin boundary conditions, and initial conditions
\begin{align}
  u(t,\textbf{x},\omega)&= 0, ~~\textbf{x} \in \Gamma_{\mathrm{D}}, ~t \in (0,T), \nonumber\\
   (k(\textbf{x}) \nabla u(t,\textbf{x},\omega)) \cdot \mathbf{n}
   + \alpha(\textbf{x}, \omega) u(t,\textbf{x},\omega)&= g(t,\textbf{x}),~\textbf{x} \in \Gamma_{\mathrm{R}}, \  t \in (0, T), \label{ibc}\\
 u(0,\textbf{x},\omega) &= u_{\text{init}}(\textbf{x}), ~~~~~\textbf{x} \in D, \nonumber
 \end{align}
 where $\nabla$ is the spatial gradient operator. 
 Here, $\Gamma_{\mathrm{D}}$ and $\Gamma_{\mathrm{R}}$ represent the Dirichlet and Robin boundary components, respectively, with $\Gamma_{\mathrm{D}} \cup \Gamma_{\mathrm{R}} = \partial D$ and $\Gamma_{\mathrm{D}} \cap \Gamma_{\mathrm{R}} = \emptyset$. In addition, the unit outward normal vector in $\Gamma_{\mathrm{R}}$ is denoted by $\mathbf{n}$.

Moreover, the input data for the PDE \eqref{spde}--\eqref{ibc} are subject to the following assumptions:
\begin{itemize}
\item 
The source function 
$f \in L^2(0,T;L^2(D))$, 
the boundary function $g\in L^2(0,T;L^2(\Gamma_{\mathrm{R}}))$ and the initial data $u_{\text{init}} \in L^2(D).$ 
\item The spatially dependent coefficient matrix $k= [{k}_{i,j}(\mathbf{x})]_{1\leq i,j\leq d}$ 
with $k_{i,j} \in L^{\infty}(D)$ satisfies the following uniform ellipticity condition: 
\begin{equation}
\exists ~ {k}_{\mathrm{min}}>0~ \text{such that}~\forall \varsigma \in \mathbb{R}^d,~ {k}(\mathbf{x})\varsigma \cdot \varsigma \geq {k}_{\mathrm{min}}|\varsigma|^2,~ \text{a.e.}~ \text{in}~ D.\label{kmin1} 
\end{equation}
\item The random field $\alpha(\cdot, \omega)$ is a.s. in $L^\infty(\Gamma_{\mathrm{R}})$ and is expressed as an affine combination of a finite number of random variables \cite{DKLS16}:
    \begin{equation}
        \alpha(\mathbf{x}, \mathbf{Y}(\omega)) = \alpha_0(\mathbf{x}) + \epsilon \sum_{j=1}^{L} \alpha_j(\mathbf{x}) Y_j(\omega), \label{alpha}
    \end{equation}
    where $\mathbf{Y} = (Y_1, Y_2, \ldots, Y_L)$ is a random vector with linearly independent random variables having zero mean and unit variance. 
\end{itemize}
We note that given the mean and covariance or two-point correlation of $\alpha$, a KL type expansion is used to obtain \eqref{alpha} \cite{L77,L78,GWZ14, GS03}. 

In light of the Doob–Dynkin Lemma \cite{BC02} and \eqref{alpha}, we rewrite the solution $u$ of \eqref{spde}--\eqref{ibc} as (with a slight abuse of notation)
\begin{align}
u(t,\textbf{x},\omega)=u(t,\textbf{x}, \mathbf{Y}(\omega))=u(t,\textbf{x}, Y_1(\omega),\cdots, Y_L(\omega)).
\end{align}
With  $\Gamma_j=Y_j(\Omega)\subset\mathbb{R}$, the range of the random variable $Y_j$, we set $\Gamma=\prod_{j=1}^{L}\Gamma_j \subset{\mathbb{R}^L}$. Furthermore, exploiting the independence of the random variables $Y_j$, the joint probability density function (PDF) $\rho\colon\Gamma \rightarrow \mathbb{R}^+$ of $\mathbf{Y}$ is defined as $\rho(\mathbf{y})=\prod_{j=1}^{L}\rho_j(y_j)$, $\mathbf{y}\in \Gamma$. Here, $\rho_j$, $j=1, \cdots, L$, denotes the PDF defined on $\Gamma_j$. Subsequently, $(\Omega,\mathcal{F}, P)$ can be substituted with $(\Gamma, B(\Gamma), \rho(\mathbf{y})\,d\mathbf{y})$, where $B(\Gamma)$ refers to the Borel $\sigma$-algebra on $\Gamma$ and $\rho(\mathbf{y})\,d\mathbf{y}$ denotes the probability measure of $\mathbf{Y}$.
Now define 
\begin{equation*}
V=H_{\Gamma_{\mathrm{D}}}^1=\left\{\phi \in H^1(D) : ~\phi=0~ \text{on}~ \Gamma_{\mathrm{D}}\right\}
\end{equation*}
with norm 
	\begin{align}
	||\phi||_{V}:=
	\begin{cases}
	|\phi|_{H^1(D)}=||\nabla \phi||, ~~~~~~~~~~~~~~~~~~~~~~~~~~~ &\text{if}~ \Gamma_{\mathrm{D}} \neq \emptyset, \\
	||\phi||_{H^1(D)} = \sqrt{||\phi||^2 ~+~ ||\nabla \phi||^2,} ~~~ &\text{if}~  \Gamma_{\mathrm{D}}= \emptyset.
	\end{cases}
	\end{align}
\par
 \noindent
 Consequently, the Bochner space \( L^2(0,T; V) \) represents the space of square integrable functions on \( [0, T] \) with values in \( V \). Now introduce the space $L_{\rho}^{2}(\Gamma;L^{2}(0,T;V)$ as
\[
L_{\rho}^{2}(\Gamma;L^{2}(0,T;V)) = \left\{v \colon  \Gamma\rightarrow L^{2}(0,T;V),\ v
\text{~is~strongly~measurable~and~} \|v\|_{L_{\rho}^{2}(\Gamma;L^{2}(0,T;V))}
< \infty\right\},
\]
where
\[
\|v\|^{2}_{ L_{\rho}^{2}(\Gamma;L^{2}(0,T;V))} = \int_{\Gamma} \|v(\cdot, \cdot;\mathbf{y})\|^{2}%
_{L^{2}(0,T;V)} \,\rho(\mathbf{y})\,d\mathbf{y}.
\]
The parametric point-wise variational formulation of problem \eqref{spde}--\eqref{ibc} reads as:
find \\ $u \in L_{\rho}^2(\Gamma;L^2(0,T;V) \cap C^0([0,T];L^2(D)))$  such that
 \begin{align}
 \begin{aligned}
\left\langle\dfrac{\partial u}{\partial t}, \phi \right\rangle 
+ a(u,\phi;\textbf{y}) &= F(\phi) ,\quad&& \text{a.e.} ~ t \in (0,T),~\rho\text{-a.e.}~\textbf{y} \in \Gamma, \;\;\forall \phi \in V,  \label{wfu} \\
~~~~~~~~~~~~~~~u(0,\textbf{x},\textbf{y}) &=u_{\text{init}}(\textbf{x}), && \textbf{x} \in D,~ \rho\text{-a.e.}~ \textbf{y} \in \Gamma,  
\end{aligned}
\end{align}
where
\begin{align}
a(u,\phi;\textbf{y}) &:=\int_{D} (k \nabla u )\cdot \nabla \phi + \int_{\Gamma _{R}} \alpha(\cdot,\textbf{y})~u\phi \label{pbl}, \\
F(\phi) &:= \int_{D} f\phi + \int_{ \Gamma_{\mathrm{R}}}g\phi \;\;\mbox{and}\;\;
\left\langle\dfrac{\partial u}{\partial t}, \phi \right\rangle := \int_{D}\dfrac{\partial u}{\partial t} \phi. \label{plf-1}
\end{align}
Moreover, under the following assumption,  
\begin{equation}
\alpha(\textbf{x},\mathbf{y}) \geq \alpha_{\text{min}}>0~~\text{a.e}.~\textbf{x} \in \Gamma_{\mathrm{R}},~\rho\text{-a.e.} ~\mathbf{y} \in \Gamma, \label{alphamin}
\end{equation}
\eqref{wfu} is well-posed.
In addition, the coercivity of the bilinear form $a(\cdot, \cdot; \mathbf{y})$ is ensured by \eqref{kmin1} and \eqref{alphamin}, i.e., there exists $C_a >0$ such that
\begin{equation}
C_{a}||v||_{V}^2 \leq a(v,v;\mathbf{y}) \;\; \forall v \in V ~\text{and}~ \rho\text{-}\text{a.e.}~ \mathbf{y} \in \Gamma. \label{coer}
\end{equation}
In addition, there exists a constant $\beta>0$ such that
\begin{align}
    a(v,\phi;\mathbf{y}) \leq \beta \|v\|_V\|\phi\|_V\;\;\forall v,\phi \in V ~\text{and}~ \rho\text{-}\text{a.e.}~ \mathbf{y} \in \Gamma. \label{bdd}
\end{align}
\par
Now, we deploy the perturbation technique to deal with small uncertainty in the input data. Under suitable regularity and smallness assumptions \cite{GS03}, the solution $u$ admits the perturbation expansion 
\begin{equation}
u(t,\textbf{x},\mathbf{Y}(\omega))=u_0(t,\textbf{x})+ \epsilon u_1(t,\textbf{x},\mathbf{Y}(\omega))+  \epsilon^2 u_2(t,\textbf{x},\mathbf{Y}(\omega))+\cdots, \label{uexp}
\end{equation}
where $\epsilon$ is the parameter that denotes the size of the uncertainty in \eqref{alpha}. 

\section{First-order approximation in \texorpdfstring{$\epsilon$}{}}\label{sec:3}
We first reduce the problem \eqref{spde}--\eqref{ibc} to deterministic problems using the perturbation method \cite{G19}. In view of \eqref{spde}--\eqref{ibc} and \eqref{alpha}, the problem associated with the first term $u_0$ in the expansion of \eqref{uexp} can be written as: seek 
$u_0 \colon (0, T) \times D \rightarrow \mathbb{R}$ such that
 \begin{align}
  \begin{aligned}
 \dfrac{\partial u_0(t,\textbf{x})}{\partial t}- \nabla \cdot (k(\textbf{x}) \nabla u_0(t,\textbf{x}))&= f(t,\textbf{x}), \quad &&\textbf{x} \in D,~~t \in (0,T), \\
 u_0(t,\textbf{x})&=0, && \textbf{x} \in \Gamma_{\mathrm{D}},~t \in (0,T),\\
 (k(\textbf{x}) \nabla u_0(t,\textbf{x})) \cdot \mathbf{n}
 +\alpha_0(\textbf{x}) u_0(t,\textbf{x})&= g(t,\textbf{x}),&&\textbf{x} \in \Gamma_{\mathrm{R}},~~t \in (0,T), \label{det}  \\
 u_0(0,\textbf{x})&=u_{\text{init}}(\textbf{x}), &&\textbf{x} \in D.
  \end{aligned}
 \end{align}
 The corresponding variational formulation reads as:
 find $u_0 \in L^2(0,T;V) \cap C^0([0,T];L^2(D))$ such that
 \begin{align}
 	\mathop{\mathlarger{\int_{D}}}\dfrac{\partial u_0}{\partial t}\phi + \int_{D}(k \nabla u_0) \cdot \nabla \phi + \int_{\Gamma_{\mathrm{R}}} \alpha_0 u_0 \phi &= \int _{D}f\phi + \int_{\Gamma _{R}} g\phi, \;\;\forall \phi \in V, ~t \in (0,T),\label{wf1}\\
 u_0(0,\textbf{x}) &=u_{\text{init}}(\textbf{x}), ~~\textbf{x}\in D.\nonumber
 \end{align}
We treat the above problem with finite elements in space, and the backward Euler (BE) scheme in the temporal direction. Let $0 = t_0 < t_1 < \cdots < t_N = T$ be a partition of $[0,T]$ with $I_n := [t_{n-1}, t_n]$,
the $n$-th subinterval of length $\tau_n:= t_n - t_{n-1}$. 

Given a family of shape-regular, conforming triangulations $\mathcal{T}_n,~ n=0, 1, 2, \cdots, N,$ 
of the domain $D$, let $h_n$ be the local mesh-size function defined as 
\begin{equation*}
h_n(\mathbf{x}) = \text{diam}(K),\;\; K \in \mathcal{T}_n\;\; \text{and}\;\; \mathbf{x} \in K.
\end{equation*}
Let $\Xi_n$ and  $\mathcal{E}_n$ denote the set of all sides (edges) and internal sides of the triangulation $\mathcal{T}_n$, respectively. Furthermore, let $\Sigma_n$ be the union of all sides, i.e., $\bigcup_{E \in \Xi_n }E$.
For each $\mathcal{T}_n$, we associate continuous, piecewise linear polynomial spaces  defined as 
\begin{equation*}
\tilde{V}^n=\left\{v \in C^0(\bar{D}) \colon v|_K \in \mathbb{P}_1 \;\forall K \in \mathcal{T}_n\right\},~~\text{and}~~V^n=\tilde{V}^n\cap V. 
\end{equation*}
where $\mathbb{P}_1$ is the space of polynomials of degree $\leq 1$. We note that the analysis in this article also holds for $C^0$-piecewise higher-degree polynomials. 

 We define $\hat{h}_n:=\max(h_n, h_{n-1})$, for two successive compatible \cite{LM06} triangulations $\mathcal{T}_{n-1}$ and $\mathcal{T}_{n}$. 
In addition, we use  sets $\hat{\Sigma}_n:=\Sigma_n \cap\Sigma_{n-1}$ and $\check{\Sigma}_n:={\Sigma}_n\cup{\Sigma}_{n-1}.$ 
For $v_n \in V^n, n \in [0,N]$, we set
\begin{align}
{{\bar{\partial}}} v_n := \frac{v_n - v_{n-1}}{\tau_n}\;\;\; \text{and}\;\;\;
   \widehat{\partial} v_n :=\frac{v_n-P_0^nv_{n-1}}{\tau_n}, \;\;\; n = 1,2,\cdots,N.\nonumber
\end{align}

We now state the BE scheme as follows: set $u^0_{0,h}:=I^0u_{\text{init}}$, find $u_{0,h}^n \in V^n$, $n = 1,2,\cdots, N,$  such that
\begin{equation}
	\int_{D} {\bar{\partial} u_{0,h}^n} \phi_n+ \int_{D} (k \nabla u_{0,h}^n) \cdot \nabla \phi_n + \int_{ \Gamma_{\mathrm{R}}} \alpha_0 u_{0,h}^n \phi_n  = \int_{D} f^{n}\phi_n  + \int_{ \Gamma_{\mathrm{R}}} g^{n}\phi_n 	~ \forall \phi_n \in V^n,\label{BE}	
\end{equation}
where $f^n=f(\cdot,t_n)$, $g^n=g(\cdot,t_n)$ and $I^0$ is a suitable interpolation or projection operator from $V$ or $L_2(D)$ onto $V^0$. 
\par
The $L_2$ projection operator $P_0^n \colon L_2(\Omega) \rightarrow {\tilde{V}}^n$  is defined as
\begin{equation}
    \langle P_0^nv, \phi_n\rangle =\langle v, \phi_n \rangle~~ \forall\phi_n \in {\tilde{V}}^n.
\end{equation}
Further, we define the $L_2$ projection operator on $\Gamma_{\mathrm{R}}$ in the sense of trace,  
$P_{\frac{1}{2}}^n \colon L_2(\Gamma_{\mathrm{R}}) \rightarrow {\tilde{{V}}}^n$ as 
\begin{equation}
    \int_{\Gamma_{\mathrm{R}}} P_{\frac{1}{2}}^nv \phi_n =\int_{\Gamma_{\mathrm{R}}}  v \phi_n \;\; \forall\phi_n \in {\tilde{V}}^n.
\end{equation}

The discrete elliptic operator $A^n\colon \tilde{V}^n \rightarrow \tilde{V}^n$ is defined by 
\begin{align}
    \langle A^n v , \phi_n\rangle= a(v, \phi_n; \cdot) \;\; \forall \phi_n \in \tilde{V}^n~ \text{and}~\rho\text{-a.e.~in~} \Gamma.
\end{align}
Note that $a(\cdot, \cdot; \cdot)$ denotes $a(\cdot, \cdot; \mathbf{y})$ with implicit dependence on $\mathbf{y}$.  
\par
To build residuals, we use a particular representation of the bilinear form $a(\cdot, \cdot; \cdot)$. For $v \in V^n$ and $\rho$-a.e. in $\Gamma$, we express
\begin{align}
    a(v, \phi; \cdot)= \langle A_{\textit{el}}v, \phi \rangle+\langle J[v], \phi \rangle_{{\Sigma}_{n}} \;\; \forall \phi \in V, \label{RepErec}
\end{align}
where 
\begin{align}
\langle A_{\text{el}}v, \phi \rangle= \sum_{K \in \mathcal{T}_n}\langle -\nabla \cdot (k \nabla v), \phi \rangle_K ~~ \forall \phi \in V
\end{align}
and
\begin{equation}
J[v]|_E :=
\begin{cases} \label{jump0}
\frac{1}{2}[(k \nabla v) \cdot \mathbf{n}_E]_{\mathbf{n}_E},~~~~~~~~ &\text{if}~~~ E \subset D,\\
  \alpha v+ (k \nabla v)\cdot \mathbf{n}_E, ~~~~~~ &\text{if}~~~E \subset \Gamma_{\mathrm{R}},  \\
 0, ~~~~~~~~~~~~~~~~~~~~~~~~~~~~~~~~~~ &\text{if}~~~ E \subset \Gamma_{\mathrm{D}}. 
\end{cases}
\end{equation}
Here $[v]_{\mathbf{n}_E}$, the spatial jump of the field $v$ across an interior side $E \in \mathcal{E}_n$, is given by
\begin{align}
[v]_{\bm{n}_E}(\mathbf{x}):=\lim_{t \rightarrow 0^+}(v(\mathbf{x}+ t {\mathbf{n}_E})(\mathbf{x})- v(\mathbf{x}- t {\mathbf{n}_E})(\mathbf{x})),
\end{align}
where ${\mathbf{n}_E}(\mathbf{x})$ is a normal vector to $E$ at the point $\mathbf{x}$.

Now, we introduce a parametric elliptic reconstruction operator.
 \begin{definition} The parametric elliptic reconstruction operator $\mathcal{R}^n\colon V^n \rightarrow V$ is defined as 
 \begin{align}
     a(\mathcal{R}^nv, \phi; \cdot)=\langle A^nv, \phi\rangle~~\forall \phi \in V~ \text{and}~\rho\text{-a.e.~in}~ \Gamma. \label{ERec}
 \end{align}
 \end{definition}
Further, we obtain the following orthogonality property by virtue of the above definition
 \begin{align}
     a(v-\mathcal{R}^nv,\phi_n; \cdot)=0~~ \forall \phi_n \in V^n ~ \text{and}~\rho\text{-a.e.~in}~ \Gamma.
 \end{align}
Associated with the nodal values $u_{0,h}^{n-1}$ and $u_{0,h}^n$, $n = 1, 2, \cdots, N$, we define a continuous linear interpolant in time $u_{0,h \tau}$ by
\begin{align}
u_{0, h \tau}(t) :=  l_{n-1}(t) u_{0,h}^{n-1}+l_{n}(t) u_{0,h}^n, ~~ t \in I_n, \label{linapx1} 
\end{align}
where
\begin{align}
l_{n-1}(t):=\frac{t_n - t}{\tau_n}, \;\;\;\;\;\;\;~~ ~~
l_{n}(t):= \frac{t-t_{n-1}}{\tau_n}. \;\;\;\;\;\;\;\label{timebasis}
\end{align}
Similarly, the linear interpolant $u_{\mathcal{R},0}(t)$ associated with the reconstructed nodal values $\mathcal{R}^{n-1}u^{n-1}_{0,h}$ and $\mathcal{R}^nu^{n}_{0,h}$, $n = 1, 2, \cdots, N$ is given by
\begin{align}
  u_{\mathcal{R},0}(t):=l_{n-1}(t)\mathcal{R}^{n-1}u_{0,h}^{n-1}+l_{n}(t)\mathcal{R}^{n}u_{0,h}^{n}, ~~ t \in I_n. \label{linapxw0} 
\end{align}

\subsection*{{\it{A posteriori}} error estimators}
We begin by introducing the residuals.  
For the problem \eqref{det}, the element residual $\mathfrak{R}_0^n$ and jump residual $\mathfrak{J}_0^n$ are defined  as
\begin{equation}
\mathfrak{R}_0^n|_K:=\mathfrak{f}^n-{\widehat{\partial} u_{0,h}^n}+ \nabla \cdot (k \nabla u_{0,h}^n)\label{res1}
\end{equation}
and
\begin{equation}
\mathfrak{J}_0^n|_E :=
\begin{cases} \label{jump1}
\frac{1}{2}[(k \nabla u_{0,h}^n)\cdot \mathbf{n}_E]_{\mathbf{n}_E},~~~~~~~~ &\text{if}~~~ E \subset D,\\
 \mathfrak{g}^n- \alpha_0 u_{0,h}^n- (k \nabla u_{0, h}^n)\cdot \mathbf{n}_E, ~~~~~~ &\text{if}~~~E \subset \Gamma_{\mathrm{R}},  \\
 0, ~~~~~~~~~~~~~~~~~~~~~~~~~~~~~~~~~~ &\text{if}~~~ E \subset \Gamma_{\mathrm{D}}, 
\end{cases}
\end{equation}
where
\begin{align}
\mathfrak{f}^n:=P_0^nf^n, \;\;\;\;\;\; \mathfrak{g}^n:=P_{\frac{1}{2}}^ng^n.
\end{align}
Now, we develop preliminary results to estimate the error $e_0=u_{0,h\tau}-u$. Set $e_0=\varrho_0-\varepsilon_0$, where $\varrho_0={u}_{\mathcal{R},0}-u$, and $\varepsilon_0={u}_{\mathcal{R},0}-u_{0,h \tau}$. Also, note that in the forthcoming analysis, all equations are valid for a.e. $t$ and a.s. in the sample space $\Omega$, without explicit mention.

In the following, we denote by superscripts R, S, T, St, D, DM,  the elliptic reconstruction, spatial, temporal, stochastic truncation, data approximation, data and mesh change errors, respectively. 

We begin with an estimate of the parametric elliptic reconstruction error. 
\begin{lemma}
For any $v \in V^n$, we have the following estimate of the elliptic reconstruction error.
\begin{align}
    \|\mathcal{R}^nu_{0,h}^n-u_{0,h}^n\| \leq 
 \zeta_{1,n}^\mathrm{R},
\end{align}
where 
\begin{align}
\zeta_{1,n}^\mathrm{R}:=C_4\left (\|h_n^2\mathfrak{R}_0^n\|+\|h_n^{\frac{3}{2}}J
_0^n\|\right), \label{Elp1}
\end{align}
where the constant depends on the interpolation constant and the domain.
\end{lemma}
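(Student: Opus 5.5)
We prove the estimate with the standard Aubin--Nitsche duality argument for elliptic reconstructions (as in \cite{MN03, LM06}), applied to the bilinear form $a(\cdot,\cdot;\cdot)$ that defines $\mathcal{R}^n$. We work with the deterministic part, i.e.\ $a$ with coefficient $\alpha_0$ on $\Gamma_{\mathrm{R}}$, which is the form used in \eqref{BE}. Set $w:=\mathcal{R}^nu_{0,h}^n-u_{0,h}^n\in V$. Let $z\in V$ solve the dual problem $a(\phi,z;\cdot)=\langle w,\phi\rangle$ for all $\phi\in V$. By the symmetry of $a$, coercivity \eqref{coer} and continuity \eqref{bdd}, this problem is well posed. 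We assume the elliptic regularity bound $\|z\|_2\le C_{\mathrm{reg}}\|w\|$; this is where the dependence on the domain enters. Then
\begin{align}
\|w\|^2=a(w,z;\cdot)=a(w,z-I^nz;\cdot),
\end{align}
where $I^n$ is the Lagrange or Scott--Zhang interpolant onto $V^n$. The second equality uses the Galerkin orthogonality $a(\mathcal{R}^nv-v,\phi_n;\cdot)=0$ for all $\phi_n\in V^n$, which follows from \eqref{ERec}.

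Next we express $a(w,\psi;\cdot)$ with $\psi:=z-I^nz$ in residual form. By \eqref{ERec}, $a(\mathcal{R}^nu_{0,h}^n,\psi;\cdot)=\langle A^nu_{0,h}^n,\psi\rangle$. The discrete equation \eqref{BE}, written with the projections $P_0^n$ and $P_{\frac12}^n$, identifies $A^nu_{0,h}^n=\mathfrak{f}^n-\widehat{\partial}u_{0,h}^n+(\text{boundary term from }\mathfrak{g}^n)$ in $\tilde V^n$. For $a(u_{0,h}^n,\psi;\cdot)$ we use the elementwise representation \eqref{RepErec}, obtained by integrating by parts on each $K\in\mathcal{T}_n$. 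Subtracting the two expressions should give
\begin{align}
a(w,\psi;\cdot)=\sum_{K\in\mathcal{T}_n}\langle \mathfrak{R}_0^n,\psi\rangle_K+\sum_{E\in\Xi_n}\langle \mathfrak{J}_0^n,\psi\rangle_E .
\end{align}
Here the Robin-side term comes out as $\mathfrak{g}^n-\alpha_0u_{0,h}^n-(k\nabla u_{0,h}^n)\cdot\mathbf{n}_E$, and the interior terms are the flux jumps in \eqref{jump1}.

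This identification is the step I expect to be the main obstacle. The difficulty is that $A^nu_{0,h}^n$ is a function in $\tilde V^n$ and is tested against $\psi\notin V^n$, whereas \eqref{BE} only gives information when testing with $V^n$. I would first try to write $\langle A^nu_{0,h}^n,\psi\rangle$ as $\langle P_0^n(\cdot),\psi\rangle$ and use the fact that $\mathfrak{f}^n$, $\widehat\partial u_{0,h}^n$ and $\mathfrak g^n$ are themselves elements of $\tilde V^n$. If the boundary datum does not fit cleanly, I would include the discrepancy between the boundary projection and the $L^2(D)$ representation inside the jump residual on $\Gamma_{\mathrm{R}}$. This is consistent with the way the estimator is defined through $\mathfrak{f}^n$ and $\mathfrak{g}^n$.

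Finally, we apply Cauchy--Schwarz elementwise together with the standard interpolation estimates $\|z-I^nz\|_K\le C h_K^2|z|_{2,\tilde K}$ and the trace bound $\|z-I^nz\|_E\le C h_E^{3/2}|z|_{2,\tilde K_E}$. Shape regularity gives finite overlap of the patches, so
\begin{align}
\|w\|^2\le C\left(\|h_n^2\mathfrak{R}_0^n\|+\|h_n^{3/2}\mathfrak{J}_0^n\|_{\Sigma_n}\right)\|z\|_2\le C\,C_{\mathrm{reg}}\left(\|h_n^2\mathfrak{R}_0^n\|+\|h_n^{3/2}\mathfrak{J}_0^n\|_{\Sigma_n}\right)\|w\|.
\end{align}
Dividing by $\|w\|$ gives the stated bound with $C_4$ depending only on the interpolation constants and $C_{\mathrm{reg}}$. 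Since the argument uses only coercivity and boundedness of $a$, $C_4$ is independent of the uncertainty parameter $\epsilon$.
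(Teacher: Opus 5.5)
\textbf{Gap: the residual identity fails off $V^n$, and with $\mathfrak{J}^n_0$ the lemma is false.} Your route is the paper's, whose proof simply defers to the Aubin--Nitsche argument of \cite[Lemma~2.3]{LM06}. The step you flag as the main obstacle is a genuine gap, and your fallback cannot close it. In \cite{LM06} the boundary condition is Dirichlet, so the scheme gives $A^nU^n=P_0^nf^n-\widehat{\partial}U^n$ as functions, and the element residual appears for free. Here \eqref{BE} only yields, for $\phi_n\in V^n$,
\[
\langle A^nu^n_{0,h},\phi_n\rangle=\langle\mathfrak{f}^n-\widehat{\partial}u^n_{0,h},\phi_n\rangle+\int_{\Gamma_{\mathrm{R}}}g^n\phi_n .
\]
If, as in the paper, $A^n$ is built from $a(\cdot,\cdot;\mathbf{y})$ rather than the $\alpha_0$-form you switched to, add $\int_{\Gamma_{\mathrm{R}}}(\alpha-\alpha_0)u^n_{0,h}\phi_n$. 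Thus $A^nu^n_{0,h}=\mathfrak{f}^n-\widehat{\partial}u^n_{0,h}+G^n$, where $G^n$ is the discrete $L^2(D)$-representative of $\phi_n\mapsto\int_{\Gamma_{\mathrm{R}}}g^n\phi_n$. This is a boundary-layer function with $\|G^n\|\sim h^{-1/2}\|g^n\|_{L^2(\Gamma_{\mathrm{R}})}$. For $\psi=z-I^nz\notin V^n$ the two functionals agree on $I^nz$, so the discrepancy is
\[
\langle G^n,\psi\rangle-\int_{\Gamma_{\mathrm{R}}}\mathfrak{g}^n\psi=\langle G^n,z\rangle-\int_{\Gamma_{\mathrm{R}}}\mathfrak{g}^nz ,
\]
a pure data term of order $h^{3/2}\|\mathfrak{g}^n\|_{L^2(\Gamma_{\mathrm{R}})}|z|_2$. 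It does not see how well $u^n_{0,h}$ satisfies the Robin condition, so absorbing it into the Robin jump changes the estimator rather than proving the bound. What your duality argument honestly gives, with no identification at all, is the \cite{LM06}-type bound
\[
\|\mathcal{R}^nv-v\|\le C\bigl(\|h_n^2(A^nv-A_{\mathrm{el}}v)\|+\|h_n^{3/2}J[v]\|_{\Sigma_n}\bigr),
\]
with $J[v]$ from \eqref{jump0}. For $v=u^n_{0,h}$ this does not reduce to $\zeta^{\mathrm{R}}_{1,n}$.

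\textbf{Counterexample and remedy.} The gap cannot be closed. With $\mathfrak{J}^n_0$ from \eqref{jump1} (your reading of the undefined $J_0^n$, and the one used in the analogous lemma of Section~\ref{sec:4}), the lemma is false. Take $\Gamma_{\mathrm{D}}=\emptyset$, $k=I$, $\alpha_0=1$, $f=0$, $g=1$, $u_{\mathrm{init}}=1$, with any admissible $\epsilon$. Then $u^n_{0,h}=1$ for every $n$ on any meshes, so $\mathfrak{R}^n_0=0$, $\mathfrak{J}^n_0=0$ and $\zeta^{\mathrm{R}}_{1,n}=0$. Yet $a(1,\phi;\cdot)=\int_{\partial D}\alpha\phi$ is not of the form $\langle F,\phi\rangle$ with $F\in L^2(D)$, so \eqref{ERec} forces $\mathcal{R}^n1\neq1$. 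A bubble-function argument even gives $\|\mathcal{R}^n1-1\|\gtrsim\|h_n^2A^n1\|\sim h^{3/2}$ on quasi-uniform meshes. The same flaw sits in \eqref{DF1}, which asserts for all $\phi\in V$ an identity that holds only on $V^n$.

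The remedy is to put the data into the reconstruction: define $\mathcal{R}^nu^n_{0,h}\in V$ by
\[
a(\mathcal{R}^nu^n_{0,h},\phi;\cdot)=\langle\mathfrak{f}^n-\widehat{\partial}u^n_{0,h},\phi\rangle+\int_{\Gamma_{\mathrm{R}}}\bigl(\mathfrak{g}^n+(\alpha-\alpha_0)u^n_{0,h}\bigr)\phi\qquad\forall\phi\in V .
\]
With your $\alpha_0$-form, simply drop the $(\alpha-\alpha_0)$ term. Galerkin orthogonality on $V^n$ still follows from \eqref{BE}. Elementwise integration by parts now gives your residual identity for every $\phi\in V$, with exactly $\mathfrak{R}^n_0$ and $\mathfrak{J}^n_0$: the boundary terms $(\alpha-\alpha_0)u^n_{0,h}$ and $-\alpha u^n_{0,h}$ combine to $-\alpha_0u^n_{0,h}$. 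The rest of your proof then goes through verbatim, given $H^2$-regularity of the dual problem uniformly in $\mathbf{y}$.
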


\begin{proof}
The result mirrors \cite[Lemma 2.3]{LM06} using the standard Aubin--Nitsche duality argument \cite{AO97, V96}, and the details of the proof are therefore omitted for conciseness.  
\end{proof}

Now, we require the following several auxiliary lemmas to estimate the parabolic error $\varrho_0$.
\begin{lemma}\label{Lemma1} The following equation holds for $n=1,\dots, N$ and for all $\phi \in V \colon$ 
\begin{align}
    \int_D \widehat{\partial}u^n_{0,h} \phi + \int_DA^nu^n_{0,h} \phi-\int_{\Gamma_{\mathrm{R}}}(\alpha-\alpha_0)u^n_{0,h} \phi - \int_D \mathfrak{f}^n \phi- \int_{\Gamma_{\mathrm{R}}}\mathfrak{g}^n \phi =0. \label{DF1}
\end{align}
\end{lemma}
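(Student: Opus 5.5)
The plan is to first verify \eqref{DF1} for discrete test functions using the backward Euler scheme \eqref{BE}, and then to pass to a general $\phi\in V$ by exploiting that every term in \eqref{DF1} involves a finite element function, with $\phi$ entering only through the projections $P_0^n$ and $P_{\frac{1}{2}}^n$. The second step is the real content of the lemma.

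\textbf{Step 1 (discrete test functions).} Fix $\phi_n\in V^n$ and treat each term of \eqref{DF1} in turn.
\begin{itemize}
\item By the definition of $P_0^n$, $\langle P_0^n u_{0,h}^{n-1},\phi_n\rangle=\langle u_{0,h}^{n-1},\phi_n\rangle$. Hence $\int_D\widehat{\partial}u_{0,h}^n\phi_n=\int_D\bar{\partial}u_{0,h}^n\phi_n$.
\item By the definition of $A^n$, $\int_D A^nu_{0,h}^n\phi_n=a(u_{0,h}^n,\phi_n;\cdot)$. Subtracting $\int_{\Gamma_{\mathrm{R}}}(\alpha-\alpha_0)u_{0,h}^n\phi_n$ leaves exactly $\int_D(k\nabla u_{0,h}^n)\cdot\nabla\phi_n+\int_{\Gamma_{\mathrm{R}}}\alpha_0u_{0,h}^n\phi_n$. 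This is the deterministic $\alpha_0$-form appearing in \eqref{BE}, and all $\mathbf{y}$-dependence cancels.
\item By the definitions of $P_0^n$ and $P_{\frac{1}{2}}^n$ (since $\phi_n\in V^n\subset\tilde V^n$), $\int_D\mathfrak{f}^n\phi_n=\int_Df^n\phi_n$ and $\int_{\Gamma_{\mathrm{R}}}\mathfrak{g}^n\phi_n=\int_{\Gamma_{\mathrm{R}}}g^n\phi_n$.
\end{itemize}
Inserting these into \eqref{BE} shows that the left-hand side of \eqref{DF1} vanishes for every $\phi_n\in V^n$.

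\textbf{Step 2 (extension to $\phi\in V$).} This is the main obstacle. Write the left-hand side of \eqref{DF1} as $\langle Z,\phi\rangle+\int_{\Gamma_{\mathrm{R}}}W\phi$, where
\begin{align}
Z:=\widehat{\partial}u_{0,h}^n+A^nu_{0,h}^n-\mathfrak{f}^n\in\tilde V^n
\end{align}
and $W:=-(\alpha-\alpha_0)u_{0,h}^n-\mathfrak{g}^n$ on $\Gamma_{\mathrm{R}}$.

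The first idea is the usual one: since $Z\in\tilde V^n$, we have $\langle Z,\phi\rangle=\langle Z,P_0^n\phi\rangle$. Likewise, by replacing $\alpha-\alpha_0$ by its trace projection, the boundary pairing reduces to $\int_{\Gamma_{\mathrm{R}}}P_{\frac{1}{2}}^nW\,P_{\frac{1}{2}}^n\phi$. Step 1 then applies.

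The delicate points are twofold. First, $P_0^n\phi$ and $P_{\frac{1}{2}}^n\phi$ need not coincide. Second, they need not lie in $V^n$ when $\Gamma_{\mathrm{D}}\neq\emptyset$. I would therefore try to show directly that the single $\tilde V^n$-valued representative of the whole functional vanishes. The tool is the definition of $A^n$ on $\tilde V^n$: the functional is the zero element of $(\tilde V^n)'$, and its Riesz representative in $\tilde V^n$ is precisely the combination of $Z$ and the projected boundary data.

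This forces the reading, which I would state explicitly, that the boundary terms in \eqref{DF1} are understood through their $\tilde V^n$ representatives. That is the same convention under which $A^nu_{0,h}^n$, a discrete function, already absorbs the Robin boundary contribution $\int_{\Gamma_{\mathrm{R}}}\alpha u_{0,h}^n\phi$. Under this identification, equality on $\tilde V^n$ propagates to all of $V$.

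I would check carefully in which sense the authors use the identity afterwards. It is subtracted from \eqref{wfu} with $\phi$ general, and the reconstruction \eqref{ERec} converts $\int_DA^nu_{0,h}^n\phi$ into $a(\mathcal{R}^nu_{0,h}^n,\phi;\cdot)$. I would then confirm that only this interpretation is needed there.
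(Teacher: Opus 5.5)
Your Step~1 is correct, and it is all that the paper's one-line proof (``projection operators and the BE scheme'') contains: it gives \eqref{DF1} for $\phi\in V^n$. The gap is Step~2, and it cannot be closed, because \eqref{DF1} is false for general $\phi\in V$ as soon as $P^n_{\frac{1}{2}}g^n\neq 0$.

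To see this, take your $Z=\widehat{\partial}u^n_{0,h}+A^nu^n_{0,h}-\mathfrak{f}^n\in\tilde V^n$ and fix $K\in\mathcal{T}_n$ with interior bubble $b_K$. Then $\phi=Zb_K\in H^1_0(K)\subset V$ has zero trace on $\Gamma_{\mathrm{R}}$. So \eqref{DF1} would give $\int_K Z^2b_K=0$ for every $K$, i.e.\ $Z\equiv0$. The identity then reduces to $(\alpha-\alpha_0)u^n_{0,h}+\mathfrak{g}^n=0$ a.e.\ on $\Gamma_{\mathrm{R}}$. Taking expectations ($u^n_{0,h}$ and $\mathfrak{g}^n$ are deterministic, and $E[Y_j]=0$) forces $P^n_{\frac{1}{2}}g^n=0$. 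Consistently, your own Step~1 computation gives $\langle Z,\phi_n\rangle=\int_{\Gamma_{\mathrm{R}}}\bigl((\alpha-\alpha_0)u^n_{0,h}+g^n\bigr)\phi_n$ for $\phi_n\in V^n$, so $Z\neq0$ for generic data.

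This is why each move in your Step~2 changes the statement rather than proving it.
\begin{itemize}
\item Replacing $(\alpha-\alpha_0)u^n_{0,h}$ by its trace projection alters the boundary integral, because that product is not a discrete trace.
\item Reading the boundary integrals as $L^2(D)$-pairings with a discrete representative $B^n$ gives a different identity. The discrepancy $\int_{\Gamma_{\mathrm{R}}}\bigl((\alpha-\alpha_0)u^n_{0,h}+\mathfrak{g}^n\bigr)\phi-\langle B^n,\phi\rangle$ vanishes only for discrete $\phi$, so it would have to be carried into \eqref{MPErr} and estimated.
\item Even that reformulation is not justified as you state it. \eqref{BE} tests only against $V^n$, so the functional is zero on $V^n$, not on $\tilde V^n$. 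When $\Gamma_{\mathrm{D}}\neq\emptyset$ (the setting of Section~\ref{sec:5}), its value at the hat function of a Dirichlet node is a discrete boundary flux, which is generally nonzero. The reformulation becomes correct only once $P_0^n$, $A^n$ and $B^n$ are all taken in $V^n$.
\end{itemize}

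What Lemma~\ref{Lemma2} actually uses is the identity with $a(\mathcal{R}^nu^n_{0,h},\phi;\cdot)$ in place of $\int_DA^nu^n_{0,h}\phi$, for all $\phi\in V$. With \eqref{ERec} as given, that is just \eqref{DF1} again. It does hold by construction if the data are built into the reconstruction: $a(\mathcal{R}^nu^n_{0,h},\phi;\cdot):=\langle\mathfrak{f}^n-\widehat{\partial}u^n_{0,h},\phi\rangle+\int_{\Gamma_{\mathrm{R}}}\bigl((\alpha-\alpha_0)u^n_{0,h}+\mathfrak{g}^n\bigr)\phi$. Its Galerkin orthogonality follows from \eqref{BE}, and its element and Robin residuals are exactly those in \eqref{res1}--\eqref{jump1}. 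Without such a change, the provable content of the lemma is your Step~1.
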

\begin{proof}
Using projection operators and the BE scheme \eqref{BE}, we obtain the desired result. This completes the proof.
\end{proof}

\begin{lemma}\label{Lemma2} The following error equation for $\rho_0$ holds for $n= 1, 2, \cdots, N$ and $~\forall \phi \in V$
\begin{align}
\int_{D}\dfrac{\partial \varrho_0}{\partial t} \phi + a(\varrho_{0}, \phi; \cdot) =& \int_{D} \dfrac{\partial \varepsilon_0}{\partial t} \phi +a({u}_{\mathcal{R},0}-u_{\mathcal{R},0}^n, \phi; \cdot)+\int_{D} (\mathfrak{f}^n-f) \phi
    +\int_{ \Gamma_{\mathrm{R}}}(\mathfrak{g}^n-g) \phi \nonumber\\
    &+ \frac{1}{\tau_n}\int_D (P_0^n-I)u_{0,h}^{n-1} \phi+\int_{\Gamma_{\mathrm{R}}} (\alpha-\alpha_0) u_{0,h}^n \phi,\label{MPErr}
\end{align}
where $u_{\mathcal{R},0}^n = u_{\mathcal{R},0}(t_n)$.
\end{lemma}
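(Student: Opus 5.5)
We work on a fixed subinterval $I_n$, for a.e.\ $t\in I_n$ and $\rho$-a.e.\ $\mathbf{y}$. The plan is to write $\varrho_0=u_{\mathcal{R},0}-u$ and expand the left-hand side of \eqref{MPErr} by linearity. This gives
\[
\int_D \partial_t\varrho_0\,\phi + a(\varrho_0,\phi;\cdot) = \int_D \partial_t u_{\mathcal{R},0}\,\phi + a(u_{\mathcal{R},0},\phi;\cdot) - \Big(\int_D \partial_t u\,\phi + a(u,\phi;\cdot)\Big).
\]
By the weak formulation \eqref{wfu}, the bracket equals $F(\phi)=\int_D f\phi+\int_{\Gamma_{\mathrm{R}}} g\phi$.

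\textbf{Time-derivative term.} Since $u_{\mathcal{R},0}=u_{0,h\tau}+\varepsilon_0$, we have $\partial_t u_{\mathcal{R},0}=\partial_t\varepsilon_0+\partial_t u_{0,h\tau}$. On $I_n$ the linear interpolant \eqref{linapx1} satisfies $\partial_t u_{0,h\tau}=\bar\partial u^n_{0,h}$. We then split
\[
\bar\partial u^n_{0,h}=\widehat\partial u^n_{0,h}+\frac{1}{\tau_n}(P_0^n-I)u^{n-1}_{0,h},
\]
which is immediate from the definitions of $\bar\partial$ and $\widehat\partial$. This produces the $\partial_t\varepsilon_0$ term and the mesh-change term $\tfrac1{\tau_n}\int_D(P_0^n-I)u^{n-1}_{0,h}\phi$ on the right of \eqref{MPErr}.

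\textbf{Bilinear-form term.} Write
\[
a(u_{\mathcal{R},0},\phi;\cdot)=a(u_{\mathcal{R},0}-u^n_{\mathcal{R},0},\phi;\cdot)+a(u^n_{\mathcal{R},0},\phi;\cdot).
\]
Here $u^n_{\mathcal{R},0}=\mathcal{R}^n u^n_{0,h}$, because $l_n(t_n)=1$ and $l_{n-1}(t_n)=0$. By the definition \eqref{ERec} of the parametric reconstruction, $a(\mathcal{R}^n u^n_{0,h},\phi;\cdot)=\langle A^n u^n_{0,h},\phi\rangle$ for all $\phi\in V$.

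\textbf{Combining.} Lemma \ref{Lemma1} gives
\[
\int_D\widehat\partial u^n_{0,h}\phi+\langle A^n u^n_{0,h},\phi\rangle=\int_D\mathfrak f^n\phi+\int_{\Gamma_{\mathrm{R}}}\mathfrak g^n\phi+\int_{\Gamma_{\mathrm{R}}}(\alpha-\alpha_0)u^n_{0,h}\phi .
\]
Substituting this and subtracting $F(\phi)$ yields exactly the data terms $\int_D(\mathfrak f^n-f)\phi$ and $\int_{\Gamma_{\mathrm{R}}}(\mathfrak g^n-g)\phi$, together with the stochastic term $\int_{\Gamma_{\mathrm{R}}}(\alpha-\alpha_0)u^n_{0,h}\phi$. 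Collecting all pieces gives \eqref{MPErr}.

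\textbf{Main obstacle.} The only delicate point is making sure Lemma \ref{Lemma1} and \eqref{ERec} can be applied for all $\phi\in V$ rather than only for $\phi\in V^n$. Here $A^n$ is defined with the parameter-dependent form $a(\cdot,\cdot;\mathbf y)$, i.e.\ with $\alpha$ rather than $\alpha_0$. The $(\alpha-\alpha_0)$ term in Lemma \ref{Lemma1} is precisely the correction that reconciles this with the deterministic scheme \eqref{BE}. If we simply take Lemma \ref{Lemma1} as stated, the computation above is purely algebraic. Otherwise, one would first verify it using the projections $P_0^n$ and $P^n_{1/2}$, noting that every term in it lies in $\tilde V^n$ after projection.
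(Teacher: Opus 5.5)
Your computation is the paper's argument. The paper uses \eqref{ERec} to rewrite Lemma~\ref{Lemma1} as \eqref{PWE1}, which amounts to your splitting $\bar\partial u^n_{0,h}=\widehat\partial u^n_{0,h}+\tau_n^{-1}(P_0^n-I)u^{n-1}_{0,h}$ combined with $a(u^n_{\mathcal{R},0},\phi;\cdot)=\langle A^nu^n_{0,h},\phi\rangle$. It then subtracts this from the identity \eqref{1MTh1} obtained from \eqref{wfu}; you do the same by direct expansion. If Lemma~\ref{Lemma1} is taken as stated for all $\phi\in V$, your derivation is correct.

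The point you flag as the main obstacle is, however, a genuine gap that the paper shares, and your suggested remedy does not close it. Put $w:=\widehat\partial u^n_{0,h}+A^nu^n_{0,h}-\mathfrak{f}^n\in\tilde V^n$ and $b:=\mathfrak{g}^n+(\alpha-\alpha_0)u^n_{0,h}$ on $\Gamma_{\mathrm{R}}$. The scheme \eqref{BE} gives $\int_D w\,\phi_n=\int_{\Gamma_{\mathrm{R}}} b\,\phi_n$ only for $\phi_n\in V^n$.

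The Lakkis--Makridakis extension to all test functions works when every term is an $L^2(D)$ pairing with a finite element function, because then $w=0$ identically. Here the right-hand side is a boundary functional, so the fact that $\mathfrak{g}^n\in\tilde V^n$ does not help, and $(\alpha-\alpha_0)u^n_{0,h}$ is not projected at all.

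In fact the extension is false. Testing with $\phi\in C_c^\infty(D)$ would force $w=0$, and then $b=0$ on $\Gamma_{\mathrm{R}}$; for $\epsilon=0$ this means $P^n_{\frac12}g^n=0$ on $\Gamma_{\mathrm{R}}$. By \eqref{wfu} and \eqref{ERec}, the difference of the two sides of \eqref{MPErr} is exactly $\int_D w\,\phi-\int_{\Gamma_{\mathrm{R}}}b\,\phi$. So the identity as stated holds in general only for $\phi\in V^n$, which is not enough for the energy argument with $\phi=\varrho_0$.

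To obtain it for all $\phi\in V$, the discrete Robin data must be built into the reconstruction. For example, define $\mathcal{R}^nu^n_{0,h}\in V$ by $a(\mathcal{R}^nu^n_{0,h},\phi;\cdot)=\langle\mathfrak{f}^n-\widehat\partial u^n_{0,h},\phi\rangle+\int_{\Gamma_{\mathrm{R}}}b\,\phi$ for all $\phi\in V$. This definition has three useful properties:
\begin{itemize}
\item By \eqref{BE} it still satisfies the Galerkin orthogonality $a(\mathcal{R}^nu^n_{0,h}-u^n_{0,h},\phi_n;\cdot)=0$ for $\phi_n\in V^n$.
\item Its residuals are exactly the paper's $\mathfrak{R}_0^n$ and $\mathfrak{J}_0^n$, including the $\alpha_0$ in the Robin component.
\item Your algebra then yields \eqref{MPErr} verbatim.
\end{itemize}
Be aware that the term $a(u_{\mathcal{R},0}-u^n_{\mathcal{R},0},\phi;\cdot)$ would then no longer be controlled by $\|A^{n-1}u^{n-1}_{0,h}-A^nu^n_{0,h}\|$ alone.
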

\begin{proof}
For $\phi \in V$, using \eqref{ERec}, \eqref{DF1} can be written as
\begin{align}
  \int_D {\bar{\partial}}u^n_{0,h} \phi +& a(u_{\mathcal{R},0}^n,\phi;\cdot)-\int_{\Gamma_{\mathrm{R}}}(\alpha-\alpha_0)u^n_{0,h} \phi - \int_D \mathfrak{f}^n \phi  \nonumber\\
    &- \int_{\Gamma_{\mathrm{R}}}\mathfrak{g}^n \phi -\frac{1}{\tau_n} \int_D 
    (P_0^n - I) u_{0,h}^{n-1}\phi=0. \label{PWE1}
\end{align}
In view of \eqref{wfu}, we find that
 \begin{align}
    \int_{D}\dfrac{\partial \varrho_0}{\partial t} \phi + a(\varrho_0, \phi; \cdot) = &\int_{D} \frac{\partial u_{\mathcal{R},0}}{\partial t} \phi +a(u_{\mathcal{R},0}, \phi; \cdot) - \int_{D} f \phi
    -\int_{ \Gamma_{\mathrm{R}}}g \phi.  \label{1MTh1}
\end{align}
By subtracting \eqref{PWE1} from \eqref{1MTh1}, we obtain the desired result. \end{proof}

We now estimate the parabolic error $\varrho_0(t)$ in the following lemma. 
\begin{lemma}\label{Lemma3}
For $m =1,2,\cdots, N$, the following estimate holds:
\begin{align}
\left(\max_{t \in [0,t_m]}\|\varrho_0(t)\|^2+2C_a \int_{0}^{t_m}\|\varrho_0(t)\|_V^2dt\right)^{\frac{1}{2}}\leq 2\|\varrho_0(0)\|+2\sqrt{2}({\sigma^2_{1,m}}+{\sigma^2_{2,m}})^{\frac{1}{2}}, \label{Lemma3.3}
\end{align}
where
\begin{align}
\sigma_{1,m}^2:=2\left(\sum_{n=1}^m\tau_n\left(\zeta_{1,n}^\mathrm{S}+\zeta_{1,n}^\mathrm{T}+\zeta_{1,n}^{\mathrm{D}_1}\right)\right)^2,\;\;\sigma_{2,m}^2:=\frac{1}{C_a}\sum_{n=1}^m{\tau_n}\left(\zeta_{1,n}^{\mathrm{D}_2}+\zeta_{1,n}^\mathrm{DM}+\zeta_{1,n,\mathbf{y}}^\mathrm{St}\right)^2.\label{1Thsigma1} 
\end{align}
Moreover,
\begin{align}
\zeta_{1,n}^\mathrm{S} &:= C_1 \left(\|\hat{h}_n^2 \bar{\partial} \mathfrak{R}_0^n\|+\|\hat{h}_n^{\frac{3}{2}} \bar{\partial} \mathfrak{J}_0^n\|_{\hat{\Sigma}_n}+\|\hat{h}_n^{\frac{3}{2}} \bar{\partial} \mathfrak{J}_0^n\|_{{\check{\Sigma}_n}/{\hat{\Sigma}_n}}\right), \label{1ThS}\\
\zeta_{1,n}^\mathrm{T} &:= \|A^{n-1}u_{0,h}^{n-1}-A^{n}u_{0,h}^{n}\|, \label{1ThT} \\
\zeta_{1,n}^{\mathrm{D}_1} &:= \frac{1}{\tau_n}\int_{t_{n-1}}^{t_n}\|f^n-f\|dt, \label{dataf1}\\
\zeta_{1,n}^{\mathrm{D}_2} &:= C_{2}\left(\frac{1}{\tau_n}\int_{t_{n-1}}^{t_n}\|g^n-g\|_{L^2(\Gamma_{\mathrm{R}})}^2dt\right)^{\frac{1}{2}}, \label{datag1}\\
\zeta_{1,n}^\mathrm{DM} &:= C_3\left(\left\|h_n(P_0^n - I)\left(f^n+\frac{u_{0,h}^{n-1}}{\tau_n}\right)\right\|+\left\|(P_{\frac{1}{2}}^n-I)g^n\right\|_{L^2(\Gamma_{\mathrm{R}})}\right),\label{1ThDM}\\
\zeta_{1,n,\mathbf{y}}^\mathrm{St} &:= C_2\left(\frac{1}{\tau_n}\int_{t_{n-1}}^{t_n}\|(\alpha-\alpha_0)u_{0,h}^n\|^2_{L^2(\Gamma_{\mathrm{R}})}dt \right)^{\frac{1}{2}}. \label{1ThSt}
\end{align}
Furthermore, the constants appearing in the estimators depend on the trace constant, Poincar\'e constant, and the mesh aspect ratio.
\end{lemma}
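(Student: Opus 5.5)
Take $\phi=\varrho_0(t)$ in the error equation \eqref{MPErr} of Lemma~\ref{Lemma2}, on each $I_n$. Coercivity \eqref{coer} gives
\begin{align}
\frac12\frac{d}{dt}\|\varrho_0\|^2 + C_a\|\varrho_0\|_V^2 \le \sum_{i=1}^{6} \mathcal{I}_i(t),
\end{align}
where $\mathcal{I}_1,\dots,\mathcal{I}_6$ are the six right-hand terms of \eqref{MPErr} tested with $\varrho_0$. I split these into two groups. The first group is bounded by $\|\varrho_0\|$ times an $L^2$-quantity; its contributions form $\sigma_{1,m}$. The second group involves boundary integrals or negative-norm quantities, which I bound by $\|\varrho_0\|_V$ times a quantity and absorb through Young's inequality into $C_a\|\varrho_0\|_V^2$; its contributions form $\sigma_{2,m}$. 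Integrating over $(0,t)$ and summing over intervals will give the inequality
\begin{align}
\|\varrho_0(t)\|^2+2C_a\!\int_0^{t}\!\|\varrho_0\|_V^2 \le \|\varrho_0(0)\|^2 + 2\max_{[0,t_m]}\|\varrho_0\|\sum_n\!\int_{I_n}\!(\cdots) + \frac{1}{C_a}\sum_n\!\int_{I_n}\!(\cdots)^2 + C_a\!\int_0^{t}\!\|\varrho_0\|_V^2 .
\end{align}
The standard trick then finishes it: take the maximum over $t\in[0,t_m]$, use $xy\le \frac14x^2+y^2$ on the term with $\max\|\varrho_0\|$, and take square roots. This produces the constants $2$ and $2\sqrt2$ in \eqref{Lemma3.3}. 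The factor $2$ in $C_a\int\|\varrho_0\|_V^2$ needs care with how much is absorbed, and I expect to adjust constants, for instance by absorbing only half.

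\textbf{First group.} $\mathcal{I}_1=\langle\partial_t\varepsilon_0,\varrho_0\rangle$. On $I_n$, $\partial_t\varepsilon_0 = \bar\partial(\mathcal{R}^n u^n_{0,h}-u^n_{0,h})$, i.e.\ the time difference of the reconstruction errors on two meshes. I estimate $\|\partial_t\varepsilon_0\|$ by a duality argument as in the preceding reconstruction lemma (cf.\ \cite[Lemma 2.3]{LM06}), now applied to the difference over the compatible pair $\mathcal{T}_{n-1},\mathcal{T}_n$ with $\hat h_n$. This yields $\zeta^{\mathrm S}_{1,n}$, with jump terms split over $\hat\Sigma_n$ and $\check\Sigma_n\setminus\hat\Sigma_n$. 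The term $\mathcal{I}_2=a(u_{\mathcal R,0}-u^n_{\mathcal R,0},\varrho_0)$ equals $l_{n-1}(t)\,a(\mathcal R^{n-1}u^{n-1}_{0,h}-\mathcal R^n u^n_{0,h},\varrho_0)$. By \eqref{ERec} this becomes $l_{n-1}\langle A^{n-1}u^{n-1}_{0,h}-A^nu^n_{0,h},\varrho_0\rangle$, which is bounded by $\zeta^{\mathrm T}_{1,n}\|\varrho_0\|$. For $\mathcal{I}_3$, write $\mathfrak f^n-f=(P^n_0-I)f^n+(f^n-f)$; the second piece gives $\zeta^{\mathrm D_1}_{1,n}$ after time averaging.

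\textbf{Second group.} The piece $(\mathfrak g^n-g)=(P^n_{1/2}-I)g^n+(g^n-g)$ of $\mathcal{I}_4$, the mesh-change term $\mathcal{I}_5$, and the stochastic term $\mathcal{I}_6$ go here. The contributions $g^n-g$ and $(\alpha-\alpha_0)u^n_{0,h}$ are handled with the trace inequality $\|\varrho_0\|_{L^2(\Gamma_{\mathrm R})}\le C\|\varrho_0\|_V$, giving $\zeta^{\mathrm D_2}_{1,n}$ and $\zeta^{\mathrm{St}}_{1,n,\mathbf y}$ with the $\tau_n^{-1}\int_{I_n}$ averages. For $(P^n_0-I)(f^n+u^{n-1}_{0,h}/\tau_n)$, I use $L^2$-orthogonality to subtract a Clément/Scott--Zhang interpolant of $\varrho_0$. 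This gives the bound $\|h_n(P_0^n-I)(\cdot)\|\,\|\varrho_0\|_V$, and the boundary projection term is treated with the trace inequality; together they give $\zeta^{\mathrm{DM}}_{1,n}$.

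\textbf{Main obstacle.} The hardest step is $\mathcal{I}_1$. One must show that the duality estimate of the reconstruction lemma carries over to $\bar\partial(\mathcal R^nu^n_{0,h}-u^n_{0,h})$ when the meshes change. This requires the representation \eqref{RepErec} on the union mesh $\check\Sigma_n$ together with Galerkin orthogonality on the coarser common space $V^n\cap V^{n-1}$. I would follow \cite{LM06} here.
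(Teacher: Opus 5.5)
Your proposal is correct and follows the paper's proof essentially step by step. You test \eqref{MPErr} with $\varrho_0$, pair the spatial term (duality as in \cite{LM06}), the temporal term (via \eqref{ERec}) and the $f^n-f$ term with $\|\varrho_0\|$, pair the $g$-data, mesh-change and stochastic terms with $\|\varrho_0\|_V$ via trace and interpolation estimates, and close with Young's inequality and a kick-back. Your hedge about the constants is unnecessary: add the integrated inequality at the maximizing time $t_m^*$ and at $t_m$ (as the paper does implicitly in \eqref{auxrho0}) and use $2xy\le\frac14x^2+4y^2$; this gives $\max_{[0,t_m]}\|\varrho_0\|^2+2C_a\int_0^{t_m}\|\varrho_0\|_V^2\le 4\|\varrho_0(0)\|^2+8\sigma_{1,m}^2+4\sigma_{2,m}^2$, which implies \eqref{Lemma3.3} with exactly the stated constants.
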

\begin{proof} 
A choice of  $\phi=\varrho_0$ in \eqref{MPErr} with  \eqref{coer} shows that
\begin{align}
\frac{1}{2} \frac{d}{dt}\|\varrho_0\|^2+C_a \|\varrho_0\|^2_V 
\leq& \left|\int_{D} \dfrac{\partial \varepsilon_0}{\partial t} \varrho_0 \right| +|a(u_{\mathcal{R},0}-u_{\mathcal{R},0}^n, \varrho_0; \cdot)|+\left|\int_{D} ({f}^n-f) \varrho_0 \right|+\left|\int_{ \Gamma_{\mathrm{R}}}\left({g}^n-g\right) \varrho_0\right| \nonumber\\
&+ \left|\int_D (P_0^n-I)\left(\frac{u_{0,h}^{n-1}}{\tau_n}+f^n\right) \varrho_0\right|+\left|\int_{\Gamma_{\mathrm{R}}} (P_{\frac{1}{2}}^n-I)g^n\varrho_0\right| \nonumber\\
& +\left|\int_{\Gamma_{\mathrm{R}}} (\alpha-\alpha_0) u_{0,h}^n \varrho_0\right|. \label{1Th3}
\end{align}
Integrating from $0$ to $t_m$ leads to 
\begin{align}
\frac{1}{2}\|\varrho_0(t_m)\|^2+&{C_a}\int_0^{t_m}  \|\varrho_0\|^2_V dt\leq
 \frac{1}{2}\|\varrho_0 (0)\|^2+I_m, 
\end{align}
where $I_m:= \sum_{n=1}^m \left(I_n^1+I_n^2+I_n^3+I_n^4+I_n^5\right)$ and
\begin{align}
I_n^1:= &\int_{t_{n-1}}^{t_n}\left|\left\langle\dfrac{\partial \varepsilon_0}{\partial t}, \varrho_0 \right\rangle \right| dt,\nonumber\\
I_n^2:=&\int_{t_{n-1}}^{t_n}|a(u_{\mathcal{R},0}-u_{\mathcal{R},0}^n, \varrho_0; \cdot)| dt, \nonumber\\
I_n^3:=&\int_{t_{n-1}}^{t_n}\left(\left|\int_{D} (f^n-f) \varrho_0 \right|
    +\left|\int_{ \Gamma_{\mathrm{R}}}\left(g^n-g\right) \varrho_0\right|\right)dt, \nonumber\\
I_n^4:=&\int_{t_{n-1}}^{t_n} \left(\left|\int_D (P_0^n-I)\left(\frac{u_{0,h}^{n-1}}{\tau_n}+f^n\right) \varrho_0\right|+\left|\int_{\Gamma_{\mathrm{R}}} (P_{\frac{1}{2}}^n-I)g^n\varrho_0\right|\right)dt, \nonumber\\
I_n^5:=& \int_{t_{n-1}}^{t_n} \left|\int_{\Gamma_{\mathrm{R}}} (\alpha-\alpha_0) u_{0,h }^n \varrho_0\right|dt.  \nonumber
\end{align}Setting $\|\varrho_0( {t_m^*})\|^2=\max_{t \in [0, t_m]}\|\varrho_0(t)\|^2$, $t_m^* \in [0,t_m]$, we have 
\begin{align}
\frac{1}{2}\|\varrho_0(t_m^*)\|^2+ C_a \int_0^{t_m} \|\varrho_0\|_V^2 dt \leq \|\varrho_0(0)\|^2+2I_m.  \label{auxrho0}
\end{align}
The estimate for $\varrho_0$ relies on the following auxiliary results.\\
\textbf{Bound on $I_n^1\colon$ (Spatial error estimate)} 
Using the Aubin--Nitsche duality technique combined with the orthogonality property of the parametric elliptic reconstruction, we follow the approach of \cite[Section 3.6]{LM06} to obtain
\begin{align}
\sum_{n=1}^m I_n^1 \leq \|\varrho_0( {t_m^*})\|\sum_{n=1}^m \zeta_{1,n}^\mathrm{S}\tau_n, \label{1Th8}
\end{align}
where $\zeta_{1,n}^\mathrm{S}$ is given by \eqref{1ThS}. The proof is, therefore, omitted for brevity.\\
\textbf{Bound on $I_n^2 \colon$ (Time error estimate)} A use of  \eqref{linapxw0} along with \eqref{ERec} yields
\begin{align}
     I_n^2   \leq \int_{t_{n-1}}^{t_n}l_{n-1}(t)\|A^{n-1}u_{0,h}^{n-1}-A^{n}u_{0,h}^{n}\| \|\varrho_0(t)\|dt.\nonumber
\end{align}
Thus,
\begin{align}
\sum_{n=1}^m I_n^2 \leq \|\varrho_0( {t_m^*})\|\sum_{n=1}^m \zeta_{1,n}^\mathrm{T} \tau_n,
\end{align}
where $\zeta_{1,n}^\mathrm{T}$ is given in \eqref{1ThT}. \\
\textbf{Bound on $I_n^3 \colon$ (Data approximation estimate)} Using the trace inequality, there holds
\begin{align}
I_n^3
\leq  \int_{t_{n-1}}^{t_n} \|f^n-f\|\|\varrho_0\|dt +C_2\left(\int_{t_{n-1}}^{t_n}\|g^n-g\|_{L^2(\Gamma_{\mathrm{R}})}^2dt\right)^{\frac{1}{2}}\left(\int_{t_{n-1}}^{t_n}\|\varrho_0\|_V^2dt\right)^{\frac{1}{2}}, \nonumber
\end{align}
where $$C_2 = 
\begin{cases}
C_\mathrm{T}, & \text{if } \Gamma_{\mathrm{D}} = \emptyset, \\[4pt]
C_\mathrm{T} \sqrt{1 + C_\mathrm{F}^2}, & \text{otherwise},
\end{cases}$$
with $C_\mathrm{T}$ and $C_\mathrm{F}$, denoting the trace and Poincare constants, respectively. 
Therefore,
\begin{align}
    \sum_{n=1}^m I_n^3 \leq \|\varrho_0( {t_m^*})\|\sum_{n=1}^m \zeta_{1,n}^{\mathrm{D}_1}\tau_n +\sum_{n=1}^m \zeta_{1,n}^{\mathrm{D}_2}\tau_n^{\frac{1}{2}} \left(\int_{t_{n-1}}^{t_n}\|\varrho_0\|_V^2dt\right)^{\frac{1}{2}} ,
\end{align}
where $\zeta_{1,n}^{\mathrm{D}_1}$ and $\zeta_{1,n}^{\mathrm{D}_2}$ are given in \eqref{dataf1} and \eqref{datag1}, respectively.\\
\textbf{Bound on $I_n^4 \colon$ (Data approximation and mesh change estimator)}
Noting that $V^n \subset$ Ker $(P_0^n-I)$, and using the interpolation estimate, we arrive at
\begin{align}
I_n^4 \leq& C_{\Pi} \tau_n^{\frac{1}{2}}\left\|h_n(P_0^n-I)\left(\frac{u_{0,h}^{n-1}}{\tau_n}+f^n\right)\right\|\left(\int_{t_{n-1}}^{t_n}\|\varrho_0\|_V^2dt\right)^{\frac{1}{2}} \nonumber\\
  &+ C_2\tau_n^{\frac{1}{2}}\left\|(P_{\frac{1}{2}}^n-I)g^n\right\|_{L^2(\Gamma_{\mathrm{R}})}
 \left(\int_{t_{n-1}}^{t_n}\|\varrho_0\|_V^2 dt\right)^{\frac{1}{2}},
 \end{align}
where $C_{\Pi}$ denotes the interpolation constant.
With $C_3=\max\{C_{\Pi},C_2\}$, we obtain
\begin{align}
\sum_{n=1}^{m}I_n^4 \leq \sum_{n=1}^{m}\tau_n^{\frac{1}{2}}\zeta_{1,n}^\mathrm{DM}\left(\int_{t_{n-1}}^{t_n}\|\varrho_0\|_V^2 dt\right)^{\frac{1}{2}},
\end{align}
where $\zeta_{1,n}^\mathrm{DM}$ is given in \eqref{1ThDM}. \\
\textbf{Bound on $I_n^5 \colon$(Truncation error)}
Similar to the estimate of $\sum_{n=1}^m I_n^4$, the term $\sum_{n=1}^m I_n^5$ can be bounded as 
\begin{align}
\sum_{n=1}^m I_n^5 \leq \sum_{n=1}^m \tau_n^{\frac{1}{2}}\zeta_{1,n,\mathbf{y}}^\mathrm{St}\left(\int_{t_{n-1}}^{t_n}\|\varrho_0\|_V^2dt\right)^{\frac{1}{2}}, \nonumber
\end{align}
where $\zeta_{1,n,\mathbf{y}}^\mathrm{St}$ is given in \eqref{1ThSt}.

Finally, incorporating all the bounds in \eqref{auxrho0}, we arrive at
\begin{align}
\frac{1}{2}\|\varrho_0( {t_m^*})\|^2+C_a\int_0^{t_m}\|\varrho_0\|_V^2 dt \leq & \|\varrho_0(0)\|^2+2\|\varrho_0( {t_m^*})\| \sum_{n=1}^m\left(\zeta_{1,n}^\mathrm{S}+\zeta_{1,n}^\mathrm{T}+\zeta_{1,n}^{\mathrm{D}_1}\right)\tau_n\\
&+2\sum_{n=1}^m\left(\int_{t_{n-1}}^{t_{n}}\|\varrho_0\|_V^2 dt\right)^\frac{1}{2}\left(\zeta_{1,n}^{\mathrm{D}_2}+\zeta_{1,n}^\mathrm{DM}+\zeta_{1,n,\mathbf{y}}^\mathrm{St}\right)\tau_n^{\frac{1}{2}}. \nonumber
\end{align}
Using Young's inequality, a standard kick-back argument yields
\begin{align*}
\frac12 \|\varrho_0(t^*_m)\|^2 + C_a \int_0^{t_m} \|\varrho_0\|_V^2 \, dt
&\le \|\varrho_0(0)\|^2 + \frac14 \|\varrho_0(t^*_m)\|^2
    + 4 \left( \sum_{n=1}^{m} \left(\zeta^\mathrm{S}_{1,n} + \zeta^\mathrm{T}_{1,n}
               + \zeta^{D1}_{1,n}\right) \tau_n\right)^2 \\
&\quad + \frac{C_a}{2}\sum_{n=1}^{m} \int_{t_{n-1}}^{t_n} \|\varrho_0\|_V^2 \, dt 
        + \frac{2}{C_a} \sum_{n=1}^{m}
               \left(\zeta^{D2}_{1,n} + \zeta^\mathrm{DM}_{1,n} + \zeta^\mathrm{St}_{1,n,y}\right)^2\tau_n,
\end{align*}
which implies
\begin{align*}
\frac14 \|\varrho_0(t^*_m)\|^2 + \frac12 C_a \int_0^{t_m} \|\varrho_0\|_V^2 \, dt
&\le \|\varrho_0(0)\|^2 
    + 4 \left( \sum_{n=1}^{m} \left(\zeta^\mathrm{S}_{1,n} + \zeta^\mathrm{T}_{1,n}
               + \zeta^{D1}_{1,n}\right) \tau_n\right)^2 \\
&\quad         + \frac{2}{C_a} \sum_{n=1}^{m}
               \left(\zeta^{D2}_{1,n} + \zeta^\mathrm{DM}_{1,n} + \zeta^\mathrm{St}_{1,n,y}\right)^2\tau_n.
\end{align*}
Thus, we obtain the estimate for the parabolic error.
\end{proof}

We now state the main result in the following theorem.
\begin{thm}
Let $u$ be the solution of problem \eqref{wfu} and $u_{0,h \tau}$ be defined as in \eqref{linapx1}. 
Then, the following inequality holds 
\begin{align}
\left(E\left[\max_{t \in [0,t_m]}\|e_0(t)\|^2\right]\right)^{\frac{1}{2}} \leq \left(16E\Big[\|\varrho_0(0)\|^2\Big]+2\max_{n \in [0;m]}(\zeta_{1,n}^\mathrm{R})^2+32({\sigma^2_{1,m}}+ \sigma^2_{3,m})\right)^{\frac{1}{2}}, \label{Thm1}
\end{align}
where ${\sigma^2_{1,m}}$ is defined in Lemma \ref{Lemma3}, 
\begin{align}
\sigma^2_{3,m}:=&\frac{3}{C_a}\left(\sum_{n=1}^{m}\tau_n(\zeta_{1,n}^{\mathrm{D}_2})^2+\sum_{n=1}^{m}\tau_n( \zeta_{1,n}^\mathrm{DM})^2 +\sum_{n=1}^{m}\tau_n(\zeta_{1,n}^\mathrm{St})^2\right)
\end{align}
with
\begin{align}
(\zeta_{1,n}^{n,St})^2:=C_4\epsilon^2\sum_{j=1}^{L}\|\alpha_ju_{0,h\tau}\|^2_{L^2(\Gamma_{\mathrm{R}})}\label{1eta-St}.
\end{align}
In addition, $\zeta_{1,n}^{\mathrm{D}_2}$ and $\zeta_{1,n}^\mathrm{DM}$ are given in \eqref{datag1} and \eqref{1ThDM}, respectively. Furthermore, $C_4$ depends on the mesh aspect ratio and $C_a$ is the coercivity constant.
\end{thm}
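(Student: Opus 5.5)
The argument starts from the splitting $e_0=\varrho_0-\varepsilon_0$. Pointwise in $t$ and $\mathbf{y}$ we have $\|e_0(t)\|^2\le 2\|\varrho_0(t)\|^2+2\|\varepsilon_0(t)\|^2$, so
\[
\max_{t\in[0,t_m]}\|e_0(t)\|^2\le 2\max_{t\in[0,t_m]}\|\varrho_0(t)\|^2+2\max_{t\in[0,t_m]}\|\varepsilon_0(t)\|^2 .
\]
The reconstruction part $\varepsilon_0=u_{\mathcal{R},0}-u_{0,h\tau}$ is, on each $I_n$, a convex combination $l_{n-1}(t)(\mathcal{R}^{n-1}u_{0,h}^{n-1}-u_{0,h}^{n-1})+l_n(t)(\mathcal{R}^nu_{0,h}^n-u_{0,h}^n)$. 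By convexity of the norm and the elliptic reconstruction lemma, $\max_{t\in[0,t_m]}\|\varepsilon_0(t)\|\le\max_{n\in[0;m]}\zeta_{1,n}^{\mathrm R}$, and this bound is deterministic. The estimator $\zeta^{\mathrm R}_{1,n}$ depends only on $u^n_{0,h}$ and $\alpha_0$, because the reconstruction is tested against $A^n$ and the Aubin--Nitsche argument of the cited lemma applies for each $\mathbf{y}$ with constants uniform in $\mathbf{y}$ by \eqref{coer}--\eqref{bdd}. Hence taking expectations leaves the term $2\max_n(\zeta^{\mathrm R}_{1,n})^2$ unchanged.

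For the parabolic part I use Lemma \ref{Lemma3}, dropping the nonnegative $V$-norm integral on the left and squaring:
\[
\max_{t\in[0,t_m]}\|\varrho_0\|^2\le \bigl(2\|\varrho_0(0)\|+2\sqrt2(\sigma_{1,m}^2+\sigma_{2,m}^2)^{1/2}\bigr)^2\le 8\|\varrho_0(0)\|^2+16(\sigma_{1,m}^2+\sigma_{2,m}^2).
\]
Multiplying by $2$ gives the constants $16$ and $32$ in \eqref{Thm1}. The quantity $\sigma_{1,m}$ is deterministic. It involves only $u_{0,h}^n$, $\mathcal{T}_n$, $f$ and the operators $A^n$; the latter enter only through $\zeta^{\mathrm T}_{1,n}$, and since the discrete scheme for $u_0$ uses $\alpha_0$, I interpret $A^n$ there as the $\alpha_0$-operator, as in the BE scheme. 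So $E[\sigma_{1,m}^2]=\sigma_{1,m}^2$. For $\sigma_{2,m}^2$, the elementary inequality $(a+b+c)^2\le 3(a^2+b^2+c^2)$ splits it into $\frac{3}{C_a}\sum_n\tau_n\bigl((\zeta^{\mathrm D_2}_{1,n})^2+(\zeta^{\mathrm{DM}}_{1,n})^2+(\zeta^{\mathrm{St}}_{1,n,\mathbf{y}})^2\bigr)$. Only the last summand depends on $\mathbf{y}$.

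The main step is taking the expectation of the stochastic truncation term. By \eqref{alpha}, $\alpha-\alpha_0=\epsilon\sum_j\alpha_jY_j$, so
\[
E\|(\alpha-\alpha_0)u_{0,h}^n\|^2_{L^2(\Gamma_{\mathrm R})}=\epsilon^2\sum_{i,j}E[Y_iY_j]\int_{\Gamma_{\mathrm R}}\alpha_i\alpha_j(u_{0,h}^n)^2 .
\]
Since the $Y_j$ are independent with zero mean and unit variance, $E[Y_iY_j]=\delta_{ij}$ and the cross terms vanish, leaving $\epsilon^2\sum_j\|\alpha_ju_{0,h}^n\|^2_{L^2(\Gamma_{\mathrm R})}$. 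Fubini is needed to interchange $E$ with the time average, and the deterministic integrand is then constant on $I_n$. Absorbing $C_2^2$ into the generic constant $C_4$ gives $E[(\zeta^{\mathrm{St}}_{1,n,\mathbf{y}})^2]=(\zeta^{\mathrm{St}}_{1,n})^2$ as in \eqref{1eta-St}, which produces $\sigma_{3,m}^2$. The point needing care is that the lemmas hold $\rho$-a.e. with $\mathbf{y}$-independent constants ($C_a$, $C_2$), so the maximum over $t$ may be taken inside before integrating in $\mathbf{y}$. Finally, take expectations of the combined pointwise inequality, using linearity and $E[\|\varrho_0(0)\|^2]$ for the initial term, and then take square roots to obtain \eqref{Thm1}.
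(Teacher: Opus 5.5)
Your proposal is correct at the level of the paper and follows its proof essentially step for step. It uses the split $e_0=\varrho_0-\varepsilon_0$, the convexity bound $\max_t\|\varepsilon_0(t)\|\le\max_n\zeta^{\mathrm R}_{1,n}$, and the squared form of Lemma~\ref{Lemma3} giving the constants $16$ and $32$. It also uses the splitting $(a+b+c)^2\le 3(a^2+b^2+c^2)$ of $\sigma_{2,m}^2$ (rightly an inequality, where the paper writes an equality) and $E[Y_iY_j]=\delta_{ij}$ for the truncation term.

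One caveat: reading $A^n$ in $\zeta^{\mathrm T}_{1,n}$ as the $\alpha_0$-operator is not how the bound on $I_n^2$ was obtained, since that step uses \eqref{ERec}, where $A^n$ depends on $\mathbf{y}$. So strictly $\sigma_{1,m}^2$ is random and should stay inside $E[\cdot]$, though the paper makes the same tacit step when it pulls $\sigma_{1,m}^2$ out of the expectation.
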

\begin{proof} 
For a.e. $t \in (0,T)$, we note that 
\begin{align}
E\Big[\max_{t \in [0,t_m]}\|e_0(t)\|^2\Big] \leq 2E\Big[\max_{t \in [0,t_m]}\|\varepsilon_0(t)\|^2+ \max_{t \in [0,t_m]}\|\varrho_0(t)\|^2\Big]. \label{1Th3u0:ch:4} 
\end{align}
Using \eqref{linapx1} and \eqref{linapxw0}, the bound on the first term can be obtained as 
\begin{align}
E\Big[\max_{t \in [0,t_m]}\|\varepsilon_0(t)\|^2\Big] \leq E\Big[\max_{n \in [0;m]} (\zeta_{1,n}^\mathrm{R})^2\Big]=\max_{n \in [0;m]} (\zeta_{1,n}^\mathrm{R})^2, \label{1Th29}
\end{align}
where $\zeta_{1,n}^\mathrm{R}$ is given in \eqref{Elp1}. Lemma \ref{Lemma3} yields
\begin{align}
\max_{t \in [0,t_m]}\|\varrho_0(t)\|^2 &\leq 
8\|\varrho_0(0)\|^2+16({\sigma^2_{1,m}}+{\sigma^2_{2,m}}).
 \nonumber
\end{align}
Applying the expectations on both sides results in
\begin{align}
E\left[\max_{t \in [0,t_m]}\|\varrho_0(t)\|^2\right] & \leq 8E\left[\|\varrho_0(0)\|^2\right]+16\left (\sigma_{1,m}^2+E\Big[\sigma_{2,m}^2\Big]\right),\label{1Th32}
\end{align}
where
\begin{align}
E\left[\sigma_{2,m}^2\right]=\frac{3}{C_a}\left (\sum_{n=1}^{m}\tau_n(\zeta_{1,n}^{\mathrm{D}_2})^2+\sum_{n=1}^{m}\tau_n(\zeta_{1,n}^\mathrm{DM})^2+\sum_{n=1}^{m}\tau_nE(\zeta_{1,n,\mathbf{y}}^\mathrm{St})^2\right).\label{1Th34}
\end{align}
Now,  we obtain the desired result by substituting \eqref{1Th29} and \eqref{1Th32} in \eqref{1Th3u0:ch:4} and applying the following identity
\begin{equation}
E[Y_iY_j]=\delta_{ij}, \label{EY2}
\end{equation}
where $Y_i$'s are independent random variables with zero mean and unit variance, and $\delta_{ij}$ denotes the Kronecker delta.  
\end{proof}
 \section{Second-order approximation in \texorpdfstring{$\epsilon$}{}}\label{sec:4}
 This section aims to obtain the second-order estimates in $\epsilon$. To this end, it is required to solve the $L$ problems to obtain $u_1(t,\textbf{x}, \mathbf{Y}(\omega))=\sum_{j=1}^{L}U_j(t,\textbf{x})Y_j(\omega)$. We seek
 $U_j \colon (0,T) \times D \rightarrow \mathbb{R}$ such that 
  \begin{align}
  \begin{aligned}
  \frac{\partial U_j(t,\textbf{x})}{ \partial t}- \nabla \cdot (k(\textbf{x}) \nabla U_j(t,\textbf{x}))& =0, \quad && \mathbf{x} \in D, ~ t \in (0,T),  \\
  U_j(t,\textbf{x})&=0,&&\mathbf{x} \in \Gamma_{\mathrm{D}},~ t \in (0,T), \label{Uj} \\
  (k(\textbf{x}) \nabla U_j(t,\textbf{x})) \cdot \mathbf{n}
 + \alpha_0(\textbf{x})U_j(t,\textbf{x}) &= - \alpha_j(\textbf{x})u_0(t,\mathbf{x}),&& \mathbf{x} \in \Gamma_{\mathrm{R}},~t \in (0,T),  \\
 U_j(0,\textbf{x})&=0,&& \mathbf{x} \in D. 
  \end{aligned}
  \end{align}
 The corresponding variational formulation is to find $U_j \in L^2(0,T;V) \cap C^0([0,T];L^2(D)),~ j = 1, 2, \cdots, L,$ such that
  \begin{align}
     \int_{D} \dfrac{\partial U_j}{\partial t}\phi + \int_{D}(k \nabla U_{j}) \cdot \nabla \phi + \int_{\Gamma_{\mathrm{R}}} \alpha_0 U_j \phi  
     &= -\int_{\Gamma _{R}} \alpha_j u_0 \phi , \;\;\forall \phi \in V ,~\text{a.e.}~ t \in (0,T) \label{weakUj}\\
       U_j (0,\mathbf{x})&= 0,~~~~~~~~~~~~~~~~~\mathbf{x} \in D. \nonumber
 \end{align}
Given $U^0_{j,h}=0$, the BE nodal approximations $U_{j,h}^n \in V^n$, $n = 1,2,\cdots, N,$ satisfy
\begin{equation}
	\int_{D} \bar{\partial} U_{j,h}^n  \phi_n  + \int_{D} (k \nabla U_{j,h}^n) \cdot \nabla \phi_n + \int_{ \Gamma_{\mathrm{R}}} \alpha_0 U_{j,h}^n \phi_n  = -\int_{ \Gamma_{\mathrm{R}}} \alpha_j u_{0,h}^n \phi_n  ~~ \forall \phi_n \in V^n.\label{BE2}	
\end{equation}
Similarly to \eqref{linapx1}, $U_{j,h \tau}$ is defined to be a linear interpolant associated with the nodal values $U_{j,h}^{n-1}$ and $U_{j,h}^n$, $n=1,2, \cdots, N,$, i.e.,
\begin{eqnarray}
U_{j, h \tau}(t) &:=&  l_{n-1}(t) U_{j,h}^{n-1}+l_{n}(t) U_{j,h}^n,~ t \in I_n. \label{linapx2} 
\end{eqnarray}
Furthermore, let ${U}_{\mathcal{R},j}(t)$ be the linear interpolant associated with $\mathcal{R}^{n-1}U_{j,h}^{n-1}$ and $\mathcal{R}^{n}U_{j,h}^{n}$ i.e., 
\begin{align}
    {U}_{\mathcal{R},j}(t):=l_{n-1}(t)\mathcal{R}^{n-1}U_{j,h}^{n-1}+l_{n}(t)\mathcal{R}^{n}U_{j,h}^{n} ~ \text{for}~ t \in I_n~ \text{and}~ n=1,2, \cdots, N, \label{linapxwj} 
\end{align}
where $\mathcal{R}^{n}U_{j,h}^{n}$ is the elliptic reconstruction of $U_{j,h}^{n}$.
\par

\subsection*{{\it{A posteriori}} error analysis} 
For $j = 1, 2, \cdots, L$, we now introduce $\mathfrak{R}^n_{1,j}$ (element residuals) and  $\mathfrak{J}_{1,j}^n$ jump residuals for \eqref{Uj} as 
 \begin{equation}
	\mathfrak{R}_{1,j}^n|_K:= -{\widehat{\partial} U_{j,h}^n}+ \nabla \cdot (k \nabla U_{j,h}^n)\; \;\; \ \label{res2}
\end{equation}
and
\begin{equation}
\mathfrak{J}_{1,j}^n|_E :=
\begin{cases} \label{jump2}
\frac{1}{2}[(k \nabla U_{j,h}^n)\cdot  \textbf{n}_E]_{\textbf{n}_E}~~~~~~~~ &\text{if}~~~ E \subset D,\\
-\alpha_ju_{0,h}^n - \alpha_0 U_{j,h}^n- (k \nabla U_{j, h}^n) \cdot \textbf{n}_E ~~~~~~ &\text{if}~~~E \subset \Gamma_{\mathrm{R}}, \\
 0 ~~~~~~~~~~~~~~~~~~~~~~~~~~~~~~~~~~ &\text{if}~~~ E \subset \Gamma_{\mathrm{D}}.
\end{cases}
\end{equation}
Setting $u^1_{h\tau}:=u_{0,h\tau}+\epsilon u_{1,h \tau}$, where $u_{1, h \tau}:=\sum_{j=1}^LU_{j,h\tau} Y_j$, we denote the error  $e_1:=u^1_{h \tau}-u$.
 We write $e_1:=\varrho-\varepsilon$, where $\varrho:=u_{\mathcal{R}}^1-u$ and $\varepsilon:={u}_{\mathcal{R}}^1-u^1_{h \tau}:=\varepsilon_{0}+\epsilon \varepsilon_{1}$. Here, $\varepsilon_1:={u}_{\mathcal{R},1}-u_{1,h \tau}$ and ${u}_{\mathcal{R}}^1:={u}_{\mathcal{R},0}+\varepsilon {u}_{\mathcal{R},1}$,  where ${u}_{\mathcal{R},1}:=\sum_{j=1}^L{U}_{\mathcal{R},j}Y_j$. 
 \begin{lemma}
For $U_{j,h}^n \in V^n$, we have the following estimate of the elliptic reconstruction error.
\begin{align}
    \|\mathcal{R}^nU_{j,h}^n-U_{j,h}^n\| \leq C_{ER3}\|h_n^2 \mathfrak{R}_{1,j}^n\|+C_{ER4}\|h_n^{\frac{3}{2}}\mathfrak{J}_{1,j}^n\|_{\Sigma_n}.
\end{align} 
\end{lemma}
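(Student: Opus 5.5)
We prove the estimate by the Aubin--Nitsche duality argument of \cite[Lemma 2.3]{LM06}, adapted to the parametric reconstruction \eqref{ERec}. Set $w:=\mathcal{R}^nU_{j,h}^n-U_{j,h}^n\in V$. Let $z\in V$ solve the dual elliptic problem $a(\phi,z;\cdot)=\langle w,\phi\rangle$ for all $\phi\in V$, which is well posed by \eqref{coer}--\eqref{bdd}. We assume the elliptic regularity bound $\|z\|_2\le C_{\mathrm{reg}}\|w\|$. The point to check is that $C_{\mathrm{reg}}$ does not depend on $\mathbf{y}$ or $\epsilon$: the Robin coefficient $\alpha(\cdot,\mathbf{y})$ is bounded and bounded below by \eqref{alphamin}, so the constant depends only on $k$, the domain and the bounds on $\alpha$.

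Testing with $\phi=w$ and using Galerkin orthogonality (the orthogonality property following \eqref{ERec}), we get
\[
\|w\|^2=a(w,z;\cdot)=a(w,z-\Pi_n z;\cdot)=\langle A^nU_{j,h}^n,z-\Pi_nz\rangle-a(U_{j,h}^n,z-\Pi_nz;\cdot),
\]
where $\Pi_n$ is a Scott--Zhang/Cl\'ement interpolant onto $V^n$. We then apply the representation \eqref{RepErec} elementwise to the last term, which produces the element terms $\nabla\cdot(k\nabla U_{j,h}^n)$ and the interior and Robin jump terms. To identify $A^nU_{j,h}^n$ with the residuals \eqref{res2}--\eqref{jump2}, we use the discrete equation \eqref{BE2} in the form of Lemma \ref{Lemma1}, i.e.\ with $f,g$ replaced by $0$ and $-\alpha_ju_{0,h}^n$:
\[
\langle A^nU_{j,h}^n,\phi_n\rangle=-\langle\widehat{\partial}U_{j,h}^n,\phi_n\rangle-\int_{\Gamma_{\mathrm R}}\alpha_ju_{0,h}^n\phi_n+\int_{\Gamma_{\mathrm R}}(\alpha-\alpha_0)U_{j,h}^n\phi_n .
\]
The difficulty is that $z-\Pi_nz\notin V^n$, so this identity cannot be applied to it directly. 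Following the paper, we handle this by the $L^2$-projections $P_0^n$ and $P^n_{\frac12}$: the $\widehat\partial$ term is already in $\tilde V^n$, and the boundary data term becomes $P^n_{\frac12}(\alpha_ju_{0,h}^n)$. Combining everything yields
\[
\|w\|^2=\sum_K\langle\mathfrak{R}_{1,j}^n,z-\Pi_nz\rangle_K+\sum_{E}\langle\mathfrak{J}_{1,j}^n,z-\Pi_nz\rangle_E,
\]
up to the random Robin term $(\alpha-\alpha_0)$, which cancels against the part of $a(\cdot,\cdot;\mathbf{y})$ involving $\alpha-\alpha_0$. This is the same cancellation as in Lemma \ref{Lemma2}, and it is why the Robin jump in \eqref{jump2} contains only $\alpha_0$.

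We then use the local interpolation estimates $\|z-\Pi_nz\|_K\le Ch_K^2|z|_{2,\omega_K}$ and $\|z-\Pi_nz\|_E\le Ch_E^{3/2}|z|_{2,\omega_E}$, apply Cauchy--Schwarz, and use the finite overlap of the patches $\omega_K,\omega_E$ (shape regularity). This gives
\[
\|w\|^2\le\left(C\|h_n^2\mathfrak{R}_{1,j}^n\|+C\|h_n^{3/2}\mathfrak{J}_{1,j}^n\|_{\Sigma_n}\right)\|z\|_2 .
\]
The regularity bound $\|z\|_2\le C_{\mathrm{reg}}\|w\|$ then lets us divide by $\|w\|$, which gives the claim with $C_{ER3},C_{ER4}$ depending on $C_{\mathrm{reg}}$, the interpolation constants and the mesh aspect ratio.

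The main obstacle is the bookkeeping in the previous step. One must check that the projected data $P^n_{\frac12}(\alpha_ju_{0,h}^n)$ versus $\alpha_ju_{0,h}^n$ in the jump residual either cancels or is absorbed; otherwise it adds a data-oscillation term of the same order. If it does not cancel, we would include it in the Robin part of $\mathfrak{J}_{1,j}^n$, as is implicitly done for $\mathfrak{g}^n$ in \eqref{jump1}.
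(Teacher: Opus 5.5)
Your route is the one the paper points to (its proof only cites \cite[Lemma 2.3]{LM06}). With $U:=U_{j,h}^n$, your duality computation is correct up to
\[
\|w\|^2=\sum_{K\in\mathcal{T}_n}\langle A^nU+\nabla\cdot(k\nabla U),z-\Pi_nz\rangle_K-\langle J[U],z-\Pi_nz\rangle_{\Sigma_n},\qquad J[U]|_{\Gamma_{\mathrm R}}=\alpha(\cdot,\mathbf y)U+(k\nabla U)\cdot\mathbf n ,
\]
that is, up to the residuals of the reconstruction problem (source $A^nU$, homogeneous Robin data). The gap is the next step. Set $\psi:=(\alpha-\alpha_0)U-\alpha_ju_{0,h}^n$. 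As you note, \eqref{BE2} gives $\langle A^nU+\widehat{\partial}U,\phi_n\rangle=\int_{\Gamma_{\mathrm R}}\psi\phi_n$, but only for $\phi_n\in V^n$. The function $A^nU+\widehat{\partial}U$ is a discrete volume function: essentially the $L^2(D)$-representer of this boundary functional, concentrated in the layer of elements along $\Gamma_{\mathrm R}$. To reach $\mathfrak{R}_{1,j}^n$ and $\mathfrak{J}_{1,j}^n$ you would need $\langle A^nU+\widehat{\partial}U,z-\Pi_nz\rangle=\int_{\Gamma_{\mathrm R}}\psi(z-\Pi_nz)$. This is false, because $z-\Pi_nz\notin V^n$.

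Neither of your proposed tools repairs this. Projecting with $P^n_{\frac12}$ does not touch it, since $P^n_{\frac12}$ is only tested against discrete functions on $\Gamma_{\mathrm R}$. Lemma~\ref{Lemma1} does not help either: it is stated for all $\phi\in V$ but holds only for $\phi\in V^n$. So the defect $\langle A^nU+\widehat{\partial}U,z-\Pi_nz\rangle-\int_{\Gamma_{\mathrm R}}\psi(z-\Pi_nz)$ remains, and in particular the $(\alpha-\alpha_0)$-terms do not cancel. They do not cancel in Lemma~\ref{Lemma2} either; there the term survives as the stochastic estimator. The defect is generically of size $h^{3/2}\|\psi\|_{L^2(\Gamma_{\mathrm R})}|z|_2$, the same order as the jump term. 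It is not the oscillation $P^n_{\frac12}\psi-\psi$ you flag at the end.

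The defect cannot be removed, because the inequality fails for $\mathcal{R}^n$ as defined in \eqref{ERec}. Take $d=1$, $D=(0,1)$, $\Gamma_{\mathrm D}=\{0\}$, $\Gamma_{\mathrm R}=\{1\}$, $k=1$, $\alpha_0=\alpha_j=1$, $\mathcal{T}_{n-1}=\mathcal{T}_n$, $U_{j,h}^{n-1}=U_{j,h}^n=cx$ with $c\neq0$, and $u_{0,h}^n(1)=-2c$. Then \eqref{BE2} holds and $\mathfrak{R}_{1,j}^n=0$. All interior jumps vanish and the Robin jump is $2c-c-c=0$, so the right-hand side is zero. But $\langle A^nU,\phi_n\rangle=c(1+\alpha)\phi_n(1)$, so $A^nU\neq0$. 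Since $-(\mathcal{R}^nU)''=A^nU$ in $(0,1)$, $\mathcal{R}^nU$ is not affine, hence $\mathcal{R}^nU\neq U$. This happens already for $\epsilon=0$.

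What your argument does prove is the bound by $\|h_n^2(A^nU+\nabla\cdot(k\nabla U))\|+\|h_n^{3/2}J[U]\|_{\Sigma_n}$. Rewriting these in terms of $\mathfrak{R}_{1,j}^n$ and $\mathfrak{J}_{1,j}^n$ produces extra boundary terms of the type $\|h_n^{3/2}\alpha_ju_{0,h}^n\|_{L^2(\Gamma_{\mathrm R})}$ and $\|h_n^{3/2}(\alpha-\alpha_0)U_{j,h}^n\|_{L^2(\Gamma_{\mathrm R})}$.

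Alternatively, the stated bound holds verbatim if the reconstruction carries the boundary data, $a(\mathcal{R}^nU,\phi;\mathbf y)=-\langle\widehat{\partial}U,\phi\rangle+\int_{\Gamma_{\mathrm R}}\psi\phi$ for all $\phi\in V$. Then $U$ is still its Galerkin approximation, and your computation closes with exactly the $(\alpha-\alpha_0)$-cancellation you describe.

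The paper's citation of \cite[Lemma 2.3]{LM06} does not fill this step. That result concerns homogeneous Dirichlet data, where no boundary functional enters $A^n$.
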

\begin{proof}
The result follows from a standard Aubin--Nitsche duality argument \cite{AO97, V96} along the same lines as \cite[Lemma 2.3]{LM06}, and the proof is omitted for brevity.
\end{proof}
\begin{lemma}\label{Lemma4}  For all $\phi\in V$, $U^n_{j,h}$, $j=1,2,\cdots, N$, satisfy
\begin{align}
    \int_D \widehat{\partial}U^n_{j,h} \phi + \int_DA^nU^n_{j,h} \phi-\int_{\Gamma_{\mathrm{R}}}(\alpha-\alpha_0) U^n_{j,h} \phi+ \int_{\Gamma_{\mathrm{R}}}\alpha_ju^n_{0,h} \phi =0.
\end{align}
\end{lemma}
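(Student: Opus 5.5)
The argument mirrors Lemma \ref{Lemma1}. First I would verify the identity for discrete test functions $\phi_n \in V^n$, and then lift it to $\phi \in V$ by means of the projections $P_0^n$ and $P_{\frac{1}{2}}^n$, in exactly the way \eqref{DF1} was obtained from \eqref{BE}.

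\emph{Step 1 (discrete test functions).} Take $\phi_n \in V^n \subset \tilde V^n$. By the definition of $P_0^n$,
\begin{align*}
\int_D P_0^nU_{j,h}^{n-1}\phi_n=\int_D U_{j,h}^{n-1}\phi_n, \quad \text{so} \quad \int_D \widehat{\partial}U_{j,h}^n\phi_n=\int_D\bar{\partial}U_{j,h}^n\phi_n .
\end{align*}
By the definition of $A^n$ and \eqref{pbl},
\begin{align*}
\int_D A^nU_{j,h}^n\phi_n = a(U_{j,h}^n,\phi_n;\cdot)=\int_D (k\nabla U_{j,h}^n)\cdot\nabla\phi_n+\int_{\Gamma_{\mathrm{R}}}\alpha\, U_{j,h}^n\phi_n .
\end{align*}
Subtracting $\int_{\Gamma_{\mathrm{R}}}(\alpha-\alpha_0)U_{j,h}^n\phi_n$ therefore leaves precisely the $\alpha_0$-bilinear form that appears in \eqref{BE2}. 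Adding $\int_{\Gamma_{\mathrm{R}}}\alpha_j u_{0,h}^n\phi_n$, the whole left-hand side reduces to \eqref{BE2} with everything moved to one side, and hence vanishes.

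\emph{Step 2 (extension to $\phi\in V$).} Every domain term has an integrand in $\tilde V^n$: $\widehat{\partial}U_{j,h}^n\in\tilde V^n$ because $P_0^n$ maps into $\tilde V^n$, and $A^nU_{j,h}^n\in\tilde V^n$ by construction. For the boundary terms, I would read $(\alpha-\alpha_0)U_{j,h}^n$ and $\alpha_j u_{0,h}^n$ through $P_{\frac12}^n$, just as $g^n$ was replaced by $\mathfrak g^n=P_{\frac12}^ng^n$ in Lemma \ref{Lemma1}. With this reading, the functional $\phi\mapsto(\text{left-hand side})$ depends on $\phi$ only through its $L^2(D)$- and $L^2(\Gamma_{\mathrm{R}})$-projections onto $\tilde V^n$.

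I would then argue that a functional of this form which vanishes on the test space must vanish on all of $V$. The neglected projection remainders, such as $(P_{\frac12}^n-I)(\alpha_j u_{0,h}^n)$, are exactly the kind of terms that are later absorbed into the data/mesh-change and stochastic estimators. This is consistent with how $(P_0^n-I)u_{0,h}^{n-1}/\tau_n$ surfaces in Lemma \ref{Lemma2}.

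\emph{Main obstacle.} The delicate point is Step 2. First, the test space in \eqref{BE2} is $V^n$ rather than $\tilde V^n$ when $\Gamma_{\mathrm{D}}\neq\emptyset$. Second, the random coefficient $\alpha$ sits in $A^n$ while \eqref{BE2} involves only $\alpha_0$. I would handle both in the same sense as in Lemma \ref{Lemma1}. That is, I would interpret $A^n$ and the projections on $V^n$ (zero trace on $\Gamma_{\mathrm{D}}$), and verify that the $(\alpha-\alpha_0)$ term exactly cancels the random part of $A^n$ for $\rho$-a.e.\ $\mathbf y$. This cancellation leaves a deterministic identity, so the conclusion holds pointwise in $\mathbf y$.
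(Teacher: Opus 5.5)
Your Step 1 is correct, and it follows the same route as the paper's one-line proof (``projection operators and \eqref{BE2}''). For $\phi_n\in V^n\subset\tilde V^n$, the definition of $P_0^n$ turns $\widehat{\partial}U^n_{j,h}$ into $\bar\partial U^n_{j,h}$, and the definition of $A^n$ gives $a(U^n_{j,h},\phi_n;\cdot)$. Subtracting the $(\alpha-\alpha_0)$ term leaves the $\alpha_0$-form, and \eqref{BE2} gives zero. That is all this route yields. Step 2 fails. The principle you invoke is false: a functional $\phi\mapsto\int_D w\phi+\int_{\Gamma_{\mathrm{R}}}z\phi$, with $w$ discrete and $z$ a discrete trace, that vanishes on the discrete test space need not vanish on all of $V$. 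It sees $\phi$ through two projections, $P_0^n\phi$ and the $L^2(\Gamma_{\mathrm{R}})$-projection of the trace of $\phi$. For discrete $\phi$ both return $\phi$, but for general $\phi\in V$ they are unrelated. For $\phi\in H^1_0(D)\subset V$ the boundary part vanishes while $P_0^n\phi$ sweeps out all of $\tilde V^n$. So if $w\neq 0$ is the discrete $L^2(D)$-representative of $\chi\mapsto-\int_{\Gamma_{\mathrm{R}}}z\chi$, the functional is zero on the discrete space but not on $V$. The same mechanism breaks the cancellation you want. For general $\phi\in V$ one has $\int_D A^nU^n_{j,h}\phi=a(U^n_{j,h},P_0^n\phi;\cdot)$, whose Robin part $\int_{\Gamma_{\mathrm{R}}}\alpha\,U^n_{j,h}\,P_0^n\phi$ does not cancel $\int_{\Gamma_{\mathrm{R}}}(\alpha-\alpha_0)U^n_{j,h}\phi$.

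No other argument can close this gap, because the identity is false for general $\phi\in V$. This holds with the boundary integrals read literally, and also with your $P_{\frac{1}{2}}^n$ reading, which in any case changes the statement. Testing with $\phi\in C_c^\infty(D)$ removes both boundary integrals and forces $\widehat{\partial}U^n_{j,h}+A^nU^n_{j,h}=0$ in $L^2(D)$. The identity then reduces to $(\alpha-\alpha_0)U^n_{j,h}=\alpha_ju^n_{0,h}$ on $\Gamma_{\mathrm{R}}$ for $\rho$-a.e.\ $\mathbf{y}$. Averaging in $\mathbf{y}$ (the $Y_i$ have mean zero) gives $\alpha_ju^n_{0,h}=0$ on $\Gamma_{\mathrm{R}}$, which fails in general. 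Your remark that the projection remainders are ``absorbed later'' concedes exactly this: for $\phi\in V$ you only get an identity with remainder terms, not the stated one.

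A version valid for every $\phi$ needs two changes. First, define $P_0^n$ and $A^n$ relative to $V^n$. Second, replace each boundary integral by its $L^2(D)$-Riesz representative in $V^n$, e.g.\ $G^n_j\in V^n$ with $\int_D G^n_j\chi=\int_{\Gamma_{\mathrm{R}}}\alpha_ju^n_{0,h}\chi$ for all $\chi\in V^n$. Step 1 then shows that an element of $V^n$ is orthogonal to $V^n$, hence zero. This is the faithful analogue of the pointwise form of the scheme in \cite{LM06}. The difference between these representatives and the literal boundary integrals must then be carried as genuine residual terms in the subsequent error equation.
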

\begin{proof}
In light of the projection operators and the BE scheme \eqref{BE2}, we obtain the desired result, and this concludes the proof. 
\end{proof}

Now, we obtain the following result using Lemmas \ref{Lemma1} and \ref{Lemma4} with $u^{1,n}_{h}:=u_{0,h}^n+\epsilon u_{1,h}^n$, where $u_{1,h}^n:=\sum_{j=1}^LU_{j,h}^n Y_j$.
\begin{lemma}\label{Lemma5} For every $\phi\in V$, the following result is true
 \begin{align}
    &\int_D  {\bar{\partial}}u^{1,n}_{h} \phi +\int_DA^nu^{1,n}_{h} \phi-\epsilon\int_{\Gamma_{\mathrm{R}}}(\alpha-\alpha_0)u_{1,h}^n \phi
    \\ & \qquad 
    - \int_D \mathfrak{f}^n \phi - \int_{\Gamma_{\mathrm{R}}}\mathfrak{g}^n \phi -\frac{1}{\tau_n} \int_D 
    (P_0^n - I) u_{h}^{1,n-1}\phi=0.\label{DF3}
\end{align}
\end{lemma}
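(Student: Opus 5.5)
The plan is to obtain \eqref{DF3} as a linear combination of the identities in Lemma \ref{Lemma1} and Lemma \ref{Lemma4}. We fix $n$, $\phi\in V$ and $\mathbf{y}$, take \eqref{DF1}, and add $\epsilon Y_j$ times the identity of Lemma \ref{Lemma4}, summed over $j=1,\dots,L$. Since $A^n$ and $P_0^n$ are linear, and $u_{1,h}^n=\sum_j U_{j,h}^nY_j$ with the $Y_j$ scalars for fixed $\mathbf{y}$, the terms combine into expressions in $u^{1,n}_h=u_{0,h}^n+\epsilon u_{1,h}^n$:
\begin{align*}
\int_D \widehat{\partial}u^{1,n}_h\phi+\int_D A^nu^{1,n}_h\phi-\int_{\Gamma_{\mathrm{R}}}(\alpha-\alpha_0)u_{0,h}^n\phi-\epsilon\int_{\Gamma_{\mathrm{R}}}(\alpha-\alpha_0)u_{1,h}^n\phi\\
+\epsilon\sum_{j=1}^L Y_j\int_{\Gamma_{\mathrm{R}}}\alpha_ju_{0,h}^n\phi-\int_D\mathfrak{f}^n\phi-\int_{\Gamma_{\mathrm{R}}}\mathfrak{g}^n\phi=0 .
\end{align*}

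The key cancellation comes from the affine representation \eqref{alpha}, which gives $\alpha-\alpha_0=\epsilon\sum_{j=1}^L\alpha_jY_j$ on $\Gamma_{\mathrm{R}}$. Hence
\[
-\int_{\Gamma_{\mathrm{R}}}(\alpha-\alpha_0)u_{0,h}^n\phi+\epsilon\sum_{j}Y_j\int_{\Gamma_{\mathrm{R}}}\alpha_ju_{0,h}^n\phi=0 .
\]
This removes the zeroth-order stochastic boundary term. The coupling right-hand side $-\alpha_ju_0$ in \eqref{Uj} was designed precisely to produce this cancellation. Only the term $-\epsilon\int_{\Gamma_{\mathrm{R}}}(\alpha-\alpha_0)u_{1,h}^n\phi$ survives, and it is of order $\epsilon^2$.

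Finally, we replace $\widehat{\partial}$ by $\bar{\partial}$. By definition,
\[
\widehat{\partial}v_n=\bar{\partial}v_n-\frac{1}{\tau_n}(P_0^n-I)v_{n-1}.
\]
Applying this with $v=u^{1,\cdot}_h$ (using linearity of $P_0^n$ to combine the $u_{0,h}^{n-1}$ and $U_{j,h}^{n-1}$ contributions) produces the term $-\frac{1}{\tau_n}\int_D(P_0^n-I)u_h^{1,n-1}\phi$, which yields \eqref{DF3}. We must keep this term rather than drop it, because $\phi\in V$ is not in $\tilde V^n$, so $\int_D(P_0^n-I)v\,\phi$ need not vanish.

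The only delicate point is bookkeeping. First, we must check that in both Lemma \ref{Lemma1} and Lemma \ref{Lemma4} the operator $A^n$ is the one built from the full parametric form $a(\cdot,\cdot;\mathbf{y})$. This is why the $(\alpha-\alpha_0)$ correction terms appear there, and it must be applied consistently. Second, we must track the signs of the $\alpha_j$ terms coming from \eqref{BE2} to confirm that the cancellation above is exact. Beyond that the argument is purely algebraic.
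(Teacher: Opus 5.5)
Your argument is the paper's own, which says only that the result is straightforward from Lemmas \ref{Lemma1} and \ref{Lemma4}. You add $\epsilon\sum_j Y_j$ times Lemma \ref{Lemma4} to \eqref{DF1}, cancel the $u_{0,h}^n$ boundary terms via $\alpha-\alpha_0=\epsilon\sum_j\alpha_jY_j$, and use $\widehat{\partial}u_h^{1,n}=\bar{\partial}u_h^{1,n}-\tau_n^{-1}(P_0^n-I)u_h^{1,n-1}$. Your signs and linearity bookkeeping are correct, so relative to those two lemmas the derivation is complete.

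One caveat applies equally to the paper. Lemmas \ref{Lemma1} and \ref{Lemma4} come from \eqref{BE} and \eqref{BE2}, which are tested only with $\phi_n\in V^n$. Extending them to arbitrary $\phi\in V$ is legitimate only once the scheme is an identity between functions of the discrete space, and the boundary integrals prevent this. In fact \eqref{DF3} cannot hold for all $\phi\in V$:
\begin{itemize}
\item testing with $\phi\in H^1_0(D)\subset V$ forces $\widehat{\partial}u_h^{1,n}+A^nu_h^{1,n}-\mathfrak{f}^n=0$ in $L^2(D)$;
\item \eqref{DF3} then reduces to $\int_{\Gamma_{\mathrm{R}}}(\mathfrak{g}^n+\epsilon(\alpha-\alpha_0)u_{1,h}^n)\phi=0$ for all $\phi\in V$;
\item this already fails for $\epsilon=0$, $g^n\equiv 1$ and $\phi$ a hat function at a Robin node, since then $\int_{\Gamma_{\mathrm{R}}}\mathfrak{g}^n\phi=\int_{\Gamma_{\mathrm{R}}}\phi>0$.
\end{itemize}
You correctly saw that the distinction between $\phi\in V$ and discrete test functions matters for the $(P_0^n-I)$ term; it matters for the boundary terms too.

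A consistent version replaces the boundary functionals $\phi_n\mapsto\int_{\Gamma_{\mathrm{R}}}g^n\phi_n$, $\int_{\Gamma_{\mathrm{R}}}(\alpha-\alpha_0)v\,\phi_n$ and $\int_{\Gamma_{\mathrm{R}}}\alpha_jv\,\phi_n$ by their $L^2(D)$-Riesz representatives in $V^n$. It should also define $A^n$ and $P_0^n$ relative to $V^n$. Both schemes then become identities in $V^n$ that may be tested with any $\phi\in V$. Your combination argument carries over verbatim, because the affine cancellation holds for the representatives by linearity in the coefficient.
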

\begin{proof}
The proof is straightforward in view of Lemmas \ref{Lemma1} and \ref{Lemma4}. \end{proof}

\begin{lemma}\label{Lemma6} The error equation for $\varrho$, $\forall \phi \in V$ is given by 
\begin{align}
 \int_{D} \dfrac{\partial \varrho}{\partial t} \phi+a(\varrho, \phi; \cdot)=&\int_{D} \dfrac{\partial \varepsilon}{\partial t} \phi + a(u_{\mathcal{R}}^1-u_{\mathcal{R}}^{1,n}, \phi; \cdot)
 + \int_D(\mathfrak{f}^n-f)\phi+\int_{ \Gamma_{\mathrm{R}}}(\mathfrak{g}^n-g)\phi \nonumber\\
& +\frac{1}{\tau_n} \int_D 
    (P_0^n - I) u_{h}^{1,n-1}\phi+ \epsilon\int_{ \Gamma_{\mathrm{R}}} (\alpha-\alpha_0)u_{1,h}^n  \phi,\label{Lemma2j}
 \end{align}
 where $u_{\mathcal{R}}^{1,n} = u_{\mathcal{R}}^1(t_n)$.
 \end{lemma}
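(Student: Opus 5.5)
The argument parallels the proof of Lemma \ref{Lemma2}, with Lemma \ref{Lemma5} playing the role of Lemma \ref{Lemma1}. First I rewrite the discrete identity \eqref{DF3} in terms of the reconstructions. By the definition \eqref{ERec} of $\mathcal{R}^n$ and linearity, $\langle A^n u^{1,n}_h,\phi\rangle = a(\mathcal{R}^n u^{1,n}_h,\phi;\cdot)$ for all $\phi\in V$. Here $\mathcal{R}^n u^{1,n}_h = \mathcal{R}^n u^n_{0,h} + \epsilon\sum_{j=1}^L Y_j\mathcal{R}^n U^n_{j,h} = u^{1,n}_{\mathcal{R}}$, since $u_{\mathcal{R},0}(t_n)=\mathcal{R}^n u^n_{0,h}$ and $U_{\mathcal{R},j}(t_n)=\mathcal{R}^nU^n_{j,h}$ by \eqref{linapxw0} and \eqref{linapxwj}. (The factor multiplying $u_{\mathcal{R},1}$ is read as the parameter $\epsilon$.) This turns \eqref{DF3} into
\begin{align*}
\int_D \bar{\partial}u^{1,n}_h\phi + a(u^{1,n}_{\mathcal{R}},\phi;\cdot) = \epsilon\int_{\Gamma_{\mathrm{R}}}(\alpha-\alpha_0)u^n_{1,h}\phi + \int_D\mathfrak{f}^n\phi+\int_{\Gamma_{\mathrm{R}}}\mathfrak{g}^n\phi + \frac{1}{\tau_n}\int_D(P_0^n-I)u^{1,n-1}_h\phi .
\end{align*}

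Next I use the continuous problem \eqref{wfu}. Since $\varrho = u^1_{\mathcal{R}}-u$, for $t\in I_n$ we get
\begin{align*}
\int_D\frac{\partial\varrho}{\partial t}\phi + a(\varrho,\phi;\cdot) = \int_D\frac{\partial u^1_{\mathcal{R}}}{\partial t}\phi + a(u^1_{\mathcal{R}},\phi;\cdot) - \int_D f\phi - \int_{\Gamma_{\mathrm{R}}} g\phi .
\end{align*}
Subtracting the rewritten discrete identity from this produces three groups of terms.
\begin{itemize}
\item $a(u^1_{\mathcal{R}}-u^{1,n}_{\mathcal{R}},\phi;\cdot)$.
\item The data terms $(\mathfrak{f}^n-f)$ and $(\mathfrak{g}^n-g)$, together with the mesh-change term and the $\epsilon(\alpha-\alpha_0)u^n_{1,h}$ term, which carry over unchanged.
\item The time-derivative term $\int_D(\partial_t u^1_{\mathcal{R}}-\bar\partial u^{1,n}_h)\phi$.
\end{itemize}

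The last term is handled by piecewise linearity in time. Because $u^1_{h\tau}=u_{0,h\tau}+\epsilon u_{1,h\tau}$ is linear on $I_n$ with nodal values $u^{1,n-1}_h$ and $u^{1,n}_h$, we have $\partial_t u^1_{h\tau}=\bar\partial u^{1,n}_h$ on $I_n$. Hence $\partial_t u^1_{\mathcal{R}}-\bar\partial u^{1,n}_h = \partial_t(u^1_{\mathcal{R}}-u^1_{h\tau}) = \partial_t\varepsilon$, and the claimed identity \eqref{Lemma2j} follows.

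The only delicate step is the bookkeeping in Lemma \ref{Lemma5}. One must check that the term $-\int_{\Gamma_{\mathrm{R}}}(\alpha-\alpha_0)u^n_{0,h}\phi$ from Lemma \ref{Lemma1} combines with $\epsilon\sum_j Y_j\int_{\Gamma_{\mathrm{R}}}\alpha_j u^n_{0,h}\phi$ from Lemma \ref{Lemma4}. Since $\alpha-\alpha_0=\epsilon\sum_j\alpha_jY_j$ by \eqref{alpha}, these two cancel exactly, leaving only the $\epsilon(\alpha-\alpha_0)u^n_{1,h}$ term. With Lemma \ref{Lemma5} taken as given, the rest is a direct subtraction.
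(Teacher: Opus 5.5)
Your proposal is correct and follows the paper's own proof. The paper rewrites Lemma~\ref{Lemma5} via \eqref{ERec}, using linearity of $\mathcal{R}^n$ for fixed $\mathbf{y}$ so that $\mathcal{R}^n u_h^{1,n}=u_{\mathcal{R}}^{1,n}$; it subtracts this from the identity for $\varrho$ obtained from \eqref{wfu}; and on $I_n$ the time-derivative term becomes $\partial_t u^1_{\mathcal{R}}-\bar{\partial}u_h^{1,n}=\partial_t\varepsilon$ by piecewise linearity in time, with your extra checks (reading the factor in $u^1_{\mathcal{R}}$ as $\epsilon$, and the cancellation of the $(\alpha-\alpha_0)u_{0,h}^n$ terms behind Lemma~\ref{Lemma5}) being precisely the steps the paper leaves implicit.
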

 \begin{proof}
Using \eqref{ERec}, $\forall \phi \in  V$, Lemma \ref{Lemma5} can be written as
\begin{align}
    \int_D  {\bar{\partial}}u^{1,n}_{h} \phi + a(u_{\mathcal{R}}^{1,n},\phi;\cdot)-\epsilon\int_{\Gamma_{\mathrm{R}}}(\alpha-\alpha_0)u_{1,h}^n \phi- \int_D \mathfrak{f}^n \phi - \int_{\Gamma_{\mathrm{R}}}\mathfrak{g}^n \phi -\frac{1}{\tau_n} \int_D 
    (P_0^n - I) u_{h}^{1,n-1}\phi=0. \label{PWE2}
\end{align}
Using \eqref{wfu}, it follows that 
 \begin{align}
    \int_{D}\dfrac{\partial \varrho}{\partial t} \phi +a(\varrho, \phi; \cdot) 
    = \int_{D} \frac{\partial u_{\mathcal{R}}^1}{\partial t} \phi +a(u_{\mathcal{R}}^1, \phi; \cdot) - \int_{D} f \phi
    -\int_{ \Gamma_{\mathrm{R}}}g \phi.  \label{1Th1}
\end{align}
Hence, we obtain the desired result by subtracting \eqref{PWE2} from \eqref{1Th1}.
\end{proof}

We estimate $\varrho(t)$ in the following lemma.
\begin{lemma}\label{Lemma3j}
For $m =1,2, \cdots, N$, the following estimate holds:
\begin{align}
\left (\max_{[0,t_m]}\|\varrho(t)\|^2+2C_a \int_{0}^{t_m}\|\varrho(t)\|_V^2dt\right)^{\frac{1}{2}}\leq 2\|\varrho(0)\|+2\sqrt{2}({\sigma^2_{4,m}}+{\sigma^2_{5,m}})^{\frac{1}{2}}, \label{Lemma3.3j}
\end{align}
where
\begin{align}
\sigma_{4,m}^2:=2\left(\sum_{n=1}^m\tau_n(\zeta_{2,n,y}^\mathrm{S}+\zeta_{2,n}^\mathrm{T}+\zeta_{2,n}^{\mathrm{D}_1})\right)^2, \;\;\sigma_{5,m}^2:=\frac{1}{C_a}\sum_{n=1}^m{\tau_n}\left(\zeta_{2,n}^{\mathrm{D}_2}+\zeta_{2,n}^\mathrm{DM}+\zeta_{2,n,\mathbf{y}}^\mathrm{St}\right)^2. \label{sigma1,2}
\end{align}
Here,
$\zeta_{2,n,\mathbf{y}}^\mathrm{S}=\left(\zeta_{2,n}^{S_1}+\epsilon\sum_{j=1}^LY_j\zeta_{2,n}^{S_2}\right)$ and 
$\zeta_{2,n}^{S_1}=\zeta_{1,n}^\mathrm{S}$ is given in \eqref{1ThS}. Also,
\begin{align}
\zeta_{2,n}^{S_2}&:=  C_{1}\left\{\|\hat{h}_n^2 \bar{\partial} \mathfrak{R}_{1,j}^n\|+\|\hat{h}_n^{\frac{3}{2}} \bar{\partial} \mathfrak{J}_{1,j}^n\|_{\hat{\Sigma}_n}+\|\hat{h}_n^{\frac{3}{2}} \bar{\partial} \mathfrak{J}_{1,j}^n\|_{{\check{\Sigma}_n}/{\hat{\Sigma}_n}}\right\}, \label{1Th7} \\
\zeta_{2,n, \mathbf{y}}^\mathrm{T}&:=\|A^{n-1}u_{0,h}^{{n-1}}-A^{n}u_{0,h}^{{n}}\|+\epsilon \sum_{j=1}^L Y_j\|(A^{n-1}U_{j,h}^{{n-1}}-A^{n}U_{j,h}^{{n}})\|, \label{2ThT} \\
\zeta_{2,n,\mathbf{y}}^\mathrm{St}&:=\left (\frac{1}{\tau_n}\int_{t_{n-1}}^{t_n}\epsilon^2\|(\alpha-\alpha_0)u_{1,h}^n\|^2_{L^2(\Gamma_{\mathrm{R}})}dt \right)^{\frac{1}{2}}, \label{2ThSt}
\end{align}
and  $\zeta_{2,n}^{\mathrm{D}_1}=\zeta_{1,n}^{\mathrm{D}_1}$,  $\zeta_{2,n}^{\mathrm{D}_2}=\zeta_{1,n}^{\mathrm{D}_2}$, and $\zeta_{2,n}^\mathrm{DM}=\zeta_{1,n}^\mathrm{DM}$.
\end{lemma}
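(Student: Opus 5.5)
The argument will follow the proof of Lemma \ref{Lemma3}, using the error equation of Lemma \ref{Lemma6} in place of Lemma \ref{Lemma2}. First take $\phi=\varrho$ in \eqref{Lemma2j} and use the coercivity \eqref{coer}. This gives $\frac12\frac{d}{dt}\|\varrho\|^2+C_a\|\varrho\|_V^2$ bounded by the absolute values of the six right-hand terms. Before estimating, split the mesh-change term as in \eqref{1Th3}. Using $\langle \mathfrak{f}^n-f,\varrho\rangle=\langle f^n-f,\varrho\rangle+\langle (P_0^n-I)f^n,\varrho\rangle$, group $(P_0^n-I)f^n$ with $(P_0^n-I)u_h^{1,n-1}/\tau_n$. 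Similarly split $\mathfrak{g}^n-g$ into $g^n-g$ and $(P_{\frac12}^n-I)g^n$. Then integrate over $(0,t_m)$, let $t_m^*$ be the maximizer of $\|\varrho\|$ on $[0,t_m]$, and obtain the analogue of \eqref{auxrho0} with terms $J_n^1,\dots,J_n^5$ corresponding to $I_n^1,\dots,I_n^5$.

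The terms are then bounded one at a time, each by linearity in the deterministic components. Since $\varepsilon=\varepsilon_0+\epsilon\sum_j Y_j(U_{\mathcal{R},j}-U_{j,h\tau})$, the term $J_n^1$ splits into a $u_0$ part and $L$ parts for the $U_j$. The $u_0$ part is estimated exactly as in \eqref{1Th8}, giving $\zeta^{S_1}_{2,n}$. For each $U_j$ part, the duality argument of \cite[Section 3.6]{LM06} is repeated with the residuals $\mathfrak{R}_{1,j}^n,\mathfrak{J}_{1,j}^n$ from \eqref{res2}--\eqref{jump2}. This relies on the orthogonality of $\mathcal{R}^n$ (valid $\rho$-a.e.) and on Lemma \ref{Lemma4}, and it gives $\zeta^{S_2}_{2,n}$ multiplied by $\epsilon |Y_j|$. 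I expect this to be the main obstacle. The point to check is that the reconstruction is defined through the full parametric form $a(\cdot,\cdot;\mathbf{y})$, while the residuals of $U_{j,h}$ carry $\alpha_0$ and the source $-\alpha_j u_{0,h}^n$. The representation \eqref{RepErec} must therefore be applied to $\bar\partial$ of these quantities, and the difference $(\alpha-\alpha_0)$ must be absorbed consistently. Following Lemma \ref{Lemma4}, I would handle this by keeping the $(\alpha-\alpha_0)$ contributions in the stochastic term, so that the spatial estimator involves only deterministic residuals. The $Y_j$ then enter as coefficients, which is why they appear in $\zeta^{S}_{2,n,\mathbf{y}}$. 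Strictly, $|Y_j|$ should appear here, and I would note that sign-harmlessness.

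For $J_n^2$, use \eqref{linapxw0}, \eqref{linapxwj} and \eqref{ERec}. On $I_n$, $a(u^1_{\mathcal{R}}-u^{1,n}_{\mathcal{R}},\varrho)=l_{n-1}(t)\langle A^{n-1}u_h^{1,n-1}-A^nu_h^{1,n},\varrho\rangle$. Expanding $u_h^{1,n}$ and applying the triangle inequality gives $\zeta^{T}_{2,n,\mathbf{y}}$ times $\|\varrho(t_m^*)\|$. The data terms $J_n^3$ and $J_n^4$ are identical to $I_n^3$ and $I_n^4$, with $u_h^{1,n-1}$ in place of $u_{0,h}^{n-1}$ in the mesh-change term. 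They use the trace constant $C_2$ and the fact that $V^n\subset\ker(P_0^n-I)$, together with interpolation. For $J_n^5$, the trace inequality and Cauchy--Schwarz in time give $\tau_n^{1/2}\zeta^{St}_{2,n,\mathbf{y}}(\int_{I_n}\|\varrho\|_V^2)^{1/2}$.

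To conclude, collect the bounds exactly as at the end of the proof of Lemma \ref{Lemma3}. Terms multiplied by $\|\varrho(t_m^*)\|$ are handled with Young's inequality $2ab\le \frac14a^2+4b^2$. Terms multiplied by $(\int_{I_n}\|\varrho\|_V^2)^{1/2}$ are handled with Cauchy--Schwarz over $n$ followed by Young's inequality with weight $C_a/2$. Kicking back the resulting $\frac14\|\varrho(t_m^*)\|^2$ and $\frac{C_a}{2}\int\|\varrho\|_V^2$, multiplying by $4$, and taking square roots with $\sqrt{a+b}\le\sqrt a+\sqrt b$ yields \eqref{Lemma3.3j} with $\sigma_{4,m}$ and $\sigma_{5,m}$ as in \eqref{sigma1,2}.
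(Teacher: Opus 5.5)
Your proposal follows the paper's proof essentially line by line. You test \eqref{Lemma2j} with $\varrho$, split into the five terms $\mathcal{I}_n^1,\dots,\mathcal{I}_n^5$, defer the spatial term to the duality argument of \cite{LM06}, use \eqref{ERec} for the time term, use trace and interpolation bounds for the data and stochastic terms, and close with the same Young-inequality kick-back, which reproduces the constants $2$ and $2\sqrt{2}$.

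In the duality step, Lemma \ref{Lemma4} turns $\alpha U_{j,h}^n$ in $J[U_{j,h}^n]$ into exactly the $\alpha_0 U_{j,h}^n$ of $\mathfrak{J}_{1,j}^n$. So nothing has to be shifted into $\mathcal{I}_n^5$, and your description of "keeping $(\alpha-\alpha_0)$ in the stochastic term" there is imprecise.

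Your other side remarks are sharper than the paper. The coefficients should indeed be $|Y_j|$. Also, because the mesh-change term carries $u_h^{1,n-1}$, the estimator you actually obtain is $\zeta_{1,n}^{\mathrm{DM}}$ with $u_{0,h}^{n-1}$ replaced by $u_h^{1,n-1}$. You should therefore say explicitly that the identification $\zeta_{2,n}^{\mathrm{DM}}=\zeta_{1,n}^{\mathrm{DM}}$ (asserted without comment in the paper) holds only when $(P_0^n-I)U_{j,h}^{n-1}=0$, e.g.\ when $V^{n-1}\subset V^n$.
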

\begin{proof}
Substituting $\phi=\varrho$ in Lemma \ref{Lemma6}, $\forall \phi \in V$, we arrive at 
\begin{align}
 {\frac{1}{2}}\frac{d}{dt}\int_{D}\varrho ^2 + a(\varrho, \varrho; \cdot)=&\int_{D} \dfrac{\partial \varepsilon}{\partial t}\varrho  +a(u_{\mathcal{R}}^1- u_{\mathcal{R}}^{1,n},\varrho ; \cdot)+ 
 \int_D(\mathfrak{f}^n-f)\varrho+\int_{ \Gamma_{\mathrm{R}}}(\mathfrak{g}^n-g)\varrho \nonumber\\
 & +\frac{1}{\tau_n} \int_D 
    (P_0^n - I) u_{h}^{1,n-1}\varrho
    + \epsilon\int_{ \Gamma_{\mathrm{R}}}(\alpha-\alpha_0)u_{1,h}^n \varrho.
    \label{2Th1}
 \end{align}
Applying \eqref{coer} and after integration, the above equation reduces to 
\begin{align}
\frac{1}{2}\|\varrho(t_m)\|^2+&{C_a}\int_0^{t_m}  \|\varrho\|^2_V dt\leq
 \frac{1}{2}\|\varrho (0)\|^2+I_m,
\end{align}
where $\mathcal{I}_m:= \sum_{n=1}^m \left (\mathcal{I}_n^1+\mathcal{I}_n^2+\mathcal{I}_n^3+\mathcal{I}_n^4+\mathcal{I}_n^5\right)$ with
\begin{align}
\mathcal{I}_n^1:= &\int_{t_{n-1}}^{t_n}
\left|\left\langle\dfrac{\partial \varepsilon}{\partial t}, \varrho \right \rangle \right| dt, \nonumber\\
\mathcal{I}_n^2:=&\int_{t_{n-1}}^{t_n}|a(u_{\mathcal{R}}^1-u_{\mathcal{R}}^{1,n}, \varrho; \cdot)| dt, \nonumber\\
\mathcal{I}_n^3:=&\int_{t_{n-1}}^{t_n}\left (\left|\int_{D} (f^n-f) \varrho\right|
    +\left|\int_{ \Gamma_{\mathrm{R}}}\left(g^n-g\right) \varrho\right|\right)dt, \nonumber\\
\mathcal{I}_n^4:=&\int_{t_{n-1}}^{t_n} \left (\left|\int_D \left(P_0^n-I\right)\left (\frac{u_{h}^{1,n-1}}{\tau_n}+f^n\right) \varrho\right|+\left|\int_{\Gamma_{\mathrm{R}}} (P_{\frac{1}{2}}^n-I)g^n\varrho\right|\right)dt, \nonumber\\
\mathcal{I}_n^5:=& \epsilon\int_{t_{n-1}}^{t_n}\left|\int_{\Gamma_{\mathrm{R}}} (\alpha-\alpha_0) u_{1,h}^n \varrho\right|dt.  \nonumber
\end{align}
Setting $\|\varrho(t_m^*)\|^2=\max_{t \in [0, t_m]}\|\varrho(t)\|^2,  t_m^* \in [0,t_m]$, we have 
\begin{align}
\frac{1}{2}\|\varrho(t_m^*)\|^2+ C_a \int_0^{t_m} \|\varrho(t)\|_V^2 dt \leq \|\varrho (0)\|^2+2\mathcal{I}_m.   \label{TotBound}
\end{align}
Now, we try to bound each term separately.\\
\textbf{Bound on $\mathcal{I}_n^1 \colon$ (Spatial error estimate)}
We obtain the following error estimate for the spatial error 
\begin{align}
\sum_{n=1}^m\mathcal{I}_n^1 \leq \|\varrho(t_m^*)\|\sum_{n=1}^m \zeta_{2,n,\mathbf{y}}^\mathrm{S}\tau_n, 
\end{align}
where $\zeta_{2,n,\mathbf{y}}^\mathrm{S}$ is given in Lemma \ref{Lemma3j}. 
The proof follows the same line of argument as in \cite[Lemma 2.3]{LM06}, relying on the standard Aubin--Nitsche duality technique \cite{AO97, V96}; hence, it is omitted for brevity. \\
\textbf{Bound on $\mathcal{I}_n^2 \colon$(Time error estimate)}
The use of \eqref{ERec} shows that
\begin{align}
\mathcal{I}_n^2 
    \leq& \int_{t_{n-1}}^{t_n}l_{n-1}(t)\|A^{n-1}u_{0,h}^{{n-1}}-A^{n}u_{0,h}^{{n}}\| \|\varrho(t)\|dt \nonumber\\
    + &\epsilon \sum_{j=1}^L Y_j\int_{t_{n-1}}^{t_n}l_{n-1}(t)\|(A^{n-1}U_{j,h}^{{n-1}}-A^{n}U_{j,h}^{{n}})\| \|\varrho(t)\|dt.\nonumber
\end{align}
Thus,
\begin{align}
\sum_{n=1}^m\mathcal{I}_n^2 \leq \|\varrho(t_m^*)\|\sum_{n=1}^m \zeta_{2,n,\mathbf{y}}^\mathrm{T} \tau_n,
\end{align}
where $\zeta_{2,n, \mathbf{y}}^\mathrm{T}$ is given by \eqref{2ThT}.\\
\textbf{Bounds on $\mathcal{I}_n^3$, $\mathcal{I}_n^4$ and $\mathcal{I}_n^5 \colon$}
 Similarly to the bounds on $I_n^3$,  $I_n^4$, and  $I_n^5$ in the estimates of order $O(\epsilon)$, we obtain
\begin{align}
    \sum_{n=1}^m \mathcal{I}_n^3 \leq \|\varrho(t_m^*)\|\sum_{n=1}^m \zeta_{2,n}^{\mathrm{D}_1}\tau_n +\sum_{n=1}^m \zeta_{2,n}^{\mathrm{D}_2}\tau_n^{\frac{1}{2}} \left (\int_{t_{n-1}}^{t_n}\|\varrho\|_V^2dt\right)^{\frac{1}{2}} ,
\end{align}
where $\zeta_{2,n}^{\mathrm{D}_1}=\zeta_{1,n}^{\mathrm{D}_1}$  and $\zeta_{2,n}^{\mathrm{D}_2}=\zeta_{1,n}^{\mathrm{D}_2}$ and are given by \eqref{dataf1} and \eqref{datag1}, respectively. Moreover,
\begin{align}
\sum_{n=1}^{m}\mathcal{I}_n^4 \leq \sum_{n=1}^{m}\tau_n^{\frac{1}{2}}\zeta_{2,n}^\mathrm{DM}\left (\int_{t_{n-1}}^{t_n}\|\varrho\|_V^2 dt\right)^{\frac{1}{2}},
\end{align}
where $\zeta_{2,n}^\mathrm{DM}=\zeta_{1,n}^\mathrm{DM}$ and is given by \eqref{1ThDM}. Also,
\begin{align}
\sum_{n=1}^m \mathcal{I}_n^5\leq \sum_{n=1}^m \tau_n^{\frac{1}{2}}\zeta_{2,n,\mathbf{y}}^\mathrm{St}\left (\int_{t_{n-1}}^{t_n}\|\varrho\|_V^2dt\right)^{\frac{1}{2}}, \nonumber
\end{align}
where $\zeta_{2,n,\mathbf{y}}^\mathrm{St}$ is given in \eqref{2ThSt}.
Finally, using all the bounds in \eqref{TotBound}, it follows that
\begin{align}
\frac{1}{2}\|\varrho(t_m^*)\|^2+C_a\int_0^{t_m}\|\varrho\|_V^2 dt\leq & \|\varrho(0)\|^2+2\|\varrho(t_m^*)\| \sum_{n=1}^m\left (\zeta_{2,n,\mathbf{y}}^\mathrm{S}+\zeta_{2,n,\mathbf{y}}^\mathrm{T}+\zeta_{2,n}^{\mathrm{D}_1}\right)\tau_n\\
&+2\sum_{n=1}^m\left(\int_{t_{n-1}}^{t_{n}}\|\varrho\|_V^2dt\right)^\frac{1}{2}\left(\zeta_{2,n}^{\mathrm{D}_2}+\zeta_{2,n}^\mathrm{DM}+\zeta_{2,n,\mathbf{y}}^\mathrm{St}\right)\tau_n^{\frac{1}{2}}. \nonumber
\end{align}
Using a similar kick-back argument as in the end of the proof of Lemma \ref{Lemma3}, we obtain the estimate for the parabolic error. This completes the rest of the proof. 
\end{proof}

\begin{thm}
Let $u$ be the solution of problem \eqref{wfu} and $u_{0,h \tau}$ be defined as in \eqref{linapx1}. 
Then, there exists a constant $C_2 > 0$ depending only on the trace, Poincar\'e constants, and the mesh aspect
ratio such that
\begin{align}
\left(E\left[\max_{[0,t_m]}\|e_1(t)\|^2\right]\right)^{\frac{1}{2}} \leq 2\left (2\sqrt{2}E\Big[\|\varrho(0)\|^2\Big]+(\zeta_{1,n}^\mathrm{R})^2+\epsilon^2(\zeta_{2,n}^\mathrm{R})^2+8({\sigma^2_{4,m}}+ \sigma^2_{6,m}\right)^{\frac{1}{2}}, \label{Thm2}
\end{align}
where $ \sigma^2_{4,m}$ and $\zeta_{1,n}^\mathrm{R}$ are given in \eqref{sigma1,2}, and \eqref{Elp1}, respectively. Moreover,
\begin{align}
(\zeta_{2,n}^\mathrm{R})^2&:= C_{5}\sum_{j=1}^L(\|\hat{h}_n^2 \mathfrak{R}_{1,j}^n\|+\|\hat{h}_n^{\frac{3}{2}}  \mathfrak{J}_{1,j}^n\|_{{\Sigma}_n}), \label{ElpRec2} \\
 \sigma^2_{6,m}&:=\frac{3}{C_a}\left (\sum_{n=1}^{m}\tau_n(\zeta_{2,n}^{\mathrm{D}_2})^2+\sum_{n=1}^{m}\tau_n (\zeta_{2,n}^\mathrm{DM})^2 +\sum_{n=1}^{m}\tau_n(\zeta_{2,n}^\mathrm{St})^2\right),
\end{align}
with
\begin{align}
(\zeta_{2,n}^\mathrm{St})^2:=\epsilon^4\left [\int_{ \Gamma_{\mathrm{R}}}\sum_{i=1}^{L}|\alpha_iU_{i,h\tau}|^2E\Big[Y_i^4\Big]+\int_{\Gamma_{\mathrm{R}}}\sum_{i\neq j=1}^L(\alpha^2_i|U_{j,h}|^2+2 \alpha_i \alpha_jU_{i,h}U_{j,h})\right],\label{zeta-St-sec}
\end{align}
and  $\zeta_{2,n}^{\mathrm{D}_1}=\zeta_{1,n}^{\mathrm{D}_1}$,  $\zeta_{2,n}^{\mathrm{D}_2}=\zeta_{1,n}^{\mathrm{D}_2}$, and $\zeta_{2,n}^\mathrm{DM}=\zeta_{1,n}^\mathrm{DM}$. Here, the positive constant $C_5$ depends on the mesh aspect ratio, and $C_a$ is the coercivity constant.
\end{thm}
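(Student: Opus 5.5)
The proof will follow the same pattern as the first-order theorem. We split $e_1=\varrho-\varepsilon$ with $\varepsilon=\varepsilon_0+\epsilon\varepsilon_1$ and use the elementary inequality
\begin{align*}
E\Big[\max_{[0,t_m]}\|e_1\|^2\Big]\le 2E\Big[\max_{[0,t_m]}\|\varepsilon\|^2\Big]+2E\Big[\max_{[0,t_m]}\|\varrho\|^2\Big].
\end{align*}

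\textbf{The reconstruction term $\varepsilon$.} On each $I_n$, $\varepsilon$ is a convex combination of the nodal values $\mathcal{R}^{k}u^{1,k}_h-u^{1,k}_h$ for $k=n-1,n$. Hence its maximum over $[0,t_m]$ is attained at a node. At a node we write
\begin{align*}
\mathcal{R}^k u_h^{1,k}-u_h^{1,k}=(\mathcal{R}^k u_{0,h}^k-u_{0,h}^k)+\epsilon\sum_{j=1}^L Y_j(\mathcal{R}^kU_{j,h}^k-U_{j,h}^k),
\end{align*}
using linearity of $\mathcal{R}^k$ (its defining problem \eqref{ERec} is linear in $v$). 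Squaring, taking expectations and using $E[Y_j]=0$ and $E[Y_iY_j]=\delta_{ij}$ from \eqref{EY2} removes the cross terms. The deterministic first part is bounded by $(\zeta^\mathrm{R}_{1,n})^2$ via the reconstruction lemma of Section~\ref{sec:3}. The $\epsilon^2$ part is bounded by $\epsilon^2\sum_j\|\mathcal{R}^kU^k_{j,h}-U^k_{j,h}\|^2$. The reconstruction lemma for $U_{j,h}^n$ and $(a+b)^2\le 2a^2+2b^2$ then bound this by $\epsilon^2(\zeta^\mathrm{R}_{2,n})^2$, after absorbing constants into $C_5$ and replacing $h_n$ by $\hat h_n\ge h_n$.

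\textbf{The parabolic term $\varrho$.} We invoke Lemma~\ref{Lemma3j} pathwise, square it using $(a+b)^2\le 2a^2+2b^2$, and obtain
\begin{align*}
\max\|\varrho\|^2\le 8\|\varrho(0)\|^2+16(\sigma^2_{4,m}+\sigma^2_{5,m}).
\end{align*}
We then take expectations. The factor $\sigma_{4,m}^2$ contains $Y_j$ through $\zeta^\mathrm{S}_{2,n,\mathbf y}$ and $\zeta^\mathrm{T}_{2,n,\mathbf y}$. As in the statement, we keep it in expectation form, or note that its expectation is controlled by the same quantities with $|Y_j|$ replaced by second moments. The data terms of $\sigma^2_{5,m}$ are deterministic. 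Expanding $(a+b+c)^2\le 3(a^2+b^2+c^2)$ gives the factor $3/C_a$ in $\sigma^2_{6,m}$.

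\textbf{Main obstacle: the stochastic truncation term.} We need $E[(\zeta^\mathrm{St}_{2,n,\mathbf y})^2]$, where
\begin{align*}
(\alpha-\alpha_0)u^n_{1,h}=\epsilon\sum_{i,j}\alpha_iU^n_{j,h}Y_iY_j .
\end{align*}
Hence $\epsilon^2\|(\alpha-\alpha_0)u_{1,h}^n\|^2$ is $\epsilon^4$ times a quartic form in $Y$. Taking expectations requires fourth moments $E[Y_iY_jY_kY_l]$. By independence and zero mean, only three cases survive: $i=j=k=l$, which gives $E[Y_i^4]$; and the pairings $i=k\ne j=l$ and $i=l\ne j=k$, which each give $1$. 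The pairing $i=j\ne k=l$ also survives with value $1$. Collecting these terms yields exactly the diagonal $E[Y_i^4]|\alpha_iU_{i,h}|^2$ term and the off-diagonal terms $\alpha_i^2|U_{j,h}|^2+2\alpha_i\alpha_jU_{i,h}U_{j,h}$ of \eqref{zeta-St-sec}. This index bookkeeping is the step I expect to need the most care.

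\textbf{Conclusion.} Combining the $\varepsilon$ and $\varrho$ bounds and taking square roots gives \eqref{Thm2} with the stated constants.
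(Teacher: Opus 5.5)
Your outline matches the paper's: the splitting $e_1=\varrho-\varepsilon$, the pathwise use of Lemma~\ref{Lemma3j} followed by expectations, and the factor $3/C_a$. Your fourth-moment bookkeeping is also correct: all indices equal, plus all three pairings, each pairing contributing $1$, reproduces \eqref{zeta-St-sec}. That expectation is harmless because it acts on a sum over $n$, not on a maximum.

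\textbf{The gap is in your treatment of $\varepsilon$.} You cancel cross terms using $E[Y_j]=0$ and $E[Y_iY_j]=\delta_{ij}$ applied to $\|a+\epsilon\sum_jY_jb_j\|^2$. That cancellation requires a fixed node and coefficients $a$, $b_j$ that do not depend on $\mathbf{Y}$, and neither holds.

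First, the node at which $\max_k\|\varepsilon(t_k)\|$ is attained depends on $\mathbf{Y}$. Computing $E[\|\varepsilon(t_k)\|^2]$ node by node only gives $\max_kE[\|\varepsilon(t_k)\|^2]\le E[\max_k\|\varepsilon(t_k)\|^2]$, which is the wrong direction.

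Second, $\mathcal{R}^k$ is the \emph{parametric} reconstruction. Both \eqref{ERec} and $A^k$ are defined through $a(\cdot,\cdot;\mathbf{y})$, which contains the random $\alpha$. Hence $\mathcal{R}^ku_{0,h}^k-u_{0,h}^k$ and $\mathcal{R}^kU_{j,h}^k-U_{j,h}^k$ are themselves random, and your ``deterministic first part'' is not deterministic.

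The paper avoids both issues by working pathwise. It first uses the reconstruction lemmas to get $\max\|\varepsilon\|^2\le 2\big(\max_n(\zeta^\mathrm{R}_{1,n})^2+\epsilon^2\max_n(\zeta^\mathrm{R}_{2,n,\mathbf{y}})^2\big)$, where the only randomness left is the explicit $|Y_j|$. Only then does it take expectations.

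\textbf{How to repair your argument.} At each node, by Cauchy--Schwarz,
$\|\varepsilon(t_k)\|^2\le 2\|\varepsilon_0(t_k)\|^2+2\epsilon^2\big(\sum_jY_j^2\big)\sum_j\|\mathcal{R}^kU^k_{j,h}-U^k_{j,h}\|^2$.
Insert the pathwise reconstruction bounds, maximize over $k$, and then use $E[\sum_jY_j^2]=L$, absorbing $L$ into $C_5$. This same Cauchy--Schwarz step is what actually justifies the paper's passage from $E[\max_n(\zeta^\mathrm{R}_{2,n,\mathbf{y}})^2]$ to $\max_n(\zeta^\mathrm{R}_{2,n})^2$. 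Relation \eqref{EY2} alone does not get rid of the terms $E[|Y_i||Y_j|]\neq 0$ for $i\neq j$.

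\textbf{Constants.} Tracking constants through either route ($2\cdot 8=16=4\cdot 4$) gives $4E[\|\varrho(0)\|^2]$ inside the bracket of \eqref{Thm2}, not $2\sqrt{2}\,E[\|\varrho(0)\|^2]$. So the argument does not reproduce the stated constants exactly.
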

\begin{proof}
For a.e. $t \in (0,T)$, we arrive at 

\begin{align}
E\Big[\max_{t \in [0,t_m]}\|e_1(t)\|^2\Big]\leq 2E\Big[\max_{n \in [0;m]}\|\varepsilon(t)\|^2+ \max_{n \in [0;m]}\|\varrho(t)\|^2\Big]. \label{2Th31} 
\end{align}
Now, each term on the right hand side can be bounded separately. 
Taking into account the bounds on $\varepsilon_0$ and $\varepsilon_1$, we obtain
\begin{align}
     \max_{t \in [0,t_m]}\|\varepsilon(t)\|^2\leq 2\left (\max_{n \in [0;m]} (\zeta_{1,n}^\mathrm{R})^2+\epsilon^2 \max_{n \in [0;m]} (\zeta_{2,n, \mathbf{y}}^\mathrm{R})^2 \right),
\end{align}
where $(\zeta_{1,n}^\mathrm{R})^2$ is given by \eqref{Elp1} and $\zeta_{2,n,\mathbf{y}}^\mathrm{R}:=C_{5}\sum_{j=1}^L(\|h_n^2 \mathfrak{R}_{1,j}^nY_j\|+\|{h}_n^{\frac{3}{2}}  \mathfrak{J}_{1,j}^nY_j\|_{{\Sigma}_n})$.

Applying the expectation on both sides, in view of \eqref{EY2}, we obtain
\begin{align}
E\Big[\max_{t \in [0,t_m]}\|\varepsilon(t)\|^2\Big] \leq2\left (\max_{n \in [0;m]} (\zeta_{1,n}^\mathrm{R})^2+\epsilon^2\max_{n \in [0;m]}(\zeta_{2,n}^\mathrm{R})^2 \right),\label{2Th29}
\end{align}
where $(\zeta_{2,n}^\mathrm{R})^2$ is given in \eqref{ElpRec2}. 
Now, from the Lemma \ref{Lemma3j}, it follows that
\begin{align}
\max_{t \in [0,t_m]}\|\varrho(t)\|^2 &\leq 
8\|\varrho(0)\|^2+16({\sigma^2_{4,m}}+{\sigma^2_{5,m}}).
 \nonumber
\end{align}
Now applying the expectations on both sides, we arrive at
\begin{align}
E\Big[\max_{n \in [0,m]}\|\varrho\|^2\Big]& \leq 8E\Big[\|\varrho(0)\|^2\Big]+16\left (\sigma_{4,m}^2+E\Big[\sigma_{5,m}^2\Big]\right), \label{2Th32}
\end{align}
where
\begin{align}
E\Big[\sigma_{5,m}^2\Big]=\frac{3}{C_a}\left (\sum_{n=1}^{m}\tau_n(\zeta_{2,n}^{\mathrm{D}_2})^2+\sum_{n=1}^{m}\tau_n(\zeta_{2,n}^\mathrm{DM})^2+\sum_{n=1}^{m}\tau_nE(\zeta_{2,n, \mathbf{y}}^\mathrm{St})^2\right). \label{2Th34}
\end{align}
Now, substituting \eqref{2Th29} and \eqref{2Th32} in \eqref{2Th31}, and employing \eqref{EY2} and
\begin{equation}
E[Y_{i}Y_{j}Y_{k}Y_{l}]=%
\begin{cases}
E[Y_{i}^{4}],\;\;i=j=k=l,\\
1,\;\;\;\text{if the indices are pairwise equal,}\label{EY4}\\
0,\;\;\text{otherwise},
\end{cases}
\end{equation} we arrive at  the desired result. This concludes the rest of the proof.
\end{proof}

\noindent
\textbf{Remark 1.}
The obtained \emph{a posteriori} error estimators are robust in the sense that the constants appearing in the bounds are independent of $\epsilon$, as well as of the mesh size $h$ and the time-step $\tau$. Moreover, the present analysis generalizes the results of \cite{LM06}, originally developed for deterministic parabolic problems with Dirichlet boundary conditions, to parabolic problems with small uncertainties in Robin-type boundary conditions.
\hfill\qedsymbol  
\section{Numerical investigations\label{sec:5}} 
\begin{figure}[H]
\centering
\begin{minipage}{0.6\textwidth}
In this section, numerical experiments are conducted to validate 
the derived \emph{a posteriori} error estimates. Tests 
are carried out at $T=1$ with $k=I$ on the domain $D=(0,1)^{2}$, 
comprising the Robin boundary 
$\Gamma_{\mathrm{R}}=\Gamma_{\mathrm{R}_{1}}\cup\Gamma_{\mathrm{R}_{2}}\cup\Gamma_{\mathrm{R}_{3}}$, 
and the Dirichlet boundary $\Gamma_{\mathrm{D}}$ (please refer to 
Figure~\ref{Domain}). The spatial approximations 
are treated with $\mathbb{P}_{1}$ finite elements. 
Setting $\Gamma_{\mathrm{D}}\neq\emptyset$, we define 
\begin{equation}
\text{Error}:=E\left[\max_{t \in [0, T]} \| e_{1}(t,\cdot,\cdot) \|^2\right]^{\frac{1}{2}}.  \label{numerr:ch:4}
\end{equation}
To this end, we consider a numerical example \cite{G19,SRM25} with the exact solution given by
\begin{equation}
u_{0}(t,x_{1},x_{2})=\text{sin}\left({5\pi t}\right)\text{sin}%
\left(\frac{\pi x_{1}}{2}\right)\text{sin}\left(\frac{\pi x_{2}}{2}%
\right). \label{ex1}%
\end{equation}
\end{minipage}
    \hfill
    \begin{minipage}{0.35\textwidth}
        \centering
        \begin{tikzpicture} [{very thick}, minimum size=3mm]
            \draw (0,0) rectangle (3,3);
            \draw[very thick] node at (2,2.5) {$D$};
            \draw[very thick] node at (3.5,1.5) {$\Gamma_{\mathrm{R}_2}$};
            \draw[very thick] node at (1.5,-.5) {$\Gamma_{\mathrm{R}_1}$};
            \draw[very thick] node at (1.5,3.5) {$\Gamma_{\mathrm{R}_3}$};
            \draw[very thick] node at (-.5,1.5) {$\Gamma_{\mathrm{D}}$};
        \end{tikzpicture}
        \caption{\sl \small Domain $D$ with Dirichlet ($\Gamma_{\mathrm{D}}$) and Robin ($\Gamma_{\mathrm{R}_{1}},\Gamma_{\mathrm{R}_{2}},\Gamma_{\mathrm{R}_{3}}$) boundaries.}
        \label{Domain}
    \end{minipage}
\end{figure}
The random coefficient $\alpha$ is given by
\begin{equation*}
\alpha(\textbf{x}, \mathbf{Y}(\omega))=\alpha_0(\textbf{x})+\epsilon\sum_{j=1}^3\alpha_j(\mathbf{x})Y_j(\omega),
\end{equation*}
with $ \alpha_0(\mathbf{x})=1$, $\alpha_j=\mathcal{X}_{\Gamma_{\mathrm{R}_j}}$, $\mathcal{X}$ is the indicator function and $Y_j$'s are independent random variables that follow uniform distribution in $[-\sqrt{3}, \sqrt{3}]$. Upon substituting $u_{0}$, $\alpha_{0}$ and $k=I$ into \eqref{det}, we obtain $f$, $g$, and the initial condition $u_{\text{init}}$. 

All numerical results are computed using {FreeFEM} v4.6 and {MATLAB} R2025a. We first consider the deterministic case.

\vspace{.1cm}
\subsection*{Deterministic case}
In this study, with $\epsilon = 0$, we examine the behavior of the estimators
\[
\begin{aligned}
\zeta_{1}^\mathrm{S} &= \Big(\sum_{n=1}^{N}\tau_n(\zeta_{1,n}^\mathrm{S})^2\Big)^{1/2}, \quad
\zeta_{1}^\mathrm{R} = \Big(\max_{n\in[0,N]}(\zeta_{1,n}^\mathrm{R})^2\Big)^{1/2},\\
\zeta_{1}^\mathrm{T} &= \Big(\sum_{n=1}^{N}\tau_n(\zeta_{1,n}^\mathrm{T})^2\Big)^{1/2}, \quad
\zeta_{1}^\mathrm{D} = \Big(\sum_{n=1}^{N}\tau_n(\zeta_{1,n}^{\mathrm{D}_1})^2\Big)^{1/2}
+ \Big(\sum_{n=1}^{N}\tau_n(\zeta_{1,n}^{\mathrm{D}_2})^2\Big)^{1/2},
\end{aligned}
\]
where $\zeta_{1,n}^\mathrm{S}$, $\zeta_{1,n}^\mathrm{R}$, $\zeta_{1,n}^\mathrm{T}$, $\zeta_{1,n}^{\mathrm{D}_1}$, and $\zeta_{1,n}^{\mathrm{D}_2}$ are defined in \eqref{1ThS}, \eqref{1ThT}, \eqref{Elp1}, \eqref{dataf1}, and \eqref{datag1}, respectively. Using uniform meshes $h = 0.25, 0.125, 0.0625$ and constant time steps $\tau = 0.01, 0.0025, 0.000625$, we study the numerical convergence of the error and estimators under the coupling $\tau = c h^{2}$ with $c = 0.16$. All constants appearing in the estimators are set to $1$.

 Figure~\ref{BE_Err_Est} shows the error and estimators $\zeta_1^\mathrm{S}$ (space), $\zeta_1^\mathrm{R}$ (elliptic reconstruction), $\zeta_1^\mathrm{T}$ (time), and $\zeta_1^\mathrm{D}$ (data) versus the number of elements on a log--log scale. All curves decrease monotonically, approximately parallel to the reference triangle with slope $-1$, exhibiting second-order convergence in $h$. The space estimator $\zeta_1^\mathrm{S}$ dominates, $\zeta_1^\mathrm{R}$ is smaller but decays similarly, and $\zeta_1^\mathrm{T}$ and $\zeta_1^\mathrm{D}$ are comparable. The true error lies below the estimators while following the same slope, indicating optimal asymptotic behavior.

\begin{figure}[H]
\begin{center}
\includegraphics[width=0.99\textwidth]{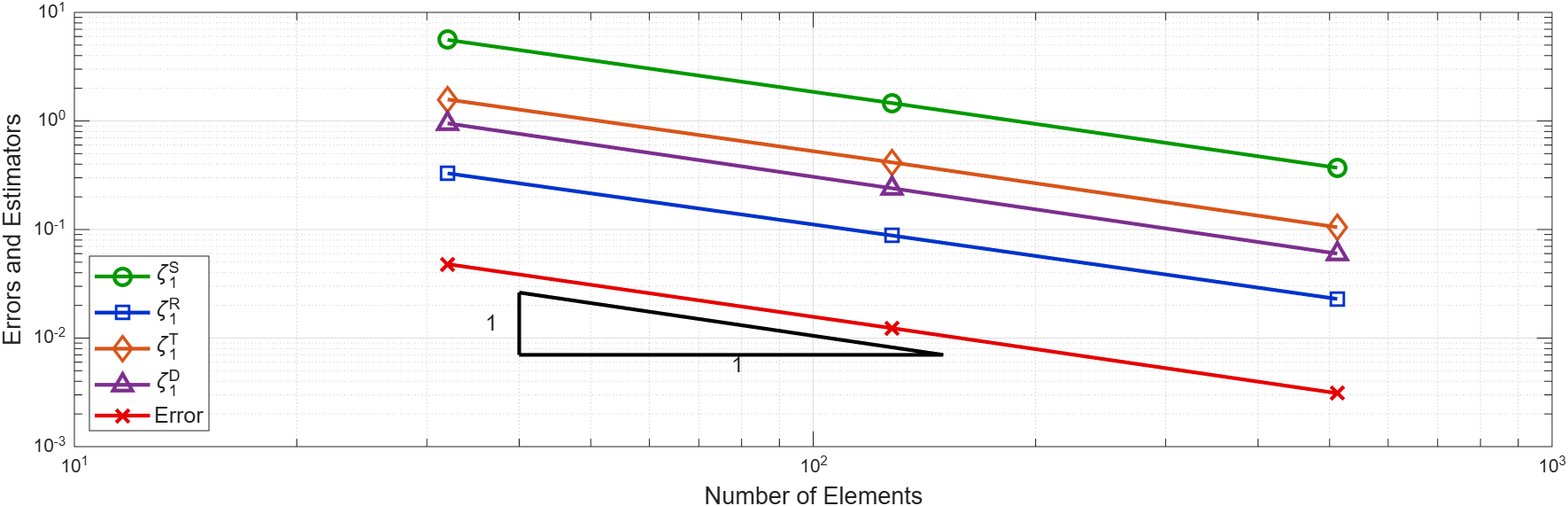}
\end{center}
\caption{\footnotesize\textsl{The estimators $\zeta_1^\mathrm{S}$ (green), $\zeta_1^\mathrm{R}$ (blue), $\zeta_1^\mathrm{T}$ (orange), and $\zeta_1^\mathrm{D}$ (purple) are plotted against the number of elements on a log--log scale, along with the true error (red). The black reference triangle with slope $-1$ indicates second-order spatial convergence in 2D. The coupling $\tau = c h^{2}$ with $c=0.16$ ensures optimal decay of error and estimators.
}}
\label{BE_Err_Est}%
\end{figure}
\subsection*{Random case}
We now consider the random case, using a sample size of 100 and the standard MC method to estimate the true error in \eqref{numerr:ch:4}. The reference solution is computed with $h_{\text{ref}} = 1/256$ and $\tau_{\text{ref}} = 2^{-9}$. We focus on the behavior of the error and the stochastic estimator $\zeta_1^\mathrm{St}$ to illustrate the first-order convergence in $\epsilon$, where
\[
(\zeta_{1}^\mathrm{St})^{2} = \sum_{n=1}^{N} \int_{t_{n-1}}^{t_n} (\zeta_{1,n}^\mathrm{St})^2 \, dt,
\]
with $\zeta_{1,n}^\mathrm{St}$ defined in \eqref{1eta-St}.

The first-order results for the error corresponding to $h=0.25$, $0.125$, and $0.0625$ for decreasing $\epsilon$ are shown in Figure~\ref{Stoch_err_first}. We observe that an appropriate balance between the parameters $h$, $\tau$, and $\epsilon$ is required to clearly observe the convergence behavior. Nevertheless, the ratio of errors between successive runs increases as $\epsilon$ decreases, indicating an improvement in convergence behavior.
\begin{figure}[H]
    \centering
    \begin{subfigure}{0.47\textwidth}
        \centering
        \includegraphics[width=\textwidth]{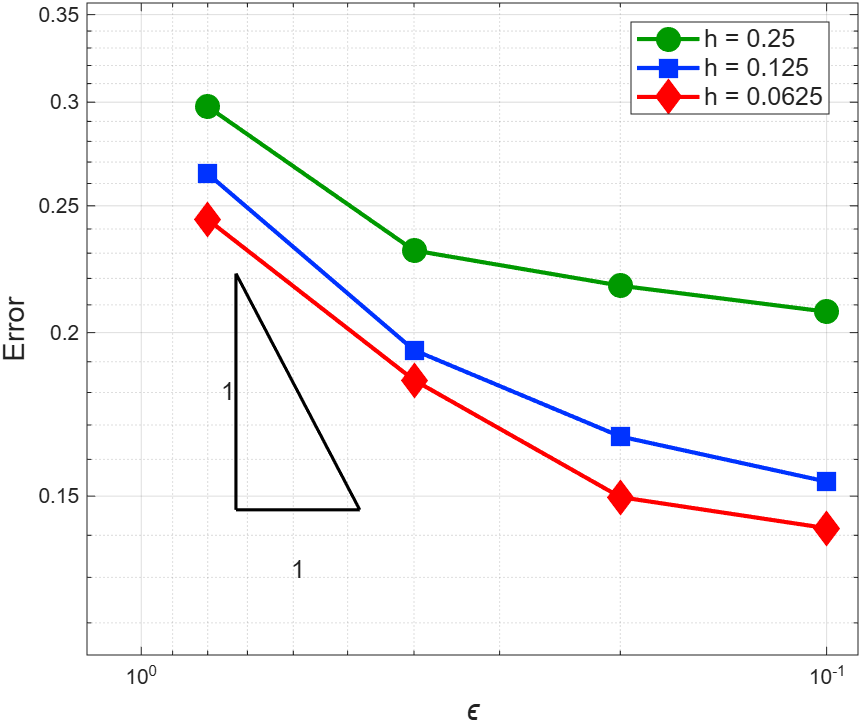}
        \caption{\sl \footnotesize Illustration of error behavior w.r.t. $\epsilon$ for  fixed 
        $h=0.25$, $0.125$, and $0.0625$ under the coupling $h\propto\tau^{2}$. }
        \label{Stoch_err_first}
    \end{subfigure}
    \hfill
    \begin{subfigure}{0.5\textwidth}
        \centering
\includegraphics[width=\textwidth]{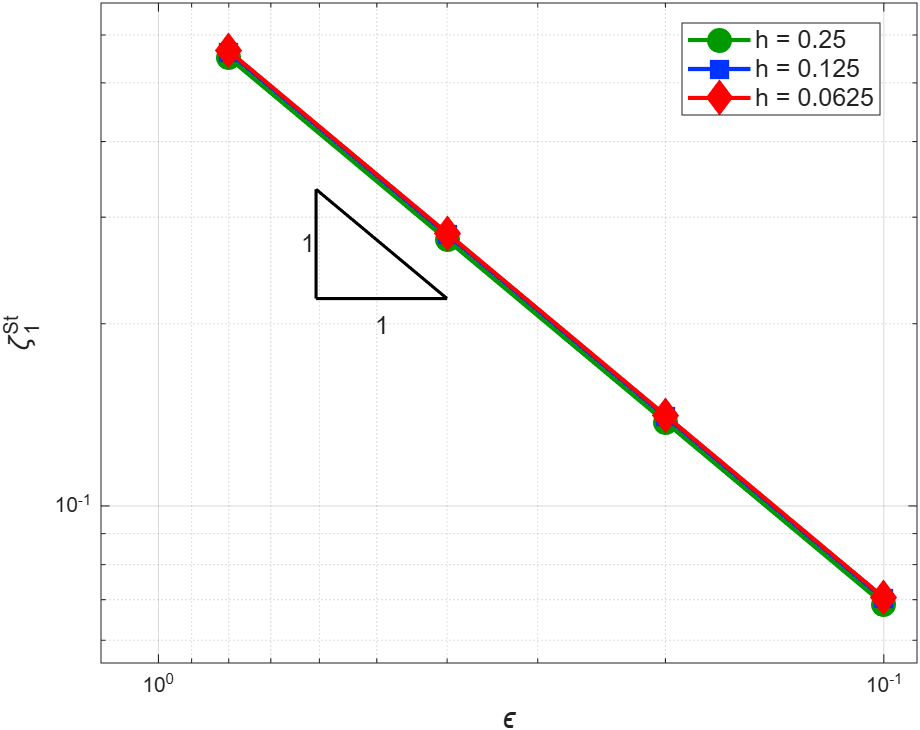}
\caption{\sl \footnotesize Optimal behavior of stochastic estimator $\zeta_{1}^{\mathrm{st}}$   w.r.t. $\epsilon$ for  fixed 
        $h=0.25$, $0.125$, and $0.0625$ under the coupling $h\propto\tau^{2}$.}
        \label{Stoch_esti_first}
    \end{subfigure}
\caption{\footnotesize \textsl{The true error and stochastic estimator $\zeta_1^\mathrm{St}$ 
are plotted against the $\epsilon$ on a log--log scale, 
for $h=0.25$ (green), $0.125$ (blue), and $0.0625$ (red). The black reference triangle with slope $1$ indicates expected first-order convergence in $\epsilon$. 
}}
\label{Stoch_err_esti_first}
\end{figure}
Figure~\ref{Stoch_esti_first} illustrates the behavior of the estimator $\zeta_{1}^{\mathrm{st}}$ with respect to the perturbation parameter $\epsilon$ for fixed mesh sizes $h=0.25$, $0.125$, and $0.0625$. As $\epsilon$ decreases, the estimator exhibits an approximately linear decay in the log--log scale, as indicated by the reference triangle of slope one. This confirms the expected first-order convergence with respect to $\epsilon$. Moreover, the curves for different mesh sizes follow the same trend, with smaller magnitudes observed for finer meshes, demonstrating consistent behavior with mesh refinement.
\begin{figure}[H]
    \centering
    \begin{subfigure}{0.48\textwidth}
        \centering
        \includegraphics[width=\textwidth]{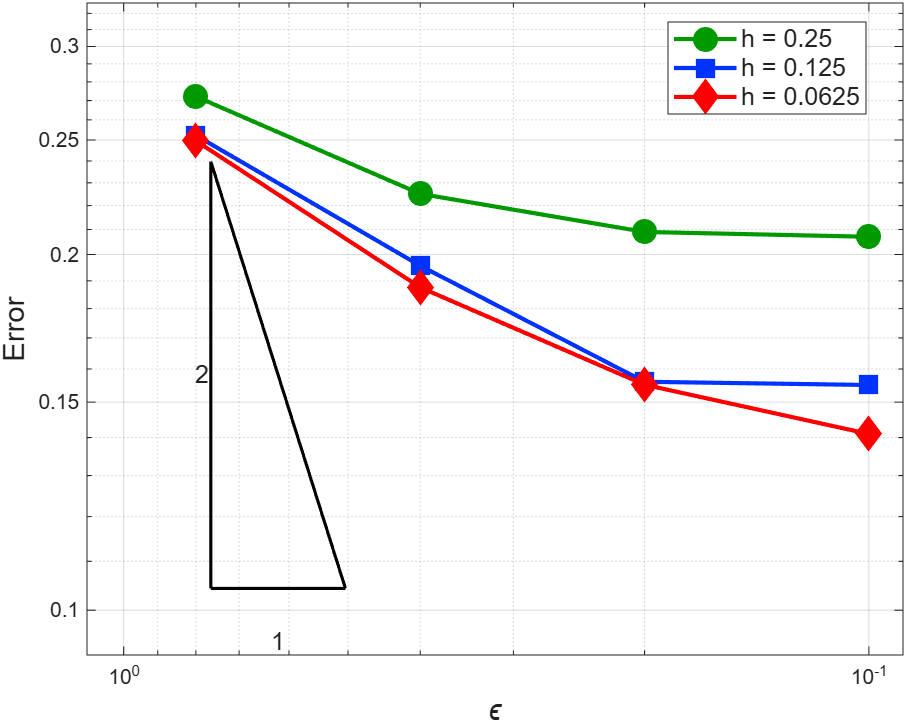}
        \caption{Illustration of error behavior with respect to $\epsilon$ for  fixed 
        $h=0.25$, $0.125$, and $0.0625$ under the coupling $h\propto\tau^{2}$.}
        \label{Stoch_err_sec}
    \end{subfigure}
    \hfill
    \begin{subfigure}{0.48\textwidth}
        \centering
\includegraphics[width=\textwidth]{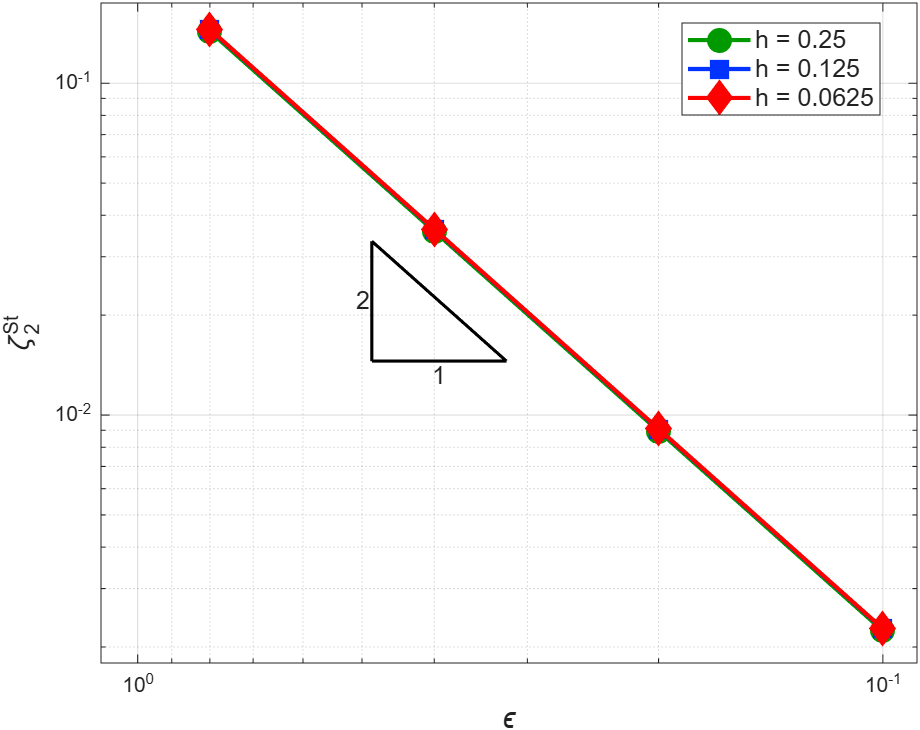}
\caption{\sl \small Optimal behavior of the stochastic estimator $\zeta_{2}^{\mathrm{st}}$  with respect to $\epsilon$ for  fixed 
        $h=0.25$, $0.125$, and $0.0625$ under the coupling $h\propto\tau^{2}$.}
        \label{Stoch_esti_sec}
    \end{subfigure}
\caption{\footnotesize \textsl{The true error and estimator $\zeta_2^\mathrm{St}$ 
are plotted against the $\epsilon$ on a log--log scale, 
for $h=0.25$ (green), $0.125$ (blue), and $0.0625$ (red). The black reference triangle with slope $2$ indicates second-order convergence in $\epsilon$. It shows the optimal behavior of $\zeta_2^\mathrm{St}$  with respect to $\epsilon$.}}
\label{Stoch_err_esti_sec}
\end{figure}
To illustrate the behavior of $O(\epsilon^{2})$, we also consider the stochastic estimator
\begin{equation*}
(\zeta_{2}^{\mathrm{st}})^{2}
=
\sum_{n=1}^{N}\int_{t_{n-1}}^{t_{n}}
(\zeta_{2,n}^{\mathrm{st}})^{2}\,dt ,
\end{equation*}
where $\zeta_{2,n}^{\mathrm{st}}$ is defined in \eqref{zeta-St-sec}.

The corresponding second-order results in $\epsilon$ are presented in Figure~\ref{Stoch_err_esti_sec}, which shows the behavior of the error and the estimator $\zeta_{2}^{\mathrm{st}}$ for fixed mesh sizes $h=0.25$, $0.125$, and $0.0625$. As $\epsilon$ decreases, the estimator exhibits a quadratic decay in the log--log scale, confirming the expected second-order convergence with respect to $\epsilon$. However, as previously observed, an appropriate balance among the parameters $h$, $\tau$, and $\epsilon$ is required to clearly observe the convergence behavior of the error. To conclude, the \emph{a posteriori}
error estimators obtained in this article are robust with respect to $\epsilon$, $h$ and $h$.

\section{Conclusions}\label{sec:6}
This study presents residual-based \emph{a posteriori} error estimates in the $L^2_P(\Omega; L^{\infty}(0, T;L^2(D)))$-norm for a parabolic PDE with a small uncertainty in a Robin boundary condition. The discretization of the problem is based on $\mathbb{P}_1$ finite elements in space and the BE scheme in time, with the perturbation technique applied to address randomness. Estimates of the order $O(\epsilon)$ and $O(\epsilon^2)$ were obtained, meticulously isolating contributions from space, time, and stochastic components. In order to obtain the estimates, a parametric elliptic reconstruction operator is introduced. 
The use of the elliptic reconstruction technique offers a new and promising approach to achieving this, expanding its applicability to a wider range of mathematical models. Numerical experiments demonstrate the effectiveness and robustness of the estimators.  

\noindent
\subsection*{Acknowledgement.} 
The second author gratefully acknowledges the financial support from NBHM, India [Grant Number No.02011/27/2025/NBHM(R.P)/R\&D II/9829]. 
The fourth author was partially supported by
the European Union (ERC, StochMan, 101088589).

\bibliography{SPPDE.bib}

@STRING{CMAME = {Comput. Methods Appl. Mech. Engng}}

@STRING{IJHMT = {Int. J. Heat Mass Transfer}}

@STRING{JCAM = {J. Comp. Appl. Math.}}

@STRING{JCP = {J. Comput. Phys.}}

@STRING{SIAM = {SIAM J. Appl. Math.}}

@STRING{IMAJNA = {IMA J. Numer. Anal.}}

@STRING{AN = {Acta Numer.}}

@STRING{IJHMT ={Int. J. Heat Mass Trans.}}

@STRING{JCAM = {J. Comput. Appl. Math.}}

@STRING{NMPDE = {Numer. Methods Partial Differ. Equ.}}

@STRING{IJNME = {Int. J. Numer. Meth. Eng.}}

@STRING{SINUM = {SIAM J. Numer. Anal.}}

@STRING{SISC={SIAM J. Sci. Comput. }}

@STRING{MC = {Math. Comp.}}

@STRING{JOMP = {J. Sci. Comput. }}

@STRING{ACME={Arch. Comput. Methods Eng.}}

@STRING{IJNME={Int. J. Numer. Methods Eng.}}

@STRING{IJNME={Internat. J. Numer. Methods Engrg.}}

@STRING{EMMNA={ESAIM Math. Model. Numer. Anal.}}

@STRING{IJHMT={Int. J. Heat Mass Transf.}}

@STRING{EJP={Eur. J. Phys.}}

@STRING{CP={Contemp. Phys.}}

@STRING{AMBP={Ann. Math. Blaise Pascal.}}

@article{KPSZ23,
  title={Efficient adaptive stochastic collocation strategies for advection--diffusion problems with uncertain inputs},
  author={Kent, Benjamin M and Powell, Catherine E and Silvester, David J and Zimo{\'n}, Ma{\l}gorzata J},
  journal=JOMP,
  volume={96},
  number={3},
  pages={64},
  year={2023},
  publisher={Springer}
}

@article{SRM25,
  title={A Posteriori error estimates for the {C}rank--{N}icolson method: {A}pplication to Parabolic Partial Differential Equations Subject to a {R}obin Boundary Condition with Small Randomness},
  author={Shravani, N and Reddy, G. M. M. and Vynnycky, M},
  journal=JOMP,
  volume={104},
  number={1},
  year={2025},
  publisher={Springer US New York}
}

@article{SZ90,
  title={Finite element interpolation of nonsmooth functions satisfying boundary conditions},
  author={Scott, L Ridgway and Zhang, Shangyou},
  journal=MC,
  volume={54},
  number={190},
  pages={483--493},
  year={1990}
}

@article{MP22,
  title={A posteriori error estimation and space-time adaptivity for a linear stochastic PDE with additive noise},
  author={Majee, A. K. and Prohl, A.},
  journal= IMAJNA,
  volume={42},
  number={2},
  pages={1526--1567},
  year={2022},
  publisher={Oxford University Press}
}

@incollection{NV21,
  title={A posteriori error estimation for the stochastic collocation finite element approximation of the heat equation with random coefficients},
  author={Nobile, F. and Vidli{\v{c}}kov{\'a}, E.},
  booktitle={Sparse Grids and Applications-Munich 2018},
  pages={127--159},
  year={2021},
  publisher={Springer}
}

@article{CPB19,
  title={Efficient adaptive multilevel stochastic {G}alerkin approximation using implicit a posteriori error estimation},
  author={Crowder, A. J. and Powell, C. E. and Bespalov, A.},
  journal=SISC,
  volume={41},
  number={3},
  pages={A1681--A1705},
  year={2019},
  publisher={SIAM}
}

@article{BPRR19,
  title={Convergence of adaptive stochastic {G}alerkin {F}{E}{M}},
  author={Bespalov, A. and Praetorius, D. and Rocchi, L. and Ruggeri, M.},
  journal=SINUM,
  volume={57},
  number={5},
  pages={2359--2382},
  year={2019},
  publisher={SIAM}
}

@article{EGSZ14,
  title={Adaptive stochastic {G}alerkin {FEM}},
  author={Eigel, M. and Gittelson, C. J. and Schwab, C. and Zander, E.},
  journal=CMAME,
  volume={270},
  pages={247--269},
  year={2014},
  publisher={Elsevier}
}

@article{GSRJ16,
  title={A posteriori error analysis of two-step backward differentiation formula finite element approximation for parabolic interface problems},
  author={Gupta, J. S. and Sinha, R. K. and Reddy, G. M. M. and Jain, J.},
  journal=JOMP,
  volume={69},
  number={1},
  pages={406--429},
  year={2016},
  publisher={Springer}
}

@inproceedings{V96,
  title={A review of a posteriori error estimation},
  author={Verf{\"u}rth, R.},
  booktitle={Adaptive Mesh-Refinement Techniques, Wiley \& Teubner},
  year={1996},
  organization={Citeseer}
}

@article{KL13,
  title={Maximum norm a posteriori error estimation for parabolic problems using elliptic reconstructions},
  author={Kopteva, N. and Linss, T.},
  journal=SINUM,
  volume={51},
  number={3},
  pages={1494--1524},
  year={2013},
  publisher={SIAM}
}

@article{D12,
title = {Newton’s law of cooling and its interpretation},
journal = IJHMT,
volume = {55},
number = {21},
pages = {5397-5402},
year = {2012},
issn = {0017-9310},
doi = {https://doi.org/10.1016/j.ijheatmasstransfer.2012.03.035},
url = {https://www.sciencedirect.com/science/article/pii/S0017931012001846},
author = {M. I. Davidzon},
}

@article {GA98,
    AUTHOR = {Gustafson, K. and Abe, T.},
     TITLE = {The third boundary condition---was it {R}obin's?},
   JOURNAL = {Math. Intelligencer},
  FJOURNAL = {The Mathematical Intelligencer},
    VOLUME = {20},
      YEAR = {1998},
    NUMBER = {1},
     PAGES = {63--71},
      ISSN = {0343-6993},
   MRCLASS = {01A55 (35-03)},
  MRNUMBER = {1601764},
MRREVIEWER = {Thomas Archibald},
       DOI = {10.1007/BF03024402},
       URL = {https://doi.org/10.1007/BF03024402},
}

@article{LM06,
  title={Elliptic reconstruction and a posteriori error estimates for fully discrete linear parabolic problems},
  author={Lakkis, O. and Makridakis, C.},
  journal=MC,
  volume={75},
  number={256},
  pages={1627--1658},
  year={2006}
}

@article{BKM12,
  title={A posteriori error control for fully discrete Crank--Nicolson schemes},
  author={B{\"a}nsch, E. and Karakatsani, F. and Makridakis, C.},
  journal=SINUM,
  volume={50},
  number={6},
  pages={2845--2872},
  year={2012},
  publisher={SIAM}
}

@article{GNP16,
  title={A posteriori error estimation for elliptic partial differential equations with small uncertainties},
  author={Guignard, D. and Nobile, F. and Picasso, M.},
  journal=NMPDE,
  volume={32},
  number={1},
  pages={175--212},
  year={2016},
  publisher={Wiley Online Library}
}

@article{GWZ14,
  title={Stochastic finite element methods for partial differential equations with random input data},
  author={Gunzburger, M. D. and Webster, C. G. and Zhang, G.},
  journal=AN,
  volume={23},
  pages={521--650},
  year={2014},
  publisher={Cambridge University Press}
}

@article{MN03,
  title={Elliptic reconstruction and a posteriori error estimates for parabolic problems},
  author={Makridakis, C. and Nochetto, R. H.},
  journal=SINUM,
  volume={41},
  number={4},
  pages={1585--1594},
  year={2003},
  publisher={SIAM}
}

@article{G19,
  title={Partial differential equations with random input data: A perturbation approach},
  author={Guignard, D.},
  journal=ACME,
  volume={26},
  number={5},
  pages={1313--1377},
  year={2019},
  publisher={Springer}
}

@article{ZL04,
  title={An efficient, high-order perturbation approach for flow in random porous media via {K}arhunen--{L}o\`eve and polynomial expansions},
  author={Zhang, D. and Lu, Z.},
  journal=JCP,
  volume={194},
  number={2},
  pages={773--794},
  year={2004},
  publisher={Elsevier}
}

@article{AO97,
  title={A posteriori error estimation in finite element analysis},
  author={Ainsworth, M. and Oden, J. T.},
  journal=CMAME,
  volume={142},
  number={1-2},
  pages={1--88},
  year={1997},
  publisher={Elsevier}
}

@article{W99,
  title={Newton's law of cooling},
  author={Winterton, R. H. S. },
  journal= CP,
  volume={40},
  number={3},
  pages={205--212},
  year={1999},
  publisher={Taylor \& Francis}
}

@techreport{J10,
  title={Mod{\'e}lisation, analyse math{\'e}matique et simulation num{\'e}rique de la dynamique des glaciers},
  author={Jouvet, G.},
  year={2010},
  institution={EPFL}
}

@article{PRR04,
  title={Numerical simulation of the motion of a two-dimensional glacier},
  author={Picasso, M. and Rappaz, J. and Reist, A. and Funk, M. and Blatter, H.},
  journal=IJNME,
  volume={60},
  number={5},
  pages={995--1009},
  year={2004},
  publisher={Wiley Online Library}
}

@article{PRR08,
  title={Numerical simulation of the motion of a three-dimensional glacier},
  author={Picasso, M. and Rappaz, J. and Reist, A.},
  journal=AMBP,
  volume={15},
  pages={1--28},
  year={2008}
}

@book{L77,
  title={Probability {T}heory {I}, 4-th Edition},
  author={Lo\`eve, M.},
  year={1977},
  publisher={Springer, New York}
}

@book{L78,
  title={Probability {T}heory {II}},
  author={Lo\`eve, M.},
  series={Graduate {T}exts in {M}athematics},
  volume={46},
  pages={15},
  year={1978},
  publisher={Springer}
}

@article{GSR18,
  title={A posteriori error analysis of the {C}rank--{N}icolson finite element method for linear parabolic interface problems: A reconstruction approach},
  author={Gupta, J. S. and Sinha, R. K. and Reddy, G. M. M. and Jain, J.},
  journal=JCAM,
  volume={340},
  pages={173--190},
  year={2018},
  publisher={Elsevier}
}

@article{BC02,
  title={On solving elliptic stochastic partial differential equations},
  author={Babu{\v{s}}ka, I. and Chatzipantelidis, P.},
  journal=CMAME,
  volume={191},
  number={37-38},
  pages={4093--4122},
  year={2002},
  publisher={Elsevier}
}

@book{GS03,
  title={Stochastic {F}inite {E}lements: {A} {S}pectral {A}pproach},
  author={Ghanem, R. G. and Spanos, P. D.},
  year={2003},
  publisher={Courier Corporation}
}

@article{B11,
  title={The cooling law and the search for a good temperature scale, from Newton to Dalton},
  author={Besson, U.},
  journal=EJP,
  volume={32},
  number={2},
  pages={343},
  year={2011},
  publisher={IOP Publishing}
}

@article{W01,
  title={Early study of heat transfer: {N}ewton and {F}ourier},
  author={Winterton, R. H. S.},
  journal={Heat Transf. Eng},
  volume={22},
  pages={3--11},
  year={2001}
}

@article{EGSZ15,
  title={A convergent adaptive stochastic {G}alerkin finite element method with quasi-optimal spatial meshes},
  author={Eigel, M. and Gittelson, C. J. and Schwab, C. and Zander, E.},
  journal=EMMNA,
  volume={49},
  number={5},
  pages={1367--1398},
  year={2015},
  publisher={EDP Sciences}
}

@article{DKLS16,
  title={Multilevel higher order {QMC} {P}etrov--{G}alerkin discretization for affine parametric operator equations},
  author={Dick, J. and Kuo, F. Y. and Le Gia, Q. T. and Schwab, C.},
  journal=SINUM,
  volume={54},
  number={4},
  pages={2541--2568},
  year={2016},
  publisher={SIAM}
}

@book{W08,
  title={Karhunen-{L}o\`eve {E}xpansions and {T}heir {A}pplications},
  author={Wang, Limin},
  year={2008},
  publisher={London School of Economics and Political Science (United Kingdom)}
}

@article{SGR89,
  title={Stochastic finite element expansion for random media},
  author={Spanos, Pol D and Ghanem, Roger},
  journal={Journal of {E}ngineering {M}echanics},
  volume={115},
  number={5},
  pages={1035--1053},
  year={1989},
  publisher={American Society of Civil Engineers}
}
\bibliographystyle{abbrv}
\section*{Appendix}
Here, we recall some standard interpolation results from \cite{LM06,SZ90}.
Let $\Pi^n \colon H^1(D) \rightarrow V^n$ denote the Cl\'ement-type interpolation operator. For a finite element space of polynomial degree $l$, the following interpolation estimates hold for all $j \leq l+1 \colon$
\begin{align}
   &\|h_n^{-j}( v - \Pi^n v)\| \leq C_{\Pi,1} |v|_j,\;\; \label{I1}\\
   &\| h_n^{\frac{1}{2}-j}(v-\Pi^nv)\|_{\Sigma_n} \leq C_{\Pi,2}  |v|_j .\;\;  \label{I2}
\end{align}
These inequalities are valid for sufficiently smooth functions $v$, where the constants $C_{\Pi,1}$ and $C_{\Pi,2}$ depend solely on the shape-regularity of the underlying family of triangulations.

In addition, associated with the finest common coarsening of $\mathcal{T}_n$ and $\mathcal{T}_{n-1}$, $\hat{\mathcal{T}}:=\mathcal{T}_n \wedge \mathcal{T}_{n-1}$, we introduce the Scott--Zhang interpolator $\hat{\Pi}^n$, with mesh size given by $\hat{h}_n:=\max(h_n, h_{n-1})$. For this operator, the following estimate holds:
\begin{align}
    \|\hat{h}_n^{\frac{1}{2}-j}(v-\hat{\Pi}^nv)\|_{{\Sigma_n \cup \Sigma_{n-1}} \setminus {\Sigma_n \cap \Sigma_{n-1}}} \leq C_{\Pi,3}  |v|_j,\;\;  
\end{align}
where the constant $C_{\Pi,3}$ depends on the shape-regularity of the triangulations as well as the number of mesh refinements.

\end{document}